\newcommand{\ii}{\mathrm{i}}
\newcommand{\C}{\mathbb{C}}
\newcommand{\R}{\mathbb{R}}
\newcommand{\su}{\mathfrak{su}}
\newcommand{\tr}{\operatorname{Tr}}
\newcommand{\g}{\mathfrak{g}}
\DeclareMathOperator{\End}{End}
\DeclareMathOperator{\Aut}{Aut}
\DeclareMathOperator{\Hom}{Hom}
\newcommand{\ad}{\operatorname{ad}}
\DeclareMathOperator{\coker}{coker}
\DeclareMathOperator{\cl}{cl}
\newtheorem{theorem}{Theorem}
\newtheorem{proposition}[theorem]{Proposition}
\newtheorem{lemma}[theorem]{Lemma}
\newtheorem{corollary}[theorem]{Corollary}
\newtheorem{conjecture}[theorem]{Conjecture}
\theoremstyle{definition}
\newtheorem{remark}[theorem]{Remark}
\title{$L^2$ geometry of hyperbolic monopoles}
\date{28th July 2025}
\author{Guido Franchetti${}^1$ and Derek Harland${}^2$
\medskip\\
{\normalsize ${}^1$Dipartimento di Matematica, University of Pisa, Italy}\\
{\normalsize guido.franchetti@unipi.it}
\medskip\\
{\normalsize ${}^2$School of Mathematics, University of Leeds, UK}\\
{\normalsize d.g.harland@leeds.ac.uk}
}
\begin{document}
\maketitle

\abstract{It is well-known that the $L^2$ metric on the moduli space of hyperbolic monopoles, defined using the Coulomb gauge-fixing condition, diverges.  This article shows that an alternative gauge-fixing condition inspired by supersymmetry cures this divergence.  The resulting geometry is a hyperbolic analogue of the hyperk\"ahler geometry of euclidean monopole moduli spaces.}

\section{Introduction}

\subsection{Background and motivation}
An $SU(2)$ monopole consists of an $SU(2)$ connection $A$ and an $\mathfrak{su}(2)$-valued section $\Phi$ over a riemannian 3-manifold solving the BPS equation
\begin{equation}\label{BPSintro}
d^A\Phi = \ast F^A
\end{equation}
and suitable boundary conditions.  The space of all solutions modulo gauge transformations is called the moduli space, and metrics on moduli spaces play a central role in the study of monopoles.  The $L^2$ metric on the moduli space of monopoles in euclidean space is defined by the integral
\begin{equation}\label{metricintro}
g = -\frac12\int \tr(a\wedge\ast a+\phi\wedge\ast\phi),
\end{equation}
in which $(a,\phi)$ are a Lie algebra-valued 1-form and 0-form which represent a tangent vector to the moduli space and which solve the linearisation of the BPS equation \eqref{BPSintro}.  A given tangent vector to the moduli space has many gauge-equivalent representatives $(a,\phi)$, so to make the metric \eqref{metricintro} well-defined one must impose a gauge-fixing condition.

For monopoles on euclidean space, the gauge-fixing condition
\begin{equation}\label{coulombintro}
\ast d^A\ast a + [\Phi,\phi]=0
\end{equation}
admits a unique solution and leads to a well-defined metric on the moduli space.  This metric was introduced by Manton \cite{manton:1982}, who proposed that the geodesics of this metric should approximate the dynamics of slowly-moving monopoles.  This conjecture was later confirmed by Stuart \cite{stuart:1994}.

The metric on the moduli space of euclidean monopoles is hyperk\"ahler, and this fact allowed Atiyah--Hitchin to calculate the metric on the moduli space of centred 2-monopoles explicitly \cite{atiyah:2014}.  The Atiyah--Hitchin metric is now one of the best-known examples of a hyperk\"ahler manifold.  Many other prominent examples of hyperk\"ahler metrics arise as moduli spaces of monopoles \cite{Lee:1996,cherkis:1999,cherkis:2002,cherkis:2012}.

The moduli space metric can be used to study not only classical but also quantum mechanics of monopoles \cite{gibbons:1986}.  Famously, Sen \cite{sen:1994} made predictions for the $L^2$ cohomology of centred monopole moduli spaces, and these conjectures have been confirmed for charges 2 \cite{hitchin:2000} and 3 \cite{kottke:2024}.

Monopoles on hyperbolic space share much mathematical structure with their euclidean counterparts, including correspondences with spectral curves, rational maps, and (discrete) Nahm equations, so it would be natural to expect their moduli spaces to admit interesting metrics.  However, it has been known for a long time that the gauge-fixing condition \eqref{coulombintro} leads to the integral \eqref{metricintro} being divergent \cite{braam:1989}.  Thus the construction of an $L^2$-metric on the moduli space of hyperbolic monopoles is a long-standing open problem.

In view of the difficulty with the $L^2$ metric, many alternative constructions of metrics and other geometric structures on hyperbolic monopole moduli spaces have been proposed.  Hitchin \cite{hitchin:1996} constructed an Einstein metric on the moduli space of inversion-symmetric charge 2 monopoles using twistor theory.  Unfortunately, it has proven difficult to generalise this approach to higher charges \cite{hitchin:2008}, and it is not clear what relevance this metric has to monopole dynamics.  More recently, Bielawski--Schwachhofer \cite{bielawski:2013,bielawski:2013b} introduced pluricomplex structures on the moduli spaces, building on twistor theory developed by Nash \cite{nash:2007}.  Figueroa-O'Farrill--Gharamti \cite{figueroa-ofarrill:2014} recovered these pluricomplex structures starting from an analysis of the supersymmetry of hyperbolic monopoles.  As with Hitchin's work, the relevance of Bielawski--Schwachhofer's approach to the classical dynamics of monopoles is unclear.

Gibbons--Warnick \cite{gibbons:2007} and Franchetti--Ross \cite{franchetti:2023aa} analysed the dynamics of well-separated monopoles, starting from a lagrangian for charged point particles.  Interestingly, they showed that the dynamics were governed by the LeBrun metrics, which had been studied much earlier in an entirely different context \cite{lebrun:1991a}.  Franchetti--Schroers \cite{franchetti:2017} calculated the $L ^2 $ metric for a circle-invariant 1-instanton on $S ^4  $, obtaining a finite metric on the moduli space of hyperbolic 1-monopoles which is however  invariant only under a rotational subgroup of the $H ^3 $ isometry group.   Finally, Sutcliffe \cite{sutcliffe:2022a} proposed a way to renormalise the divergent integral that defines the $L^2$ metric on the hyperbolic monopole moduli space.  His boundary metric, which had been considered earlier by Braam-Austin \cite{braam:1990}, is related to  the $L^2$ metric on the space of abelian connections on $S^2$.  This metric was calculated explicitly in the case of inversion-symmetric charge 2 monopoles \cite{sutcliffe:2022}.  The asymptotic geometry of the boundary metric is very different from the metric of Franchetti--Ross \cite{franchetti:2023aa,galvin:2024}, so it is difficult to reconcile Sutcliffe's metric with other approaches to monopole dynamics.  It is also not clear that the boundary metric admits any analogue of the hyperk\"ahler property.

\subsection{Main results}
This article introduces a new approach to the geometry of hyperbolic monopoles.  The starting point is a gauge-fixing condition introduced by Figueroa-O'Farrill--Gharamti \cite{figueroa-ofarrill:2014}.  On hyperbolic space with sectional curvature $-s^2$ it takes the form
\begin{equation}\label{GFintro}
\ast d^A \ast a + [\Phi,\phi]\pm {2i}{s}\phi=0.
\end{equation}
In this equation $a$ and $\phi$ are a Lie-algebra valued 1-form and 0-form describing a tangent vector to the moduli space.  In the limit $s=0$ it reduces to the gauge-fixing condition \eqref{coulombintro} used to define the hyperk\"ahler metric for euclidean monopoles.  This gauge-fixing condition was motivated by supersymmetry: it allowed the bosonic and fermionic zero-modes of the monopole to be related by a supersymmetry transformation.

Our main result is
\begin{theorem}\label{introthm1}
Let $(A,\Phi)$ be a $SU(2)$ hyperbolic monopole with half-integer mass.  Then every tangent vector to the framed moduli space can be represented by a pair $(a_\pm,\phi_\pm)$ that solves \eqref{GFintro}.  This representative is unique.  Moreover, its $L^2$ integral \eqref{metric integral} is finite and defines symmetric bilinear $\mathbb{C}$-valued pairings $g_\pm$ on the tangent space to the framed moduli space.
\end{theorem}
This result solves the problem of the divergent $L^2$ metric.  We also prove an analogous result for the unframed moduli space, see Corollary \ref{cor:unframed gauge fixing} below.  The assumption of half-integer mass means that $\|\Phi(x)\|$ approaches some constant $p\in\frac12\mathbb{N}$ as $x$ approaches the boundary of hyperbolic space.  The bilinear pairings $g_\pm$ in this theorem turn out to be complex conjugates of each other: $g_-=\overline{g_+}$.

The gauge-fixing condition \eqref{GFintro} is unusual in that it explicitly involves the complex number $i$. As a consequence, the solution $(a,\phi)$ take values in the Lie algebra of the complexification of the gauge group.  This is why the bilinear forms $g_\pm$ constructed in Theorem \ref{introthm1} are a priori $\C$-valued, rather than $\R$-valued.  We nevertheless show that $g_\pm$ are real in two special situations.

The first situation is where we restrict to the moduli space of monopoles of charge 1.  With our conventions this moduli space is diffeomorphic to $\R\times H^3$.  In section \ref{sec:calculating} we find some solutions of the gauge-fixing condition \eqref{GFintro}, valid for any charge $n$ and any mass parameter $p$, and hence calculate $g_\pm(X,X)$ explicitly for particular choices of $X$.  A special case of these calculations shows that, for $n=1$, $g_\pm$ is a constant real multiple of the product metric on $\R\times H^3$.

The second situation is where we restrict $g_\pm$ to the submanifold of the moduli space consisting of monopoles invariant under an inversion of hyperbolic space:
\begin{proposition}\label{introprop2}
$g_+ =g _- $ defines a real symmetric bilinear form on the moduli space of inversion-symmetric hyperbolic monopoles.
\end{proposition}
In particular, the hyperbolic analogue of the Atiyah--Hitchin metric is real.

Our second result concerns the structure of the tangent bundle of the moduli space:
\begin{theorem}\label{introthm3}
The complexified tangent bundle $T^\mathbb{C}M^f_{n,p}$ of the framed moduli space $M^f_{n,p}$ of hyperbolic monopoles with mass parameter $p\in\frac12\mathbb{N}$ and charge $n\in\mathbb{N}$ admits a natural decomposition
\begin{equation}
\label{decomposition intro}
T^\mathbb{C}M^f_{n,p} \cong E^\pm \otimes K^\pm,
\end{equation}
in which $K^\pm$ is a complex 2-dimensional vector space and $E^\pm\to M^f_{n,p}$ is a rank $2n$ complex vector bundle.
\end{theorem}
The bundles $E^\pm$ in \eqref{decomposition intro} are index bundles: their fibres are eigenspaces of a Dirac operator $D^A-\ad(\Phi)$ with eigenvalues $\pm\frac{i}{2}$.  The vector spaces $K^\pm$ are the spaces of Killing spinors on $H^3$.  There is a natural $L^2$ skew-symmetric pairing $\omega_{E^\pm}$ on the fibres of $E^\pm$, defined using the symplectic pairing on the spinor bundle of $H^3$, and also a natural symplectic pairing $\omega_{K^\pm}$ on $K^\pm$.

The decomposition \eqref{decomposition intro} induces a family of almost complex structures parametrised by lines in $K^\pm$.  For a given line $\langle\psi\rangle$ spanned by $\psi\in K^\pm$, the almost complex structure $J_{\langle\psi\rangle}$ is such that
\begin{equation}
T^{1,0}_{\langle\psi\rangle}M^f_{n,p} = E^+\otimes \langle\psi\rangle.
\end{equation}
Our third main result concerns the compatibility of these structures with $g^\pm$:
\begin{theorem}\label{introthm4}
The bilinear forms $g_\pm$, $\omega_{E^\pm}$, $\omega_{K^\pm}$ are related as follows:
\begin{equation}
g_\pm = \frac{1}{2}\omega_{E^\pm}\otimes \omega_{K^\pm}.
\end{equation}
The bilinear form $g_+$ and almost complex structure $J_{\langle\psi\rangle}$ are compatible in the sense that
\begin{equation}
g_+(X,X)=0\quad\forall X\in T^{1,0}_{\langle\psi\rangle}M^f_{n,p}.
\end{equation}
\end{theorem}

A decomposition similar to \eqref{decomposition intro} was obtained by Nash \cite{nash:2007} using less direct methods.  Rather than working directly with monopoles, Nash worked with spectral curves, which are related to monopoles via a twistor corresondence.  Nash constructed symplectic pairings on his analogues of $E^\pm$, while Bielawski--Schwachhofer \cite{bielawski:2013b,bielawski:2013} studied the families of complex structures derived from Nash's decomposition.
\begin{conjecture}
The decomposition \eqref{decomposition intro}, skew-symmetric pairings $\omega_{E^\pm}$ and almost complex structures $J_{\langle\psi\rangle}$ coincide with those of Nash and Bielawski--Schwachhofer.
\end{conjecture}
If true, this conjecture would imply that the $L^2$ metric $g_\pm$ is nondegenerate and that the almost complex structures $J_{\langle\psi\rangle}$ are integrable.  But the conjecture is not straightforward to prove, because the passage from monopoles to spectral curves is complicated (we comment on this more in Appendix \ref{appendix}).

Throughout this article we assume that the mass parameter $p\in\R_{>0}$ of the monopole is a half-integer.  This has two benefits.  First, it provides natural boundary conditions for \eqref{GFintro} by identifying hyperbolic monopoles with circle-invariant instantons on $S^4$, following \cite{atiyah:1987}.  Second, it allows for streamlined analytic proofs that exploit the compactness of $S^4$.  Relaxing the assumption $2p\in\mathbb{N}$ would result in lengthier proofs which we leave for future work.

\subsection{Outline}

This article is organised as follows.  Section \ref{sec:metric} defines the hyperbolic monopole moduli spaces and gives a more precise statement of Theorem \ref{introthm1} (see Theorem \ref{thm:framed gauge fixing}).  It also proves Proposition \ref{introprop2}.  The bulk of the proof of Theorem \ref{introthm1} is contained in section \ref{sec:existence}.  Section \ref{sec:Killing} proves Theorems \ref{introthm3} and \ref{introthm4} (see Theorem \ref{thm:decomposition} and Propositions \ref{prop:g J} and \ref{prop:g omega}).  It also completes the proof of Theorem \ref{introthm1} in Proposition \ref{prop:uniqueness}.  Section \ref{sec:calculating} contains some explicit calculations of $g_\pm$ in particular settings.  Finally, Appendix \ref{appendix} compares our new definition of the framed moduli space with others that appear in the literature, and compares the almost complex structures $J_{\langle\psi\rangle}$ with the work of Bielawski--Schwachhofer \cite{bielawski:2013b,bielawski:2013}.

\section{The monopole metric}
\label{sec:metric}

\subsection{Preliminaries}
A hyperbolic monopole is a pair $(A,\Phi)$ consisting of a connection $A$ and endomorphism $\Phi$ on a trivial vector bundle $V$ over hyperbolic space $H^3$ with sectional curvature $-1$.  These are required to satisfy the Bogomolny equation,
\begin{equation}\label{bog_eq}
d^A\Phi =\ast F^A,
\end{equation}
together with some boundary conditions to be specified.  In this paper attention will be restricted to gauge group $SU(2)$, so that $V$ has rank 2, $\Phi$ is an $\su(2)$-valued function, and $A$ is an $\mathfrak{su}(2)$-valued 1-form.  In this situation the boundary conditions require that:
\begin{itemize}
\item $V$, $\Phi$ and $A$ extend to the boundary $S^2_\infty$ of $H^3$;
\item $V|_{S^2_\infty}$ decomposes as a sum of line bundles $L^\pm$ with first Chern number $c _1(L^\pm) =\pm n$ for some $n\in\mathbb{N}$;
\item the bundles $L^\pm$ are eigenbundles of $\Phi_{S^2_\infty}$ with eigenvalues $\mp ip$ for some $p>0$;
\item the connection $A|_{S^2_\infty}$ is a sum of $U(1)$ connections on these line bundles.
\end{itemize}
The $SU(2)$-structure means that $L^-$ is naturally isomorphic to the dual of $L^+$, and that the connection on $L^-$ is the dual of the connection on $L^+$.  The integer $n$ is known as the charge of the monopole, while $p$ is known as the mass parameter.

Solutions of the Bogomolny equation \eqref{bog_eq} minimise the energy,
\begin{equation}\label{energy functional}
E = -\frac{1}{4}\int_{H^3}\tr(d^A\Phi\wedge\ast d^A\Phi + F^A\wedge \ast F^A).
\end{equation}
This fact follows from a simple integration by parts argument:
\begin{align}
E &= -\frac{1}{4}\int_{H^3}\tr(d^A\Phi-\ast F^A)\wedge\ast (d^A\Phi - \ast F^A) - \frac12\int_{H^3}\tr(d^A\Phi\wedge F^A)\\
&\geq 0 - \frac12\int_{S^2_\infty}\tr(\Phi F^A)
=2\pi np.\label{bog bound}
\end{align}

We will see shortly that hyperbolic monopoles with $2p\in\mathbb{N}$ correspond to circle-invariant instantons on $S^4$ \cite{atiyah:1987}.  This is due in part to the conformal equivalence $S^1\times H^3$ and $S^4\setminus S^2$.  Let us construct this conformal equivalence explicitly, before explaining the relationship between instantons and monopoles.

We model $S^4$ as the set of unit vectors in $\R^5$.  By writing $y_1=r_1\cos\theta$, $y_2=r_1\sin\theta$ with $r_1>0$, the metric on $\mathbb{R}^5\setminus\mathbb{R}^3$, where $\mathbb{R}  ^3 \subset \mathbb{R}  ^5 $ is defined by $r_1=0$, is seen to be conformally equivalent to the metric on $S^1\times H^4$:
\begin{equation}\label{conformal S1H3}
\frac{1}{r_1^2}\sum_{\mu=1}^5 dy_\mu dy_\mu = d\theta^2 + \frac{1}{r_1^2}(dr_1^2+dy_3^2+dy_4^2+dy_5^2).
\end{equation}
The equation defining $S^4$, $r_1^2+y_3^2+y_4^2+y_5^2 =1$, determines an embedding $H^3\subset H^4$.  We thus obtain a conformal identification of $S^1\times H^3$ with $S^4\setminus S^2$. The action of the circle group on itself gives an action on $S^1\times H^3$ which in turn induces a rotation of $S^4$.  The fixed set $S^2\subset S^4$ of the circle action is naturally identified with the boundary of $H^3$.

We now explain how circle-invariant instantons give rise to hyperbolic monopoles.  Let $V\to S^4$ be a rank 2 bundle equipped with an $SU (2) $ lift of the following action of $\R$:
\begin{equation}\label{action on S^4}
t\cdot y = (y^1\cos(t)-y^2\sin(t),y^1\sin(t)+y^2\cos(t),y^3,y^4,y^5).
\end{equation}
Let $B$ be an instanton (that is, a connection with anti-self-dual curvature) on this bundle that is invariant under the action. We restrict $B$  to $S^4\setminus S^2 $ and pull it back to $\R\times H^3$ via the map
 \begin{equation}
 \mathbb{R}  \times H ^3 \rightarrow S ^1 \times H ^3 \rightarrow S ^4 \setminus S ^2 
 \end{equation} where the first map is given by $(\theta , x ) \mapsto (\mathrm{e} ^{ i\theta  },x )$ and the second one is the conformal equivalence between $S^1\times H^3$ and $S ^4 \setminus S ^2 $ discussed previously.
 This gives a connection on $\R\times H^3$ that is invariant under the action of translations.  After a gauge transformation, this connection  takes the form $\Phi\,d\theta+A$, where $(A,\Phi)$ are a connection and section over $H^3$, pulled back to $\R\times H^3$.  This connection has anti-self-dual curvature because the Hodge star is conformally invariant in middle dimension.  The anti-self-dual equation for the curvature of this connection is equivalent to the Bogomolny equation \eqref{bog_eq} for $(A,\Phi)$.  So $B$ determines a monopole provided that $(A,\Phi)$ satisfy the correct boundary conditions, which we verify in the next proposition.
\begin{proposition}\label{prop:S1 action}
Let $B$ be an irreducible anti-self-dual $SU(2)$ connection on a rank 2 bundle $V\to S^4$ that is invariant under an $SU(2)$ lift of the action \eqref{action on S^4}.  Then there exist $n,2p\in\mathbb{N}$ such that the following points hold.
\begin{enumerate}
\item The action of $2\pi\in\R$ on any fibre $V_y$ over $y\in S^4$ is multiplication by $(-1)^{2p}$. 
\item The action of $t\in\R$ on any fibre $V_y$ over a fixed point $y$ has eigenvalues $e^{\mp i p t}$.  The eigenbundles are line bundles $L^\pm$ over the fixed set $S ^2 $ such that $V|_{S^2}= L^+\oplus L^-$ and $c_1(L^\pm)=\pm n$. 
\item $B$ determines a hyperbolic monopole with charge $n$ and mass parameter $p$.
\item The second Chern number of $V$ is $2np$.
\end{enumerate}
\end{proposition}
\begin{proof}
Acting with $2\pi\in\R$ gives a section  of $\mathrm{Aut}(V)$, because  $2\pi$ fixes all points in $S^4$.  This section is parallel because the $\R$-action fixes the connection $B$.  Therefore its eigenvalues are constant.  If it had two distinct eigenvalues then the holonomy group of $B$ would be a subgroup of $U(1)$.  But this is not possible because $B$ is irreducible.  So the eigenvalues are equal.  Since the action is an $SU(2)$-action (meaning that it fixes a hermitian metric on $V$ and a non-vanishing section of $\Lambda^2V$), the determinant of the section of $\mathrm{Aut}(V)$ is 1.  So the eigenvalues are $\pm 1$ and the section is $\pm \mathrm{Id}$.

Now consider the restriction of $V$ to the fixed set $S^2$.  The action of $t\in\R$ gives an automorphism of $V|_{S^2}$.  Again, the eigenvalues are constant and their product is 1.  So they must be of the form $e^{\pm itp}$ for some $p\in\R$.  Since $t=2\pi$ acts as $\pm1$, the real number $p$ must satisfy $2p\in\mathbb{Z}$.  Without loss of generality we can assume $p\geq0$ and denote the eigenbundle for $e^{\pm itp}$ by $L^\mp$.  Let $n=c_1(L^+)\in\mathbb{Z}$.  Since $c_1(V)=0$, $c_1(L^-)=-n$.  Since the connection $B$ is invariant, its restriction to $S^2$ is a sum of connections on $L^\pm$.  The action of $t=2\pi$ on any fibre of $V|_{S^2}$ is given by multiplication with $(-1)^{2p}$, so by continuity, the same is true on any fibre of $V$.

Now we pull the bundle $V$ and connection $B$ back to $\R\times \overline{H}^3$, where $\overline{H}^3=H^3\cup S^2_\infty$.  The bundle $V|_{S^2}$ is trivial, so the pulled back bundle over $\R\times S^2_\infty$ is also trivial.  This trivialisation can be extended to a trivialisation of the bundle over $\R\times \overline{H}^3$.  The $\R$-action on this trivial bundle takes the form
\begin{equation}
t\cdot (\theta,x,v)= (\theta+t,x,h(t,\theta,x)v)\,\quad \forall (\theta,x,v)\in \R\times \overline{H}^3\times \C^2,
\end{equation}
for some $h(t,\theta,x)\in SU(2)$.  Our earlier calculation shows that $h(t,\theta,x)$ has eigenvalues $e^{\mp ipt}$ when $x\in S^2_\infty$.  We can make a gauge transformation $g$ so that the $\R$-action is trivial.  After doing so, the connection takes the form $\Phi\,d\theta+A$.  So we have that
\begin{align}
B&= g\Phi g^{-1}\,d\theta+gAg^{-1}-dg g^{-1}\label{B Phi A}\\
h(t,\theta,x) &= g(\theta+t,x)g(\theta,x)^{-1}.
\end{align}
When $x\in S^2_\infty$, the contraction of $B$ with $\partial/\partial\theta$ is zero, because $B$ was pulled back from a connection that extends across $S^2\subset S^4$.  So the equations together imply that
\begin{equation}
g\Phi g^{-1} = \frac{\partial g}{\partial\theta}g^{-1}=\frac{\partial}{\partial t}\bigg|_{t=0}h(t,\theta,x)
\end{equation}
for $x\in S^2_\infty$.  Therefore the eigenvalues of $\Phi$ over $S^2_\infty$ are the same as the eigenvalues of $\partial_th$ over $S^2\subset S^4$.  Since $h$ has eigenvalues $e^{\mp ipt}$ over $S^2$, the eigenvalues of $\Phi$ are $\mp ipt$, and the eigenspaces are identified with the eigenbundles $L^\pm$ of $h$.


By Chern-Weil theory and \eqref{bog bound},
\begin{multline}
c_2(V)=\frac{1}{8\pi^2}\int_{[0,2\pi]\times H^3}\tr(F^B\wedge F^B)=\frac{-1}{4\pi^2}\int_{[0,2\pi]\times H^3}d\theta\wedge\tr(d^A\Phi\wedge F^A)\\
=\frac{-1}{2\pi} \int_{H^3}\tr(d^A\Phi\wedge F^A)= 2np.
\end{multline}
\end{proof}
Now we describe the reverse process.  Let $(A,\Phi)$ be a hyperbolic monopole; we will attempt to construct a circle-invariant instanton.  We choose a gauge transformation $g:\R\times H^3\to SU(2)$ and define a connection $B$ on $\R\times H^3$ via \eqref{B Phi A}.  We want this connection to be periodic in $\theta$; for this it is sufficient to choose $g$ such that
\begin{equation}
g(\theta+2\pi,x)=\pm g(\theta,x).
\end{equation}
With this choice, $B$ descends to an anti-self-dual connection on $S^1\times H^3\simeq S^4\setminus S^2$.  We now attempt to extend this connection to $S^4$.  The holonomy of the connection along $S^1\times\{x\}$ is 
\begin{equation}
g(2\pi,x)^{-1}\exp(-2\pi\Phi(x))g(0,x).
\end{equation}
As $x$ approaches $S^2_\infty$ the eigenvalues of this matrix tend to $\pm e^{2\pi ip}$ and $\pm e^{-2\pi ip}$.  A theorem of Sibner--Sibner shows that the connection extends to $S^4$ if and only if the eigenvalues approach 1 \cite{sibner:1988,sibner:1992}.  In the ``$+$'' case this means that $p$ is an integer, and in the ``$-$'' case it means that $p\in (\frac12\mathbb{Z})\setminus\mathbb{Z}$.  Thus:
\begin{proposition}
A hyperbolic monopole extends to a circle-invariant instanton on $S^4$ if and only if $2p\in\mathbb{N}$.
\end{proposition}

\subsection{The framed and unframed moduli spaces}
The Bogomolny equation is invariant under gauge transformations, and it is natural to study the moduli space of solutions modulo gauge transformations.  In fact, there are two natural moduli spaces to consider, the framed and unframed moduli spaces.  We denote by $C_{n,p}$ the space of solutions of the Bogomolny equations and boundary conditions with charge $n$ and mass parameter $p$ that extend smoothly to the bundle $V\to S^4$.  We define the group of \emph{unframed gauge transformations} to be the group $G^u$ of functions $g:H^3\to SU(2)$ that extend to smooth circle-invariant sections of the bundle $\Aut(V)\to S^4$.  Note that circle-invariance demands that on the fixed $S^2\subset S^4$, $g$ maps $L^+$ to $L^+$ and $L^-$ to $L^-$.  Two monopoles are called \emph{unframed gauge equivalent} if they are related by an unframed gauge transformation.  The moduli space $M^u_{n,p}$ of \emph{unframed} monopoles is the quotient of $C_{n,p}/G^u$.  This is known to be a smooth manifold of dimension $4n-1$ \cite{braam:1990}.

To define the framed moduli space, we need to recall the definition of the Chern--Simons number.  Let $A^0,A^1$ be two connections on $H^3$ and let $\tilde{A}=(1-t)A^0+tA^1$ be a connection on $[0,1]\times H^3$.  The Chern-Simons number of $A^0,A^1$ is defined to be
\begin{equation}\label{chern-simons definition}
CS[A^0,A^1]=\int_{[ 0,1] \times H^3}\tr(F^{\tilde A}\wedge F^{\tilde A}).
\end{equation}
By explicitly evaluating the integral in $t$, one can obtain a more explicit formula for the Chern-Simons number:
\begin{multline}\label{chern-simons explicit}
CS[A^0,A^1]=\int_{H^3}\tr\left(A^1\wedge dA^1 + \frac23 A^1\wedge A^1\wedge A^1\right)\\ - \int_{H^3}\tr\left(A^0\wedge dA^0 + \frac23 A^0\wedge A^0\wedge A^0\right)+\int_{S^2_\infty}\tr(A^0\wedge A^1).
\end{multline}
\begin{lemma}\label{lemma:chern simons additivity}
Let $A^0,A^1,A^2$ be connections satisfying the boundary conditions of a monopole, and let $g:H^3\to SU(2)$ be an unframed gauge transformation.  The Chern--Simons number is gauge-invariant, in the sense that
\begin{equation}
CS[A^0,A^1]=CS[g^{-1}A^0g+g^{-1}dg,g^{-1}A^1g+g^{-1}dg].
\end{equation}
It is skew-symmetric, meaning that
\begin{equation}\label{chern-simons skew symmetry}
CS[A^1,A^0]=-CS[A^0,A^1].
\end{equation}
Moreover, if $A^0,A^1,A^2$ are unframed gauge equivalent then
\begin{equation}\label{chern-simons additivity}
CS[A^0,A^2]=CS[A^0,A^1]+CS[A^1,A^2].
\end{equation}
\end{lemma}
\begin{proof}
Gauge invariance is immediate from the definition \eqref{chern-simons definition}, as $F^{\tilde{A}}\mapsto g^{-1}F^{\tilde{A}}g$ under gauge transformations.  Skew-symmetry is immediate from \eqref{chern-simons explicit}, cyclicity of the trace, and skew-symmetry of the wedge product.  For the final result \eqref{chern-simons additivity} we first compute
\begin{equation} 
\begin{split} \label{CS additivity computation}
CS[A^0,A^1]+&CS[A^1,A^2]-CS[A^0,A^2]\\
&= \int_{S^2_\infty}\tr(A^0\wedge A^1+A^1\wedge A^2-A^0\wedge A^2)\\
&= \int_{S^2_\infty}\tr((A^1-A^0)\wedge(A^2-A^0)-A^0\wedge A^0).
\end{split} 
\end{equation}
The second of these two terms vanishes by cyclicity of the trace and skew-symmetry of the wedge product.  To analyse the first we first introduce the section $T$ of $\End(V)$ over $S^2_\infty$ such that $Tv=\pm v$ for $v\in L^\pm$.  Since the connections $A^i$ respect the splitting $V=L^+\oplus L^-$, they make $T$ parallel.  It follows that
\begin{equation}
A^j=\frac12 T^{-1}dT + T\alpha^j
\end{equation}
for 1-forms $\alpha^j$ on $L^+$.  By hypothesis,  the connections $A^1,A^2$ differ from $A^0$ by unframed gauge transformations $g^j$ with $j=1,2$.  Over $S^2_\infty$, these gauge transformations take the form $g^j=\exp(i\lambda^jT)$ for functions $\lambda^j:S^2_\infty\to\R$.  So
\begin{equation}
A^j-A^0=iTd\lambda^j
\end{equation}
over $S^2_\infty$.  It follows from \eqref{CS additivity computation} that
\begin{multline}
CS[A^0,A^1]+CS[A^1,A^2]-CS[A^0,A^2]=-2\int_{S^2_\infty}d\lambda^1\wedge d\lambda^2\\
=-2\int_{S^2_\infty}d(\lambda^1\wedge d\lambda^2)=0.
\end{multline}
%
\end{proof}

We say that two connections $(A^1,\Phi^1),(A^2,\Phi^2)$ are \emph{framed gauge equivalent} if they are unframed gauge equivalent and $CS[A^1,A^2]=0$.  It follows from the previous lemma that framed gauge equivalence is an equivalence relation.  We define the moduli space $M^f_{n,p}$ of \emph{framed monopoles} to be the quotient of $C_{n,p}$ by framed gauge equivalence.

There is a natural map
\begin{equation}\label{projection}
M^f_{n,p}\to M^u_{n,p}
\end{equation}
that sends a framed monopole to its unframed gauge equivalence class.  There is also natural action of $\R$ on the framed moduli space that is generated by infinitesimal gauge transformation with $\Phi$.  Concretely, the action of $s\in\R$ is
\begin{equation}\label{action}
(A,\Phi)\mapsto (A^s,\Phi):=(g_s^{-1}dg_s+g_s^{-1}Ag_s,\Phi);\quad g_s:=\exp(s\Phi).
\end{equation}
This clearly preserves the fibres of \eqref{projection}, because $g_s$ is an unframed gauge transformation.  The following lemma shows that this action is free.
\begin{lemma}\label{lemma:changing CS}
Let $(A,\Phi)$ be a monopole, let $g_s=\exp(s\Phi)$ and let $A^s=g_s^{-1}dg_s+g_s^{-1}Ag_s$.  Then
\begin{equation}
\label{varAundergt} 
\frac{d A ^s }{ds}= d^{A  ^s }\Phi 
\end{equation} 
and
\begin{equation}
\label{chernpuregaugetrasf} 
CS[A,A^s]=-8\pi n p s.
\end{equation}
In particular, $A^s$ is framed gauge equivalent to $A$ if and only if $s=0$.
\end{lemma}
\begin{proof}
We calculate
\begin{equation}
\frac{d}{ds}g_s=g_s\Phi,\quad\frac{d}{ds}g_s^{-1}=-\Phi g_s^{-1}\quad\text{and}\quad\frac{d}{ds}dg_s=g_sd\Phi+dg_s\Phi.
\end{equation}
So
\begin{equation}
\frac{d}{ds}A^s = g_s^{-1}(g_sd\Phi+dg_s\Phi+Ag_s\Phi)-\Phi g_s^{-1}(dg_s+A_s\Phi)
=d^{A^s}\Phi.
\end{equation}
We prove (\ref{chernpuregaugetrasf})  by direct calculation from the explicit formula \eqref{chern-simons explicit} (one can also prove this directly from the definition \eqref{chern-simons definition}),
\begin{equation} 
\begin{split} 
\frac{d}{ds}CS[A,A^s]
&=\int_{H^3}\tr(d^{A^s}\Phi\wedge dA^s + A^s\wedge d(d^{A^s}\Phi)+2A^s \wedge A^s \wedge d^{A^s}\Phi)
\\
&\quad 
+ \int_{S^2_\infty}\tr(A\wedge d^{A^s}\Phi) \\
&=\int_{H^3}\big[ 2\tr(d^{A^s}\Phi\wedge F^{A^s})-d\tr(A^s\wedge d^{A^s}\Phi)\big] + \int_{S^2_\infty}\tr(A\wedge d^{A^s}\Phi)\\ &
=\int_{H^3}2\tr(d^{A^s}\Phi\wedge F^{A^s}).
\end{split}
\end{equation}
since the boundary conditions imply that $d^{A^s}\Phi=0$ on $S^2_\infty$.  Therefore, by \eqref{bog bound},
\begin{equation}
\frac{d}{ds}CS[A,A^s]=-8\pi np
\end{equation}
The result follows.
\end{proof}

\begin{proposition}\label{prop:principal}
The moduli space of framed monopoles is naturally an $\R$-principal bundle over the moduli space of unframed monopoles.
\end{proposition}
\begin{proof}
We have already exhibited a projection \eqref{projection} from $M^f_{n,p}$ to $M^u_{n,p}$ and an action \eqref{action} of $\R$ on the fibres.  We showed in the previous lemma that the action is free.  It remains to show that the action is transitive.

Let $(A^0,\Phi^0)$ and $(A^1,\Phi^1)$ be two monopoles that are unframed gauge equivalent.  We seek an $s$ such that the action \eqref{action} of $s$ on $(A^0,\Phi^0)$ gives a monopole which is framed gauge equivalent to $(A^1,\Phi^1)$.  Let $(A^s,\Phi^s)=(g_s^{-1}A^0g_s+g_s^{-1}dg_s,\Phi^0)$ denote the image of $(A^0,\Phi^0)$ under this action.  Using lemma \eqref{lemma:changing CS}, we can choose $s$ such that
\begin{equation}
CS[A^0,A^s]=CS[A^0,A^1].
\end{equation}
Since $A^0,A^1,A^s$ are unframed gauge equivalent, we can apply the identity \eqref{chern-simons additivity} to show that
\begin{equation}
CS[A^s,A^1]=CS[A^0,A^1]-CS[A^0,A^s]=0.
\end{equation}
So $(A^s,\Phi^s)$ and $(A^1,\Phi^1)$ are framed gauge equivalent.
\end{proof}
\begin{corollary}\label{cor:dimension Mf}
$\dim(M^f_{n,p})=4n$.
\end{corollary}
\begin{proof}
Proposition \ref{prop:principal} shows that $\dim(M^f_{n,p})=\dim(M^u_{n,p})+1$, and Braam \cite{braam:1989} has shown that shown that $\dim(M^u_{n,p})=4n-1$.
\end{proof}

Our definition of the framed moduli space is different from the definition given in Atiyah--Hitchin \cite{atiyah:2014} for euclidean monopoles.  We compare the two definitions in appendix \ref{appendix}.  Atiyah--Hitchin's framed moduli space involves a choice of point in $S^2_\infty$, so it does not obviously admit an action of the isometry group of $H^3$.  For euclidean monopoles, they showed that their framed moduli space does admit an action of the euclidean isometry group \cite{atiyah:2014}, but their argument does not adapt easily to the hyperbolic setting.  In contrast, our definition leads to a framed moduli space that clearly admits an action of the isometry group of $H^3$.

Our definition of the framed moduli space is motivated by monopole dynamics.  One can extend the energy functional \eqref{energy functional} to define a lagrangian gauge theory on $\R\times H^3$.  Dynamical configurations consist of a gauge field $A$ and scalar field $\Phi$ on $\R\times H^3$.  Then, for fixed $t_0,t_1\in\R$, the quantity
\begin{equation}
\Theta(t_0,t_1) = \int_{[t_0,t_1]\times H^3}\tr(F^A\wedge F^A)
\end{equation}
is clearly gauge-invariant and equal to 0 for any static configuration (i.e.\ any configuration that is pulled back from $H^3$).  By working in a gauge where $A_t=0$, slowly moving dynamical configurations can be approximated by paths in the monopole moduli space, parametrised by $t\in\R$. Consider a path of the form $(A^t,\Phi^t)=(g_t^{-1}Ag_t+g_t^{-1}dg_t,g_t^{-1}\Phi g_t)$, where $(A,\Phi)$ is a fixed monopole on $H^3$ and $g$ is a $t$-dependent gauge transformation.  It can be shown that $\Theta(s,t)=CS[A^s,A^t]$.  In particular, Lemma \ref{lemma:changing CS} shows that, for the choice $g_t=\exp(t\Phi)$,  $\Theta$ is non-zero.  So this is not a static monopole.  The corresponding path $\R\to M^f_{n,p}$ is non-constant, but the corresponding path $\R\to M^u_{n,p}$ is constant.  So the unframed moduli space is unable to distinguish this dynamical monopole from a static monopole, but the framed moduli space is able to do so.

Paths in the moduli space generated by gauge transformations with $\Phi$ are known as dyons and carry an electric charge, which is defined to be the abelian electric flux through $S^2_\infty$ and computed by the integral $\frac{1}{2p}\int_{S^2_\infty}\tr(\Phi\ast\frac{d A}{dt})$.  For $(A^t,\Phi^t)=(g_t^{-1}Ag_t+g_t^{-1}dg_t,g_t^{-1}\Phi g_t)$, by (\ref{varAundergt})   $\frac{d A ^t }{dt}= d^{ A ^t }\Phi$. Denoting by $F ^t = g _t ^{-1} F g _t $ the curvature of $A _t $, the electric flux is  $\frac{1}{2p}\int_{S^2_\infty}\tr(\Phi ^t  F^t  ) =\frac{1}{2p}\int_{S^2_\infty}\tr(\Phi   F  ) =- 2 \pi  n$ and by \eqref{chernpuregaugetrasf} the rate of change of the Chern-Simons number is $-8 \pi n p $.  Hence the electric charge is proportional to the rate of change of the Chern-Simons number.

For much of this article we will work with tangent vectors to the framed (and unframed) moduli spaces.  The tangent spaces can be described as follows.  First, a tangent vector to the configuration space $C_{n,p}$ at a point $(A,\Phi)$ consists of an $\su(2)$-valued 1-form $a$ and 0-form $\phi$ solving the linearised Bogomolny equation,
\begin{equation}\label{linearised bog_eq}
d^A\phi+[a,\Phi]=\ast d^A a,
\end{equation}
such that $b=a+\phi\,d\theta$ extends to a smooth circle-invariant section of $\Lambda^1(S^4)\otimes\End(V)$.  Given any $\su(2)$-valued function $\chi$ that extends to a circle-invariant section of $\End(V)$ over $S^4$, $(a,\phi):=(d^A\chi,[\Phi,\chi])$ is a solution of \eqref{linearised bog_eq} representing an infinitesimal unframed gauge transformation.  Thus we define the space $\g^u_{A,\Phi}$ of infinitesimal unframed gauge transformations to be the set of all pairs $(d^A\chi,[\Phi,\chi])$ such that $\chi:H^3\to\su(2)$ extends smoothly to a circle-invariant section of $\End(V)\to S^4$.\label{gu definition}

In order to define infinitesimal framed gauge transformations we look at the linearisation of the framing condition. Let  $g_s$ be a family of framed gauge transformations such that  $g _0 =1 $, $g_s^{-1}\partial_s g_s|_{ s =0 }=\chi$ and set $A ^s = g_s^{-1}Ag_s+g_s^{-1}dg_s$. Since $g  _s $ is framed,  $CS[A,A ^s ]=0 $. A computation similar to the one in the proof of Lemma \ref{lemma:changing CS} then shows that 
\begin{equation}
0=\left.\frac{d}{ds}\right|_{s=0}CS[A,A^s] = \int _{ H ^3 } 2\tr (F^A  \wedge d ^A \chi ).
\end{equation}
Hence we define the space $\g^f_{A,\Phi}$ of infinitesimal framed gauge transformations to be  
\begin{equation} 
\label{framedinfinitesimalgt} 
\g^f_{A,\Phi}=\left\{ (d ^A \chi , [ \Phi , \chi ]) \in \g^u _{  A , \Phi }  :\int_{H^3}\tr( F^A \wedge d^A\chi )=0\right \}.
\end{equation} 
Both $\g^u_{A,\Phi}$ and $\g^f_{A,\Phi}$ are subspaces of the space $T_{A,\Phi}C_{n,p}$ of solutions $(a,\phi)$ of \eqref{linearised bog_eq} that extend smoothly to $S^4$ in a circle-invariant way.  The tangent spaces to the framed and unframed moduli spaces are the quotient spaces
\begin{equation}\label{tangent space definition}
T_{A,\Phi}M^f_{n,p}=T_{A,\Phi}C_{n,p}/\g^f_{A,\Phi},\quad T_{A,\Phi}M^u_{n,p}=T_{A,\Phi}C_{n,p}/\g^u_{A,\Phi}.
\end{equation}

\subsection{The moduli space metric}
An $L^2$ metric on the moduli space is a metric obtained by integrating the pointwise inner product of a tangent vector $X=(a,\phi)$ with itself:
\begin{equation}\label{metric integral}
g(X,X)=-\frac{1}{2}\int_{H^3}\tr(a\wedge\ast a+\phi\ast \phi).
\end{equation}
In order to make this well-defined, we must choose a particular representative $(a,\phi)$ of a gauge equivalence class; in other words, we must fix a gauge.  Unfortunately, any real representative of a non-zero tangent vector results in a divergent integral.  This is because unframed hyperbolic monopoles have the holographic property of being determined by the abelian connection induced at infinity \cite{braam:1990}.  Any representative $a$ of a non-zero tangent vector must be non-zero when restricted to the line bundles $L^\pm$ over $S^2_\infty$.  It follows that $a'=-\tr(a\Phi)/\sqrt{-\tr(\Phi^2)}$ gives a non-vanishing 1-form on any geodesic 2-sphere of sufficiently large radius.  The metric $dr^2+\sinh^2 (r)  g_{S^2}$ of $H^3$ is such that the 1-form $a'$ on $S^2$ satisfies $\ast a' = \ast_{S^2}a'\wedge dr$.  Hence there exist positive constants $R,C$ such that
\begin{equation}
\label{divergence of the usual metric} 
g(X,X)\geq \frac{1}{2}\int_{r\geq R} a'\wedge\ast_{S^2} a'\wedge dr \geq \int_{R}^\infty C\,dr = \infty.
\end{equation}
Thus the integral determining the metric is divergent. 

One way to avoid this divergence is to choose a non-real representative of a real tangent vector.   Using an $\mathfrak{sl}(2,\C)$-valued gauge transformation, we can choose a representative $(a,\phi)$ that is not antihermitian, but is such that $(a+a^\dagger,\phi+\phi^\dagger)$ is pure gauge.   The advantage of doing so is that it opens up the possibility of $a|_{S^2_\infty}$ being a holomorphic (or antiholomorphic) 1-form.  A 1-form $a_\infty$ that is of type $(1,0)$ or $(0,1)$ with respect to the complex structure of $S^2_\infty$ satisfies $a_\infty\wedge \ast_{S^2}a_\infty=0$.  Thus the divergence of the $L^2$ metric could be avoided if the restriction of $a$ to $S^2_\infty$ were holomorphic or antiholomorphic.  Note that this argument only applies to the complex-bilinear form $g$ defined in \eqref{metric integral}; it does not apply to the hermitian metric $-\int\tr(a\wedge\ast\bar{a}+\phi\ast\bar{\phi})$ because $a_\infty\wedge\ast_{S^2}\bar{a}_\infty=0$ only when $a_\infty=0$.

A non-real gauge-fixing condition has been proposed by Gharamti--Figueroa-O'Farrill \cite{figueroa-ofarrill:2014}.  It takes the form
\begin{equation}\label{gauge fixing}
\ast d^A \ast a_\pm + [\Phi,\phi_\pm]\pm 2i\phi_\pm=0.
\end{equation}
It differs from the more familiar harmonic gauge-fixing condition used in \cite{braam:1990,sutcliffe:2022} by the term $\pm 2i\phi_\pm $.  Equation \eqref{gauge fixing} is not one but two gauge-fixing conditions corresponding to the choice of sign.  The two representatives $(a_\pm,\phi_\pm)$ of a real tangent vector to the moduli space will differ by a gauge transformation.  We are free to choose $(a_+,\phi_+)=(-a_-^\dagger,-\phi_-^\dagger)$, as this choice clearly solves the ``+'' gauge fixing condition if $(a _- , \phi _- )$ solves the ``$-$'' condition, and, if $(a_-,\phi_-)$ represents a real tangent vector, it differs from $(-a_+^\dagger,-\phi_+^\dagger)$ by a gauge transformation --- compare with  (\ref{a0phi0}) below.

The gauge-fixing condition \eqref{gauge fixing} was not motivated by the divergence of moduli space metrics but instead arose from a study of supersymmetry transformations.  Indeed, comments in the introduction of \cite{figueroa-ofarrill:2014} suggest that the authors had not suspected that this gauge-fixing constraint would cure the metric divergence.  Nevertheless, we will show below that this gauge-fixing constraint leads to a satisfying regularisation of the divergent metric.  We will show in the next section that:

\begin{theorem}\label{thm:gauge fixing}
Let $(A,\Phi)$ be a hyperbolic monopole with half-integer mass and let $(a,\phi)$ be a solution of the linearised Bogomolny equation \eqref{linearised bog_eq} that represents a tangent vector to the moduli space.  Then there exists an infinitesimal unframed $\mathfrak{sl}(2,\C)$ gauge transformation $\chi$ such that $(a_-,\phi_-)=(a+ d^A\chi,\phi+[\Phi,\chi])$ satisfies the gauge-fixing condition \eqref{gauge fixing}.  This $\chi$ has the following properties:
\begin{enumerate}
\item Over $S^2_\infty$, $a_-$ is a section of $\Lambda^{1,0}$.
\item The $L^2$ norm \eqref{metric integral} of $(a_-,\phi_-)$ is finite.
\end{enumerate}
\end{theorem}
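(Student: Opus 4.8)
The plan is to solve the gauge-fixing equation \eqref{gauge fixing} for $\chi$ by recasting it as an elliptic PDE on the compact manifold $S^4$ and invoking Fredholm theory. Writing $(a_-,\phi_-)=(a+d^A\chi,\phi+[\Phi,\chi])$ and substituting into \eqref{gauge fixing}, the equation becomes
\begin{equation}
\Delta_A^\pm\chi := \ast d^A\ast d^A\chi + [\Phi,[\Phi,\chi]]\pm 2i[\Phi,\chi] = -\left(\ast d^A\ast a + [\Phi,\phi]\pm 2i\phi\right),
\end{equation}
an inhomogeneous linear equation for $\chi$. The key observation is that, under the conformal identification of $S^1\times H^3$ with $S^4\setminus S^2$, the operator on the left is (a circle-invariant reduction of) a genuinely elliptic operator over the \emph{compact} manifold $S^4$. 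Concretely, a circle-invariant section of $\End(V)\to S^4$ is the same datum as $\chi:H^3\to\su(2)$ (or $\mathfrak{sl}(2,\C)$, after complexification), and the Laplace-type operator $d^B d^B{}^\ast + d^B{}^\ast d^B$ built from the instanton connection $B$ on $S^4$, when restricted to circle-invariant sections and written in the monopole gauge \eqref{B Phi A}, reproduces $\Delta_A^\pm$ up to the conformal factor $r_1^2$ and a first-order term proportional to $s=1$; the $\pm 2i$ term is exactly the contribution of the weight-$\mp p$ circle action on $L^\pm$ computed in \eqref{S1 action}. So the natural boundary conditions for \eqref{gauge fixing} — smooth circle-invariant extension over $S^4$ — turn the problem into: \emph{invert an elliptic operator on a closed manifold}.

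The steps I would carry out, in order, are: \textbf{(i)} set up the correspondence precisely, showing that $(a,\phi)$ extending smoothly and circle-invariantly to $S^4$ corresponds to an honest $\End(V)$-valued one-form on $S^4$, and that the linearised Bogomolny equation \eqref{linearised bog_eq} is the circle-invariant linearised anti-self-duality equation; \textbf{(ii)} identify $\Delta_A^\pm$ with the circle-invariant part of an elliptic second-order operator $P$ on $S^4$ (the relevant piece of the deformation complex Laplacian twisted by a line bundle of appropriate weight), so that $P$ is elliptic and self-adjoint-plus-lower-order, hence Fredholm of index zero between the appropriate Sobolev spaces; \textbf{(iii)} compute $\ker P$: show the only circle-invariant $\mathfrak{sl}(2,\C)$-valued $\chi$ with $\Delta_A^\pm\chi=0$ is $\chi=0$, using an integration-by-parts/Weitzenböck argument — pairing $\Delta_A^\pm\chi$ with $\chi^\dagger$ and integrating over $S^4$ (where there is no boundary term) gives $\|d^A\chi\|^2 + \|[\Phi,\chi]\|^2 \pm 2i\langle[\Phi,\chi],\chi\rangle_{?}=0$; the would-be obstruction is that the $\pm 2i$ term is not manifestly sign-definite, but because $\Phi$ is antihermitian the operator $\ad_\Phi$ is antihermitian, so $[\Phi,\cdot]$ has purely imaginary spectrum and the cross term combines with $\|[\Phi,\chi]\|^2$ to give $\| ([\Phi,\cdot]\pm i)\chi\|^2 - \|\chi\|^2$ plus something — one then has to check the resulting quadratic form $\|d^A\chi\|^2 + \|([\Phi,\cdot]\pm i)\chi\|^2$ has no kernel, equivalently that $\mp i$ is not an eigenvalue of $\ad_\Phi$ on parallel-in-a-weak-sense sections; irreducibility of the monopole (proved in the excerpt for $n,p>0$) plus the half-integrality forcing the boundary eigenvalues to be $\pm ip$ with $p\ge 1/2$ should rule this out, or alternatively the kernel injects into $T_{A,\Phi}M^f\otimes\C$ in a way that forces it to be trivial; \textbf{(iv)} conclude existence and uniqueness of $\chi$, hence of $(a_-,\phi_-)$. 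Elliptic regularity on $S^4$ then gives that $\chi$ — and so $(a_-,\phi_-)$ — is smooth.

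For the two itemised properties: \textbf{property (1)} follows from examining the equation restricted to $S^2_\infty$. On the boundary the connection abelianises, $\Phi\to\mp ip$ on $L^\pm$, and the linearised Bogomolny equation degenerates; the ``$-$'' gauge-fixing term $-2i\phi_-$ is tuned precisely so that, on the component of $a_-$ valued in $L^\pm$, the pair of first-order boundary equations forces the $(0,1)$-part of $a_-|_{S^2_\infty}$ to vanish — this is the manifestation of the holomorphicity remarked on before the theorem statement, and is the reason this particular zeroth-order correction, rather than any other, is the right one. I would prove it by writing $a_-|_{S^2_\infty}$ in a local holomorphic frame on $S^2$, plugging into \eqref{gauge fixing} and \eqref{linearised bog_eq}, and reading off that the anti-holomorphic component is killed. \textbf{Property (2)}, finiteness of the $L^2$ norm, then follows: near $S^2_\infty$ the dangerous contribution to \eqref{metric integral} came exactly from $a'\wedge\ast_{S^2}a'$ with $a'$ the projection along $\Phi$, and the computation before the theorem shows this vanishes identically for a $(1,0)$-form; more cleanly, $-\tr(a_-\wedge\ast a_-)$ extends continuously across the collapsing $S^2$ because $a_-+\phi_- d\theta$ is a smooth form on the \emph{compact} $S^4$ and the conformal factors work out, so the integrand is bounded and the hyperbolic volume near the boundary, being $\sim \sinh^2 r\,dr$, is integrated against something decaying like $e^{-2r}$. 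The main obstacle I anticipate is step (iii): proving the kernel vanishes despite the non-self-adjoint $\pm 2i$ term, i.e. controlling the indefinite cross-term and showing $\mp i \notin \operatorname{spec}(\ad_\Phi)$ on the relevant function space; everything else is fairly standard elliptic theory once the reduction to $S^4$ in step (ii) is in place.
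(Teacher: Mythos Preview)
Your overall strategy --- reduce to an elliptic problem on the compact $S^4$ and invoke Fredholm theory --- matches the paper's, but there is a genuine gap at step (ii). The identification of $\Delta_A^-$ with an elliptic operator on $S^4$ is not a twist by an honest line bundle: the paper shows (Lemma~\ref{lemma:Hodge}) that $\delta^- d^B\chi = -\big(\tfrac{2w}{1+|w|^2}\big)^3(\triangle_S^B+2)\big(\tfrac{1+|w|^2}{2w}\big)\chi$, so the correct unknown on $S^4$ is $\tfrac{1}{w}\chi$ and the transformed source is $\tfrac{1}{w^3}\delta^- b$. For a generic $b$ extending smoothly to $S^4$, this source is \emph{singular} on the fixed $S^2$, and one cannot invoke Fredholm theory. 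The paper's remedy, which your proposal is missing entirely, is a \emph{preliminary} gauge transformation chosen via Dolbeault cohomology on the conformally equivalent K\"ahler manifold $H^2\times S^2$: using a holomorphic filtration $0\subset L\subset V$ and the K\"unneth formula, every class in $H^{0,1}(H^2\times S^2,\End_0 V)$ is represented by $w^{2p}\sigma$ for some $\sigma$ on $S^2$, so after this first gauge change $b^{0,1}=w^{2p}\sigma$ and a direct computation (Lemma~\ref{lemma:smoothness}) gives $\delta^- b=O(w^3)$. Only then does the Poisson equation $(\triangle_S^B+2)u=f$ on $S^4$ have a smooth right-hand side.

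Two further points. Your step (iii) does not work as written: the integration-by-parts on $H^3$ with the non-self-adjoint $\pm 2i$ term produces an indefinite form, and the spectral condition on $\ad_\Phi$ you gesture at is neither formulated precisely nor proved. The paper sidesteps this entirely --- once the equation is $(\triangle_S^B+2)u=f$ on $S^4$, positivity of $\triangle_S^B$ kills the kernel in one line. And for property (2), it is not enough that $a_-|_{S^2_\infty}$ be of type $(1,0)$: the $L^2$ integrand carries a conformal factor $|w|^{-2}$ near the fixed sphere, so one needs the quantitative vanishing $b_-^{0,1}=O(w)$ (Lemma~\ref{lemma:BC b}), which is again a consequence of the two-step construction (both the holomorphic gauge $b^{0,1}=w^{2p}\sigma$ and the fact that the solution of the Poisson equation is itself $O(w)$).
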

In order to apply this theorem to the framed  moduli space, we need the gauge transformation $\chi$ to be framed.  But it is easy to modify $\chi$ so that it is framed.  If $(a_-,\phi_-)$ solves the gauge-fixing condition \eqref{gauge fixing} then so does $(a_--c\,d^A\Phi,\phi_-)$ for any constant $c$.  So replacing $\chi$ in Theorem \ref{thm:gauge fixing} with $\tilde{\chi}=\chi-c\Phi$ and setting
\begin{equation}
c = \int_{H^3}\tr(d^A\chi\wedge F^A )/\int_{H^3}\tr(d^A\Phi\wedge F^A )
= - \frac{1}{4 \pi np} \int_{H^3}\tr(d^A\chi\wedge F^A )
\end{equation}
produces a framed gauge transformation that still solves \eqref{gauge fixing}.  Hence from Theorem \ref{thm:gauge fixing} we also obtain the existence part of the following:
\begin{theorem}\label{thm:framed gauge fixing}
Let $(A,\Phi)$ and $(a,\phi)$ be as in Theorem \ref{thm:gauge fixing}.  Then there exists a unique infinitesimal framed gauge transformation $ \chi$ such that $(a_-,\phi_-)=(a+ d^A \chi,\phi+[\Phi, \chi])$ satisfies the gauge-fixing condition \eqref{gauge fixing}.  This $ \chi$ has the two properties listed in Theorem \ref{thm:gauge fixing}.
\end{theorem}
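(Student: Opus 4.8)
The plan is to note first that existence of a framed $\chi$ has already been established above, by combining Theorem \ref{thm:gauge fixing} with the correction $\chi\mapsto\chi-c\Phi$, so that only \emph{uniqueness} remains. Suppose $\chi_1,\chi_2$ are two framed gauge transformations satisfying the conclusion and set $\psi:=\chi_1-\chi_2$, a smooth circle-invariant section of $\End_0(V)\otimes\C\to S^4$. Subtracting the two instances of \eqref{gauge fixing} shows that $(d^A\psi,[\Phi,\psi])$ solves the homogeneous gauge-fixing equation
\[
\ast d^A\ast d^A\psi+[\Phi,[\Phi,\psi]]-2i[\Phi,\psi]=0;
\]
moreover $d^A\psi|_{S^2_\infty}$ is of type $(1,0)$, being the difference of two $\Lambda^{1,0}$-sections (Theorem \ref{thm:gauge fixing}(1)), and $\psi$ is framed, so that, integrating by parts, $\int_{S^2_\infty}\tr(\psi F^A)=\int_{H^3}\tr(d^A\psi\wedge F^A)=0$. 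I would show that these three facts force $\psi=0$.

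\emph{Step A: $\psi$ vanishes on $S^2_\infty$.} Because the circle acts on the off-diagonal summands of $\End_0(V)|_{S^2_\infty}$ with the nonzero weights $\mp2p$, circle-invariance forces $\psi|_{S^2_\infty}$ to take values in the abelian subalgebra; call it $\psi_0$. Then $d^A\psi|_{S^2_\infty}=d\psi_0$, which is of type $(1,0)$ by the above, so $\bar\partial\psi_0=0$ and $\psi_0$ is a (Lie-algebra valued) holomorphic function on $S^2_\infty\cong\mathbb{C}P^1$, hence constant. Thus $\psi_0$ is proportional to $\Phi|_{S^2_\infty}$, and since $\int_{S^2_\infty}\tr(\Phi F^A)=-4\pi np\neq0$ (see \eqref{bog bound}), the framing condition $\int_{S^2_\infty}\tr(\psi_0 F^A)=0$ gives $\psi_0=0$.

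\emph{Step B: conclude $\psi=0$.} With $\psi$ vanishing on the fixed $S^2\subset S^4$, I would transfer the problem to the compact manifold $S^4$ via the conformal equivalence of $S^1\times H^3$ with $S^4\setminus S^2$. Over $S^1\times H^3$ (in the gauge where the instanton is $A+\Phi\,d\theta$) the homogeneous gauge-fixing equation is a Laplace-type equation: after absorbing the $-2i[\Phi,\psi]$ term into a twist of the connection by $e^{i\theta}$ it reads $\Delta\psi=\psi$, and the key observation is that $\Delta-1=\Delta+\tfrac{1}{6}s_g$ is precisely the coupled Yamabe operator for the product metric $g$ on $S^1\times H^3$, whose scalar curvature is $s_g=-6$. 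By conformal covariance of the Yamabe operator the corresponding equation on $S^4$ is $\bigl(\Delta+\tfrac{1}{6}s_{S^4}\bigr)\widetilde\psi=0$ for the conformally rescaled section $\widetilde\psi$, with $s_{S^4}>0$. Since $\psi$ vanishes on the fixed locus, $\widetilde\psi$ is bounded there, so by a removable-singularity argument across the codimension-two $S^2$ together with elliptic regularity it extends to a genuine solution on all of $S^4$. Finally $\Delta+\tfrac{1}{6}s_{S^4}$ is a positive operator (the coupled connection Laplacian is non-negative and $s_{S^4}>0$), whence $\widetilde\psi\equiv0$ and therefore $\psi=0$.

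I expect the hard part to be Step B. The $-2i[\Phi,\psi]$ term prevents any naive $L^2$ integration by parts performed directly on $H^3$ from closing — indeed $\psi$ need not even be square-integrable on $H^3$ — so one is forced onto the compact $S^4$, and the delicate point there is to set up the conformal rescaling correctly and to control both $\psi$ and the rescaled section near the fixed point set of the circle action, where the $e^{i\theta}$-twist, the conformal factor and the equivariant structure of $V$ all interact. This is exactly the point at which the hypothesis $2p\in\mathbb{N}$, which furnishes the smooth $S^4$ compactification, and property (1) of Theorem \ref{thm:gauge fixing}, which via Step A yields the vanishing of $\psi$ at the boundary, become indispensable; the borderline small-$p$ cases are likely to require additional care.
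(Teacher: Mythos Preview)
Your overall strategy in Step~B --- recognising that the homogeneous gauge-fixing equation is, after the $e^{i\theta}$ twist, the conformal Laplacian (Yamabe operator) on $S^1\times H^3$, and then passing to $S^4$ where $\Delta_S^B+2$ is positive --- is sound and is essentially equivalent to the paper's Lemma~\ref{lemma:Hodge}.  However, there is a genuine gap in Step~A.  You invoke Theorem~\ref{thm:gauge fixing}(1) for \emph{both} $\chi_1$ and $\chi_2$ in order to conclude that $d^A\psi|_{S^2_\infty}$ is of type $(1,0)$.  But Property~(1) is part of the \emph{conclusion} of Theorems~\ref{thm:gauge fixing} and~\ref{thm:framed gauge fixing}, established only for the particular $\chi$ produced by the existence argument; it is not a hypothesis you may assume for an arbitrary framed $\chi_2$ solving \eqref{gauge fixing}.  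Without this, you cannot conclude that $\psi_0$ is holomorphic, and hence cannot conclude $\psi|_{S^2}=0$.  And without $\psi|_{S^2}=0$, the rescaled section $\tilde\psi=\frac{1+|w|^2}{2w}\psi$ has a genuine $1/w$ singularity along the fixed $S^2$ (coming from the diagonal part of $\psi$), which is \emph{not} removable for $(\Delta_S^B+2)\tilde\psi=0$: the function $1/w$ is harmonic away from $w=0$ but contributes a derivative-of-delta term distributionally, so the positivity argument on $S^4$ does not close.

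The paper takes a completely different route to uniqueness, proving it in Section~\ref{sec:Killing} as a by-product of the Killing-spinor machinery.  One first establishes (via Proposition~\ref{prop:Killing} and the equivariant index theorem) that the map $E_x^-\otimes K^-\to T_x^{\mathbb C}M^f_{n,p}$ of \eqref{tangent space decomposition} is an isomorphism.  Then if $(d^A\psi,[\Phi,\psi])$ solves \eqref{gauge fixing} with $\psi$ framed, one writes it as $\sum_\alpha\nu_\alpha\otimes\psi_\alpha^\ast$ with $\nu_\alpha\in E_x^-$; since this represents the zero tangent vector and the map is injective, $\nu_\alpha=0$, whence $d^A\psi=0$, and irreducibility of the monopole gives $\psi=0$.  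This avoids the boundary issue entirely, at the cost of developing the eigenspinor decomposition --- which the paper needs anyway for the pluricomplex structures.  Your direct analytic approach would be attractive if the circularity in Step~A could be removed, for instance by proving that Property~(1) holds automatically for \emph{any} solution of \eqref{gauge fixing} that extends smoothly to $S^4$; but that is an additional argument you have not supplied.
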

We will show that $(a_--c\,d^A\Phi,\phi_-)$ has the two properties listed in Theorem \ref{thm:gauge fixing} at the end of section \ref{sec:existence}.  The uniqueness part of this Theorem will be proved in Proposition \ref{prop:uniqueness} in section \ref{sec:Killing}.  Turning to the unframed moduli space, we have that:
\begin{corollary}\label{cor:unframed gauge fixing}
Let $(A,\Phi)$ and $(a,\phi)$ be as in Theorem \ref{thm:gauge fixing}.  Then there exists a unique infinitesimal unframed gauge transformation $\chi$ such that $(a_-,\phi_-)=(a+ d^A\chi,\phi+[\Phi,\chi])$ satisfies the gauge-fixing condition \eqref{gauge fixing} and $\int_{H^3}\tr(a_-\wedge F^A )=0$.  This $\chi$ has the two properties listed in Theorem \ref{thm:gauge fixing}.
\end{corollary}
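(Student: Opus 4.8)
The plan is to deduce the corollary from Theorem~\ref{thm:framed gauge fixing} by sliding the framed solution along the single gauge direction generated by the Higgs field $\Phi$. The mechanism rests on two observations. First, $\Phi$, regarded as an infinitesimal gauge transformation, preserves the gauge-fixing condition~\eqref{gauge fixing}: writing $L\psi=\ast d^A\ast d^A\psi+[\Phi,[\Phi,\psi]]-2i[\Phi,\psi]$ for the change of the left-hand side of~\eqref{gauge fixing} under $(a_-,\phi_-)\mapsto(a_-+d^A\psi,\phi_-+[\Phi,\psi])$, one has $L\Phi=\ast d^A\ast d^A\Phi=\ast d^A F^A=0$ by the Bogomolny equation~\eqref{bog_eq} and the Bianchi identity. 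Second, $\Phi$ is not a framed gauge transformation, since $\int_{H^3}\tr(d^A\Phi\wedge F^A)=-4\pi np\neq 0$ by the computation behind~\eqref{bog bound}.

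The one step with any content, which I would carry out first, is to identify $\ker L=\C\,\Phi$, where $L$ acts on unframed $\mathfrak{sl}(2,\C)$ gauge transformations. Any $\psi\in\ker L$ splits uniquely as $\psi=\lambda\Phi+\psi'$ with $\psi'$ framed (solve $\int_{H^3}\tr(d^A(\psi-\lambda\Phi)\wedge F^A)=0$ for $\lambda$, which is possible because $\int_{H^3}\tr(d^A\Phi\wedge F^A)\neq 0$); then $L\psi'=L\psi-\lambda L\Phi=0$, so adding $\psi'$ to the framed solution furnished by Theorem~\ref{thm:framed gauge fixing} would yield a second framed solution of~\eqref{gauge fixing}, forcing $\psi'=0$ by the uniqueness clause of that theorem (established in Section~\ref{sec:Killing}). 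Hence the representatives $(a+d^A\chi,\phi+[\Phi,\chi])$ of the given tangent vector satisfying~\eqref{gauge fixing} form the affine complex line $\{(a^f_-+\mu\,d^A\Phi,\phi^f_-):\mu\in\C\}$, where $(a^f_-,\phi^f_-)=(a+d^A\chi_f,\phi+[\Phi,\chi_f])$ and $\chi_f$ is the framed solution.

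For existence, take $\chi=\chi_f+c\,\Phi$ with $c=\tfrac{1}{4\pi np}\int_{H^3}\tr(a^f_-\wedge F^A)$. This integral is finite: $\tr(a^f_-\wedge F^A)=\tr(a^f_-\wedge\ast d^A\Phi)$, and Cauchy--Schwarz bounds its integral by $\|a^f_-\|_{L^2}\|d^A\Phi\|_{L^2}$, both finite by the second conclusion of Theorem~\ref{thm:gauge fixing} and by finiteness of the monopole energy. Then $(a_-,\phi_-)=(a^f_-+c\,d^A\Phi,\phi^f_-)$ still solves~\eqref{gauge fixing} because $L(c\Phi)=0$, and the choice of $c$ gives $\int_{H^3}\tr(a_-\wedge F^A)=\int_{H^3}\tr(a^f_-\wedge F^A)+c\int_{H^3}\tr(d^A\Phi\wedge F^A)=0$. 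The two conclusions of Theorem~\ref{thm:gauge fixing} survive the adjustment: $d^A\Phi$ vanishes on $S^2_\infty$ because the monopole abelianises there, so $a_-|_{S^2_\infty}=a^f_-|_{S^2_\infty}$ is still a section of $\Lambda^{1,0}$; and $d^A\Phi\in L^2$, so $(a_-,\phi_-)$ still has finite $L^2$ norm~\eqref{metric integral}. For uniqueness, if $\chi,\chi'$ both satisfy the conclusions then $\chi-\chi'\in\ker L=\C\,\Phi$, say $\chi-\chi'=\mu\Phi$, and subtracting the two instances of the condition $\int_{H^3}\tr(a_-\wedge F^A)=0$ gives $-4\pi np\,\mu=0$, hence $\mu=0$.

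I do not anticipate a genuine obstacle here. The only input with content is the identification $\ker L=\C\,\Phi$, which is an immediate corollary of the uniqueness part of Theorem~\ref{thm:framed gauge fixing}; the remaining verifications — that $L\Phi=0$, that adding $\mu\,d^A\Phi$ affects neither the boundary type nor $L^2$-finiteness, and that the affine condition on $\mu$ has nonzero slope $-4\pi np$ — are bookkeeping. Morally the corollary is Theorem~\ref{thm:framed gauge fixing} with the normalisation transferred off the gauge parameter $\chi$ and onto the $1$-form $a_-$.
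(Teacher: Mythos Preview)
Your proof is correct and follows essentially the same approach as the paper: decompose the unframed gauge transformation as a framed one plus a complex multiple of $\Phi$, invoke Theorem~\ref{thm:framed gauge fixing} for the framed part, and fix the multiple of $\Phi$ using the extra integral constraint. Your write-up is more detailed than the paper's---you spell out $\ker L=\C\,\Phi$, check finiteness of the integral defining $c$ via Cauchy--Schwarz, and verify that the two properties survive the adjustment---but the underlying argument is identical.
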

\begin{proof}
We may write the unframed gauge transformation in the form $\chi=\chi_0-cd^A\Phi$, in which $\chi_0$ is a framed gauge transformation and $c\in\C$.  The requirement $\int_{H^3}\tr(a_-\wedge F^A )=0$ is met if and only if $c=\int_{H^3}\tr(a\wedge F^A )/\int_{H^3}\tr(d^A\Phi\wedge F^A )$.  Then Theorem \ref{thm:framed gauge fixing} ensures existence and uniqueness of a suitable framed gauge transformation $\chi_0$.
\end{proof}

Theorem \ref{thm:framed gauge fixing} ensures that calculating the bilinear form (\ref{metric integral}) for $(a, \phi )$ solutions of   the gauge fixing condition \eqref{gauge fixing} yields two symmetric bilinear forms $g_\pm$ on the tangent bundle of the framed monopole moduli space. They are given by
\begin{equation}\label{g+-}
g_\pm(X,X)=-\frac{1}{2}\int_{H^3}\tr(a_\pm\wedge\ast a_\pm+\phi_\pm\ast \phi_\pm),
\end{equation}
for $(a_-,\phi_-)$  the solution of the gauge fixing condition \eqref{gauge fixing} determined in Theorem \ref{thm:framed gauge fixing}, and $(a_+,\phi_+)=(-a_-^\dagger,-\phi_-^\dagger)$.  By construction, the bilinear forms $g_\pm$ are also related by conjugation: in other words,
\begin{equation}\label{metric conjugation}
g_-(X,X)=\overline{g_+(X,X)}
\end{equation}
for any real tangent vector $X$.  We recall that we call a tangent vector $X =(a, \phi )$  real if  $a + a ^\dagger $, $\phi + \phi ^\dagger $ are pure gauge.

We have been careful to avoid calling $g_\pm$ metrics because we do not know whether they are always real-valued, and if so, whether they are positive definite.  They are real if and only if $g_+=g_-$, so we can investigate reality by calculating their difference.  To do this, it is convenient to write
\begin{equation}\label{a0phi0}
(a_\pm,\phi_\pm)=(a_0\pm id^A\lambda,\phi_0\pm i[\Phi,\lambda]),
\end{equation}
in which $a_0,\phi,\lambda$ are $\su(2)$-valued.  The gauge-fixing condition \eqref{gauge fixing} is then equivalent to
\begin{align}
\ast d^A\ast a_0+[\Phi,\phi_0]-2[\Phi,\lambda]&=0,\label{gauge fixing real}\\
\ast d^A\ast d^A\lambda+[\Phi,[\Phi,\lambda]]+2\phi_0&=0.
\end{align}
A short calculation shows that
\begin{align}
g_+(X,X)-g_-(X,X) &= -2i\int_{H^3}\tr([\Phi,\lambda]\ast\phi_0 + d^A\lambda\wedge\ast a_0)\\
&=2i\int_{H^3}\tr (\lambda([\Phi,\ast\phi_0] + d^A\ast a_0))-d\tr(\lambda\ast a_0)\\
&=4i\int_{H^3}\tr \ast(\lambda[\Phi,\lambda])-2i\int_{S^2_\infty}\tr(\lambda\ast a_0)\\
&=-2i\int_{S^2_\infty}\tr(\lambda\ast a_0)\label{g imaginary part}.
\end{align}
Thus the imaginary part of $g_+$, which is proportional to $g_+-g_-$, reduces to an integral over the boundary of $H^3$.  However, we have not been able to show in general that this boundary integral vanishes.

More can be said in the case of inversion-symmetric monopoles.  Recall that inversion about a point in $H^3$ is the unique orientation-reversing isometry $I:H^3\to H^3$ whose fixed set is that point and which squares to the identity.  A monopole is called inversion-symmetric if
\begin{equation}
(A,\Phi)=(I^\ast A,-I^\ast\Phi)
\end{equation}
up to gauge equivalence (the minus sign in this equation ensures that the right-hand-side solves \eqref{bog_eq}).  Now suppose that $(a_\pm,\phi_\pm)$ solve the gauge-fixing condition \eqref{gauge fixing} and represent a chosen real tangent vector $X$ to the moduli space of inversion-symmetric monopoles.  Then $(a_\pm^I,\phi_\pm^I):=(I^\ast a_\pm,-I^\ast\phi_\pm)$ differ from $(a_\pm,\phi_\pm)$ by gauge transformations.  It is straightforward to check that
\begin{equation}
\ast d^A \ast a_\pm^I + [\Phi,\phi_\pm^I]\mp 2i\phi_\pm^I=0.
\end{equation}
By the uniqueness part of Theorem \ref{thm:framed gauge fixing}, this means that $(a_\pm^I,\phi_\pm^I)=(a_\mp,\phi_\mp)$, in other words,
\begin{equation}
I^\ast a_\pm=a_\mp\quad\text{and}\quad I^\ast\phi_\pm=-\phi_\mp.
\end{equation}
Since the $L^2$ inner product \eqref{g+-} is invariant under inversions, this means that
\begin{equation}
g_-(X,X)=g_+(X,X)
\end{equation}
for all real tangent vectors $X$ to the moduli space of inversion-symmetric monopoles.  Combined with the relation \eqref{metric conjugation}, this shows that
\begin{proposition}
$g_+ =g _- $ defines a real symmetric bilinear form on the moduli space of inversion-symmetric hyperbolic monopoles.
\end{proposition}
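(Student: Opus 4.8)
\section*{Proof proposal}

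The plan is to combine the uniqueness clause of Theorem~\ref{thm:framed gauge fixing} with the conjugation relation \eqref{metric conjugation}. Since $g_-(X,X)=\overline{g_+(X,X)}$ already holds for every real tangent vector, it suffices to prove the companion identity $g_+(X,X)=g_-(X,X)$ whenever $X$ is tangent to the locus of inversion-symmetric monopoles; the two identities together force $g_+(X,X)=g_-(X,X)\in\R$. Bilinearity over $\R$ and symmetry are inherited from the defining integral \eqref{g+-} by polarisation, so everything reduces to this one equality of real numbers.

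To establish it, write $\iota\colon[(A,\Phi)]\mapsto[(I^\ast A,-I^\ast\Phi)]$ for the involution induced on $M^f_{n,p}$; the moduli space of inversion-symmetric monopoles is a component of its fixed-point set, so a tangent vector $X$ there satisfies $d\iota(X)=X$. Choose a gauge in which the symmetry is strict, $I^\ast A=A$ and $I^\ast\Phi=-\Phi$ (or else carry through the gauge transformation $h$ implementing it), and represent $X$ both by $(a_+,\phi_+)$ solving the ``$+$'' gauge-fixing condition and by $(a_-,\phi_-)$ solving the ``$-$'' one. The pair $(a_+^I,\phi_+^I):=(I^\ast a_+,-I^\ast\phi_+)$ is again a solution of the linearised Bogomolny equation representing $d\iota(X)=X$. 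The key point is that, tracking the signs through $I^\ast\Phi=-\Phi$, it solves \eqref{gauge fixing} with the \emph{opposite} sign,
\begin{equation}
\ast d^A\ast a_+^I+[\Phi,\phi_+^I]-2\ii\phi_+^I=0,
\end{equation}
so that the uniqueness part of Theorem~\ref{thm:framed gauge fixing}, applied to the ``$-$'' condition, forces $(a_+^I,\phi_+^I)=(a_-,\phi_-)$, i.e.\ $I^\ast a_\pm=a_\mp$ and $I^\ast\phi_\pm=-\phi_\mp$.

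It remains to observe that the $L^2$ pairing \eqref{g+-} is invariant under the isometry $I$. This is most transparent after rewriting it as $\tfrac12\int_{H^3}\big(\langle a,a\rangle+\langle\phi,\phi\rangle\big)\,\mathrm{vol}_g$, the bilinear extension of a quantity built solely from the Riemannian metric and the (unsigned) Riemannian volume density; the two sign reversals arising from the orientation-dependence of $\ast$ and from integrating a pulled-back top form under an orientation-reversing diffeomorphism cancel, and trace-invariance under conjugation kills the auxiliary transformation $h$. Hence $g_-(X,X)$, computed from $(a_-,\phi_-)=(I^\ast a_+,-I^\ast\phi_+)$, equals $g_+(X,X)$ computed from $(a_+,\phi_+)$. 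Combined with \eqref{metric conjugation} this gives $g_+(X,X)=g_-(X,X)\in\R$.

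The step I expect to be the main obstacle is the sign bookkeeping in the displayed gauge-fixing identity: one must check that the reversal $\phi_+^I=-I^\ast\phi_+$, forced by the minus sign in the definition of inversion symmetry, flips precisely the term $\pm2\ii\phi$ while leaving $\ast d^A\ast a$ and $[\Phi,\phi]$ untouched --- otherwise $(a_+^I,\phi_+^I)$ would again solve the ``$+$'' condition and the argument collapses. A secondary matter needing care is that the gauge transformation used to transport $(I^\ast a_+,-I^\ast\phi_+)$ back to $(A,\Phi)$ lies in, or can be adjusted by an element of, $G^f_{A,\Phi}$, so that the framed uniqueness statement genuinely applies.
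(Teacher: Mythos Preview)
Your proposal is correct and follows essentially the same route as the paper: pull back the gauge-fixed representative by the inversion, check that this flips the sign in \eqref{gauge fixing}, invoke the uniqueness clause of Theorem~\ref{thm:framed gauge fixing} to conclude $I^\ast a_\pm=a_\mp$ and $I^\ast\phi_\pm=-\phi_\mp$, and then use isometry-invariance of the $L^2$ integral together with \eqref{metric conjugation}. The concerns you flag at the end (the sign bookkeeping and the framed nature of the intervening gauge transformation) are exactly the points the paper leaves as ``straightforward to check''.
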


A similar argument shows that the restriction of $g_+$ to the normal bundle of the moduli space of inversion-symmetric monopoles is also real.  For normal vectors one has that $(a_\pm^I,\phi_\pm^I)$ is gauge equivalent to $(-a_\pm,-\phi_\pm)$ rather than $(a,\phi)$, and hence that $I^\ast a_\pm=-a_\mp$ and $I^\ast\phi_\pm=+\phi_\mp$, leading to the conclusion that $g_+(X,X)=g_-(X,X)$ for normal tangent vectors $X$.  In the particular case of charge 1, it is known that every monopole is inversion-symmetric about some point and that the moduli-space of inversion-symmetric monopoles about a given point is 0-dimensional.  Therefore every tangent vector is normal to some inversion-symmetric moduli space and thus $g_\pm$ is a real symmetric bilinear form on moduli space of 1-monopoles.  We will calculate this metric explicitly in section \ref{sec:calculating} and confirm that it is positive-definite.

Turning our attention to the unframed moduli spaces, Corollary \ref{cor:unframed gauge fixing} allows us to define a pair of complex-conjugate symmetric bilinear forms via the integral \eqref{g+-}.  Equivalently, these bilinear forms could be obtained from those on the framed moduli spaces by the quotient construction.  Again, we do not know whether these are real, but they are real when restricted to inversion-symmetric monopoles, and they are real and positive definite on the unframed moduli space of 1-monopoles as we show in Section \ref{sec:calculating}.

\begin{remark}
The reader may wonder whether the constant $\pm 2i$ in the gauge fixing condition \eqref{gauge fixing real} could be replaced by another constant.  The choice $\pm 2i$ is special because it is linked to supersymmetry and leads to special geometry, which will be described in Section \ref{sec:Killing}.  Other choices close to $\pm 2i$ might lead to convergent metrics, but those metrics would not be expected to have any special geometric structure.
\qed
\end{remark}

\begin{remark}
The reader may recall that the Coulomb gauge fixing condition $\ast d^A \ast a + [\Phi,\phi]=0$ used to define the euclidean monopole metric has a natural interpretation: it says that $(a,\phi)$ is orthogonal to infinitesimal gauge transformations.  We do not know of a comparable interpretation for the gauge fixing condition \eqref{gauge fixing}.
\qed
\end{remark}

\section{Existence for gauge fixing condition}
\label{sec:existence}
\subsection{Proof strategy}
The purpose of this section is to prove the existence part of Theorems \ref{thm:gauge fixing} and \ref{thm:framed gauge fixing}. In other words, to show that the gauge-fixing condition \eqref{gauge fixing} has a solution satisfying the required properties.

We begin by reformulating \eqref{gauge fixing} in a 4-dimensional way.  Let $g_H=d\theta^2+g_{H^3}$ be the metric on $S^1\times H^3$ and let $\ast_H$ be the associated Hodge star.  Let $B=\Phi\,d\theta+A$ be a circle-invariant connection on $S^1\times H^3$ and let $b=\phi\,d\theta+a$ be the deformation of $B$ associated to $(a,\phi)$.  Recall that both $B$ and $b$ extend to $S^4$.
Then, for the choice of orientation $d  \theta \wedge \operatorname{Vol} _{ H ^3 }$, the gauge fixing condition \eqref{gauge fixing}  is equivalent to  
\begin{equation}
\label{gauge fixing s4} 
\delta^\pm  b=0,
\end{equation} 
 where
\begin{equation}\label{Delta}
\delta^\pm :\Omega^1(\su(2))\to\Omega^0(\su(2)),\quad\delta^\pm b=e^{\mp 4i\theta}\ast_H d^B\ast_H e^{\pm 2i\theta}b.
\end{equation}
Here the factor of $e^{\mp 4i\theta}$ is included for later notational convenience. We need to find an infinitesimal gauge transformation $\chi $ on $S^4$ such that $b-d^B\chi$ solves 
\begin{equation}
\label{gaugefixing4d} 
\delta^\pm (b-d^B\chi)=0
\end{equation}
on $S^4\setminus S^2$.

The way we solve (\ref{gaugefixing4d}) is in two steps. First we find $\chi_1$ such that $\delta^\pm (b-d^B\chi_1)$ equals $0$ on $S^2\subset S^4$ and decays rapidly as one approaches $S^2$. We do this in section \ref{solutionons2} and our argument relies on the fact that $H ^3 \times S ^1 $ is conformally related to the K\"ahler manifold $H ^2 \times S ^2 $. The second step  is to use elliptic theory to find $\chi_2$ solving $\delta^\pm (b-d^B\chi_1-d^B\chi_2)=0$ on $S^4$. We do this in  section \ref{solutionons4}, where we also show that the solution of the gauge fixing condition that we have constructed satisfies the properties listed in Theorems \ref{thm:gauge fixing}, \ref{thm:framed gauge fixing}. In the reminder of this section we describe the aforementioned conformal equivalence $H ^3 \times S ^1  \simeq H ^2 \times S ^2 $.

We have already seen in \eqref{conformal S1H3} that the metric $g_H$ on $S^1\times H^3$ is conformally equivalent to metric $g_S$ on $S^4\setminus S^2$.  Recall that we model $S^4$ as the set of vectors $y\in \R^5$ satisfying $y_\mu y_\mu=1$.  The metric $g_S$ is the restriction of the euclidean metric $dy_\mu dy_\mu$ to $S^4$.  Introducing $r_2^2=y_3^2+y_4^2+y_5^2$, we note that
\begin{equation}\label{conformal H2S2}
\frac{1}{r_2^2}dy_\mu dy_\mu = \frac{1}{r_2^2}(dr_2^2+dy_1^2+dy_2^2) + \frac{1}{r_2^2}(dy_3^2+dy_4^2+dy_5^2)
\end{equation}
is the product metric on $H^3\times S^2$, with $r_2>0, y_1, y_2$ being coordinates on $H^3$ and $S^2$ consisting of unit vectors $(y_3,y_4,y_5)/r_2$.  The $S^4$ constraint $y_\mu y_\mu=1$ is equivalent to $r_2^2+y_1^2+y_2^2=1$, which determines a submanifold of $H^3$ that is isometric to $H^2$.  So $S^4\setminus S^1$ is conformal to $H^2\times S^2$, with $S^1\subset S^4$ defined by $r_2=0$.

We let $g_K$ denote the metric on $H^2\times S^2$.  It is convenient to write it using the disc model of $H^2$.  To do so we introduce a stereographic coordinate $w\in\mathbb{C}$ satisfying $|w|<1$, such that
\begin{equation}
(r_2,y_1,y_2)=\left(\frac{1-|w|^2}{1+|w|^2},\frac{\bar{w}+w}{1+|w|^2},\frac{i\bar{w}-i{w}}{1+|w|^2}\right).
\end{equation}
The metric \eqref{conformal H2S2} on $H^2\times S^2$ then becomes
\begin{equation}\label{metric H2S2}
g_K = \frac{2}{(1-|w|^2)^2}(dw\,d{\bar w}+d{\bar w}d{w}) + \frac{2}{(1+|z|^2)^2}(dz\,d{\bar z}+d{\bar z}d{z}),
\end{equation}
with $z\in\C$ being a stereographic coordinate on $S^2$.  It is important to note that $g_K$ is K\"ahler, with K\"ahler form
\begin{equation}
\omega_K = \frac{2i}{(1-|w|^2)^2}dw\wedge d{\bar w} + \frac{2i}{(1+|z|^2)^2}dz\wedge d{\bar z}.
\end{equation}

We now have three conformally equivalent metrics $g_H,g_S,g_K$.  Their common domain of definition is $S^4\setminus(S^2\sqcup S^1)\simeq S^1\times (H^3\setminus\{0\}) \simeq (H^2\setminus\{0\})\times S^2$, with $0$ denoting points in the interior of $H^3$ and $H^2$.  From \eqref{conformal S1H3} and \eqref{conformal H2S2} we have  $g_H=r_1^{-2}g_S$ and $g_K=r_2^{-2} g_S$.  Since $r_2=(1-|w|^2)/(1+|w|^2)$ and $r_1^2=1-r_2^2$, the metric $g_H$ of equation \eqref{conformal S1H3} and the spherical metric $g_S$ are related to $g_K$ by
\begin{equation}\label{conformal equivalences}
g_H = \left(\frac{1+|w|^2}{2|w|}\right)^2g_S = \left(\frac{1-|w|^2}{2|w|}\right)^2 g_K.
\end{equation}
We note that the argument of the coordinate $w$ on $H^2$ equals the coordinate $\theta$ on $S^1$:  If we define $s:=-\ln|w|>0$ then we find from \eqref{conformal equivalences} and \eqref{metric H2S2} that
\begin{equation}
g_H = d\theta^2 + ds^2 + \sinh^2(s)\frac{2}{(1+|z|^2)^2}(dz\,d\bar{z}+d\bar{z}\,dz).
\end{equation}
So $s$ is a geodesic radial coordinate on $H^3$, i.e.~it measures the geodesic distance from the origin.  Therefore we can write $w=e^{-s+i\theta}$, with $s$ being a radial coordinate on $H^3$ and $\theta$ being the coordinate on $S^1$.

\subsection{Solution on $S^2$}
\label{solutionons2} 
The aim of this section is to  find an infinitesimal gauge transformation $\chi$ such that
\begin{equation}
\label{holomorphic gauge intro}
(b- d^B\chi )^{0,1} = w^{2p}\sigma ,
\end{equation}
for $\sigma \in\Gamma(\Lambda^{0,1}(S^2)\otimes \End_0(V)) $, and such that
\begin{equation}
\frac{1}{w^3}\delta^- (b- d^B\chi )
\end{equation}
is a smooth function on $S^4$. In particular  $(b- d^B\chi )^{0,1}$ and $\delta^- (b- d^B\chi ) $ vanish on the 2-sphere fixed by the circle action and corresponding to $w =0 $.

To do that, we first notice that $b ^{ 0,1 }$  determines a cohomology class in $H^{0,1}(H^2\times S^2,\End_0(V))$, with the subscript 0 indicating that $b^{0,1}$ is traceless.
In fact, the Bogomolony equation for $(A,\Phi)$ is equivalent to the anti-self-dual equations for $B$, which are conformally invariant and, in terms of the K\"ahler structure of $H^2\times S^2$, take the form
\begin{equation}\label{ASD}
(F^B)^{0,2}=0=(F^B)^{2,0},\quad\omega_K\wedge F^B=0.
\end{equation}
Their linearisation is
\begin{equation}\label{linearised ASD}
\bar{\partial}^Bb ^{0,1}=0={\partial}^Bb^{1,0},\quad\omega_K\wedge d^Bb=0.
\end{equation}
Thus $B$ determines a holomorphic structure on the vector bundle $V$ over $H^2\times S^2$ and $b^{0,1}$ is a $\bar{\partial}^B$-closed section of $\End_0(V)\otimes\Lambda^{0,1}$, which in turn determines a cohomology class in $H^{0,1}(H^2\times S^2,\End_0(V))$.  Hence in order to find a gauge where (\ref{holomorphic gauge intro}) is satisfied  it is useful to first determine the cohomology group $H^{0,1}(H^2\times S^2,\End_0(V))$.

We can determine the cohomology of $\End_0(V)$ using a natural filtration of $V$, i.e.\ a holomorphic line bundle $L\subset V$.  Following \cite{atiyah:1987}, we show that:
\begin{lemma}\label{lemma:L}
There is a unique circle-invariant holomorphic line bundle $L\subset V$ over $H^2\times S^2$ such that the restriction of $L$ to $\{0\}\times S^2$ equals the line bundle $L^-$ that appears in the monopole boundary conditions.  The degree of this bundle is $-n$.
\end{lemma}
\begin{proof}
The circle action \eqref{action on S^4} on $H^2\times S^2$ is $(w,z)\mapsto (\lambda w,z)$, with $\lambda=e^{it}$.  This lifts to an action of $S^1$ on the bundle $\mathbb{P}V$.  The action fixes the holomorphic structure of $\mathbb{P}V$, because the holomorphic structure is determined by the connection $B$ and $B$ is circle-invariant.  So the circle action on $\mathbb{P}V$ can be extended to a holomorphic action of the monoid $\mathbb{C}^\ast_{\leq 1}=\{\lambda\in\mathbb{C}^\ast\::\:|\lambda|\leq1\}$ (here the constraint on the modulus of $\lambda$ is imposed in order maintain the condition that $|w|<1$).

A line subbundle of $V$ corresponds to a section of $\mathbb{P}V$.  The line subbundle $L^-$ determines a section of $\mathbb{P}V$ over $\{0\}\times S^2$.  Let $C$ be the image of this section.  Then $C$ is a submanifold of $\mathbb{P}V$ whose points are fixed by the action of $\mathbb{C}^\ast_{\leq 1}$.  We will show that the unstable submanifold  $C ^u $ of $C$, which consists of points in $\mathbb{P}V$ that converge to $C$ under the action of $\lambda$ as $\lambda\to 0$, is the image of a section of $\mathbb{P}V$.  This section determines a circle-invariant holomorphic line bundle $L\subset V$.

First we determine the weights for the action of $S^1$ on the tangent space $T_c\mathbb{P}V$ at a point $c\in C$.  This point $c$ is in the fibre of $\mathbb{P}V$ above a point $(0,z)\in H^2\times S^2$, so there is a natural map $T_c\mathbb{P}V\to T_{(0,z)}H^2\times S^2$.  The kernel of this map is the tangent space of $\mathbb{P}(V_{(0,z)})$ at  $c$.  The point $c\in\mathbb{P}(V_{(0,z)})$ corresponds to the line $L^-_{(0,z)}\subset V_{(0,z)}$, and $V_{(0,z)}/L^-_{(0,z)}\cong L^+_{(0,z)}$, so the tangent space at $c$ is naturally isomorphic to $\Hom(L^-_{(0,z)},L^+_{(0,z)})$.
Thus, we have an exact sequence
\begin{equation}\label{weight exact sequence}
0 \to \Hom(L^-_{(0,z)},L^+_{(0,z)}) \to T_c\mathbb{P}V\to T_0H^2\oplus T_z S^2.
\end{equation}
From proposition \eqref{prop:S1 action}, the circle group acts on the first space with weight $-4p$, and it acts on the final two spaces with weights $2$ and $0$.  So it acts on $T_c\mathbb{P}V$ with weights $-4p$, $0$ and $2$.

We see that there are two non-negative weights, 2 and 0.  So the unstable manifold $C^u$ has complex dimension 2.  The tangent space of $C^u$ at $c\in C$ is the union of the weight 2 and 0 subspaces of $T_c\mathbb{P}V$, and this maps surjectively to $T_0H^2\oplus T_zS^2$ in \eqref{weight exact sequence}.

We need to show that the submanifold $C^u\subset \mathbb{P}V$ is the image of a section of $\mathbb{P}V\to H^2\times S^2$.  The inverse function theorem shows that this is true in an neighbourhood $\{w<\epsilon\}$ of $\{0\}\times S^2$, because the tangent space at $c\in C$ maps surjectively to the tangent space of $H^2\times S^2$.  So we have a section $\sigma:\{w<\epsilon\}\times S^2\to \mathbb{P}(V)$ whose graph is part of $C^u$.  Using the action of $\lambda=\epsilon$, we can extend this to a section $\sigma(w,z)=\epsilon^{-1}\cdot\sigma_\epsilon(\epsilon w,z)$ of $\mathbb{P}V$ on the whole of $H^2\times S^2$.  The image of this section is the submanifold $C^u$.

So $C^u$ is the graph of a section of $\mathbb{P}V$.  It therefore determines a line bundle $L\subset V$.  The degree of $L$ equals the degree of its restriction to $S^2$, which is $-n$.
\end{proof}
\begin{remark}
The line bundle $L\subset V$ can alternatively be described using monopole scattering \cite{atiyah:1987}.  Fix a point $z_0\in H^2$; this determines a radial line in $H^3$.  Recall that Hitchin's scattering equation along this line is 
\begin{equation}\label{scattering}
\left(\frac{d}{ds}+A_s-i\Phi\right)\sigma=0,
\end{equation}
in which $\sigma$ is a section of $V$ over the line and $s=-\ln|w|$ is the geodesic radial coordinate on $H^3$.  Then the fibre of $L$ above $(w_0,z_0)\in H^2\setminus\{0\}\times S^2$ is the set of $\sigma_0\in V_{w_0,z_0}$ such that the solution of the scattering equation with initial condition $\sigma=\sigma_0$ at $s=s_0:=-\ln|w_0|$ decays as $s\to\infty$.

To explain why this characterisation of $L$ is equivalent to that of Lemma \ref{lemma:L}, we note that if $\sigma$ is a solution of \eqref{scattering} and  satisfies $\partial_\theta\sigma=0$, then it satisfies $(\partial_s^B-i\partial_\theta^B)\sigma=0$ for the connection $B=\Phi\,d\theta+A$.  Since $w=e^{-s+i\theta}$, this means that $\sigma$ determines a circle-invariant holomorphic section of $V$ over $H^2\setminus\{0\}\times \{z_0\}$.  If $\sigma$ decays as $s\to\infty$ then it must approach the eigenspace of $\Phi|_{ S^2_\infty}$ with eigenvalue $ip$.  But this is precisely the fibre of $L^-$ over $z_0\in S^2_\infty$.  So $\sigma$ must be a section of the circle-invariant bundle $L|_{H^2\setminus\{0\}\times\{z_0\}}$ that extends $L^-_{z_0}$.
\qed
\end{remark}

The filtration $0\subset L\subset V$ determines a filtration of $\End_0(V)$
\begin{equation}
0=F_0\subset F_1\subset F_2 \subset F_3=\mathrm{End}_0(V),
\end{equation}
in which
\begin{align}
F_1&=\{\chi \in \mathrm{End}_0(V)\::\: \chi(V)\subset L,\,\chi(L)=0\} \\
F_2&=\{\chi \in \mathrm{End}_0(V)\::\: \chi(L)\subset L\}.
\end{align}
Then $F_i$ has rank $i$ and $F_1$, $F_2/F_1$ and $F_3/F_2$ are line bundles of degree $-n$, $0$ and $n$.  They are isomorphic to the standard bundles $\mathcal{O}(-n)$, $\mathcal{O}$ and $\mathcal{O}(n)$ over $S^2$, pulled back to $H^2\times S^2$.  These isomorphisms can be chosen so that $U(1)$ acts as $w\mapsto\lambda w$ on $H^2$ and by multiplication with $\lambda^{2p}$, $1$ and $\lambda^{-2p}$ on the fibres of $\mathcal{O}(-n)$, $\mathcal{O}$ and $\mathcal{O}(n)$ respectively.

With this notation in place, we are able to compute the Dolbeault cohomology of $\mathrm{End}_0(V)$.  The result that we need is
\begin{lemma}
The natural maps
\begin{multline}
H^{0,0}(H^2)\otimes H^{0,1}(S^2,\mathcal{O}(-n))\to  H^{0,1}(H^2\times S^2,F_1)\\
\to H^{0,1}(H^2\times S^2,\End_0(V))
\end{multline}
are surjective.
\end{lemma}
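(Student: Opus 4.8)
The plan is to build the two surjections from the filtration $0\subset F_1\subset F_2\subset F_3=\End_0(V)$ by chasing long exact sequences in Dolbeault cohomology, using the K\"unneth formula and the fact that $H^2$ has vanishing higher cohomology. First I would establish the second surjection, $H^{0,1}(H^2\times S^2,F_1)\to H^{0,1}(H^2\times S^2,\End_0(V))$. Since $F_3/F_2$ and $F_2/F_1$ are pullbacks of $\mathcal{O}(n)$ and $\mathcal{O}$ from $S^2$, K\"unneth gives $H^{0,1}(H^2\times S^2,F_3/F_2)\cong H^{0,0}(H^2)\otimes H^{0,1}(S^2,\mathcal{O}(n))$ and similarly for $F_2/F_1$; both of these vanish because $H^{0,1}(S^2,\mathcal{O}(n))=0$ for $n\geq 0$ and $H^{0,1}(S^2,\mathcal{O})=0$. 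Feeding this into the long exact sequences of $0\to F_2\to F_3\to F_3/F_2\to 0$ and $0\to F_1\to F_2\to F_2/F_1\to 0$ shows successively that $H^{0,1}(H^2\times S^2,F_1)\to H^{0,1}(H^2\times S^2,F_2)$ and $H^{0,1}(H^2\times S^2,F_2)\to H^{0,1}(H^2\times S^2,F_3)$ are surjective, hence their composite is, which is exactly the second map in the lemma.

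Next I would handle the first surjection, $H^{0,0}(H^2)\otimes H^{0,1}(S^2,\mathcal{O}(-n))\to H^{0,1}(H^2\times S^2,F_1)$. Since $F_1$ is the pullback of $\mathcal{O}(-n)$ from $S^2$, the K\"unneth formula gives
\begin{equation}
H^{0,1}(H^2\times S^2,F_1)\cong \bigl(H^{0,0}(H^2)\otimes H^{0,1}(S^2,\mathcal{O}(-n))\bigr)\oplus\bigl(H^{0,1}(H^2)\otimes H^{0,0}(S^2,\mathcal{O}(-n))\bigr).
\end{equation}
The second summand vanishes because $H^{0,0}(S^2,\mathcal{O}(-n))=0$ for $n>0$ (and for $n=0$ the monopole is reducible, contradicting the earlier irreducibility discussion, so we may assume $n\geq 1$). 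Hence the natural map from the first factor is an isomorphism, in particular surjective.

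The main obstacle I anticipate is not the algebra of the exact sequences but the analytic subtlety hidden in invoking K\"unneth and the vanishing of cohomology on the noncompact factor $H^2$: $H^2$ is the disc $|w|<1$, so $H^{0,0}(H^2)$ is the (infinite-dimensional) space of all holomorphic functions, and one must be careful about which function spaces the Dolbeault groups are computed in and whether the relevant $\bar\partial$-operators have closed range. I would address this by noting that the objects of interest extend across the fixed $S^2\subset S^4$ — indeed the whole point of the conformal model $g_K$ is that $V$, $B$, and $b^{0,1}$ extend smoothly to the compact manifold $S^4$ — so the cohomology can be computed on a neighbourhood of the compact fixed locus (or, after a shrinking, on $\overline{H^2}\times S^2$ with suitable boundary behaviour), where elliptic theory applies and K\"unneth is valid. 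Alternatively one can argue directly: a class in $H^{0,1}(H^2\times S^2,F_1)$ is represented by a $\bar\partial$-closed $\End_0(V)$-valued $(0,1)$-form, and restricting to each slice $\{w\}\times S^2$ and using that $\bar\partial$ on $S^2$ with values in $\mathcal{O}(-n)$ has the Serre-dual obstruction one can solve fibrewise with holomorphic dependence on $w$, which realises any class as pulled back from the first K\"unneth factor up to an exact form. Once this analytic footing is secured, the surjectivity statements follow formally from the exact sequences above.
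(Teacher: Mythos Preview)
Your proposal is correct and follows essentially the same route as the paper: long exact sequences for the filtration $F_1\subset F_2\subset F_3$, combined with K\"unneth and the vanishing of $H^{0,1}$ on $S^2$ with values in $\mathcal{O}(n)$ and $\mathcal{O}$, give the second surjection, and K\"unneth for $F_1\cong\mathcal{O}(-n)$ gives the first. The only cosmetic differences are that the paper kills the second K\"unneth summand for $F_1$ via $H^{0,1}(H^2)=0$ rather than your $H^{0,0}(S^2,\mathcal{O}(-n))=0$, and the paper does not pause over the analytic issues you raise about K\"unneth on the noncompact factor---your added care there is a bonus, not a deviation.
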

\begin{proof}
By the K\"unneth formula,
\begin{multline}
H^{0,1}(H^2\times S^2, F_3/F_2) \cong
H^{0,1}(H^2\times S^2,\mathcal{O}(n)) \\
\cong H^{0,0}(H^2)\otimes H^{0,1}( S^2,\mathcal{O}(n)) \oplus H^{0,1}(H^2)\otimes H^{0,0}( S^2,\mathcal{O}(n)).
\end{multline}
This vanishes, because $H^{0,1}( S^2,\mathcal{O}(n))=0$ for any 
$n>0$ and $H^{0,1}(H^2)=0$.  Therefore the map
\begin{equation}
H^{0,1}(H^2\times S^2,F_2)\to H^{0,1}(H^2\times S^2,F_3)
\end{equation}
associated with the exact sequence $F_2\to F_3\to F_3/F_2$ is surjective.  By a similar argument based on the fact that $H^{0,1}(S^2,\mathcal{O})=0$, the map
\begin{equation}
H^{0,1}(H^2\times S^2,F_1)\to H^{0,1}(H^2\times S^2,F_2)
\end{equation}
associated with the exact sequence $F_1\to F_2\to F_2/F_1$ is surjective.  Applying the K\"unneth formula once more, we obtain a natural isomorphism
\begin{align}
H^{0,1}(H^2\times S^2, F_1) &\cong
H^{0,1}(H^2\times S^2,\mathcal{O}(-n)) \\
&= H^{0,0}(H^2)\otimes H^{0,1}( S^2,\mathcal{O}(-n))
\end{align}
since $H^{0,1}(H^2)=0$.
\end{proof}

Let now $\sigma_1,\sigma_2,\ldots$ be sections of $\Lambda^{0,1}(S^2)\otimes\mathcal{O}(-n)$ that represent a basis for $H^{0,1}(S^2,\mathcal{O}(-n))$.  By the preceding lemma  we can find an infinitesimal gauge transformation $\chi\in\Gamma(H^2\times S^2,\End_0(V))$ and holomorphic functions $f_i:H^2\to\C$ such that
\begin{equation}\label{fsigma}
{b}^{0,1}-\bar{\partial}^B\chi=\sum_i f_i\sigma_i.
\end{equation}
This can be done in such a way that ${b}-d^B\chi$ is still $U(1)$-invariant.  Recalling that $U(1)$ acts on $\mathcal{O}(-n)$ with weight $2p$, this means that
\begin{equation}
\sum_i f_i\sigma_i = \sum_i \lambda^{2p}f_i(\lambda^{-1}w)\sigma_i(u)\quad\forall \lambda \in U(1).
\end{equation}
Since the $\sigma_i$ are linearly independent, it follows that $\lambda^{2p}f_i(\lambda^{-1}w)=f_i(w)$ for each $i$.  The only holomorphic functions with this property are constant multiples of $w\mapsto w^{2p}$.  So the right hand side of \eqref{fsigma} is $w^{2p}\sigma$ for some linear combination $\sigma$ of the $\sigma_i$.  Therefore we are able to make an infinitesimal gauge transformation so that
\begin{equation}
\label{holomorphic gauge}
b^ {0,1}= w^{2p}\sigma ,\quad \sigma \in\Gamma(\Lambda^{0,1}(S^2)\otimes \End_0(V)),
\end{equation}
where, abusing notation, we have denoted by $b$ the gauge transformed deformation previously denoted by $b - d^B \chi $.

With this choice of gauge, $\delta^-b$ decays as $w\to 0$:
\begin{lemma}\label{lemma:smoothness}
Suppose that the gauge is chosen so that ${b}$ satisfies \eqref{holomorphic gauge} in a neighbourhood of $w=0$.  Then
\begin{equation}
\frac{1}{w^3}\delta^-b
\end{equation}
is a smooth function on $S^4$.
\end{lemma}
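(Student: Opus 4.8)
The plan is to compute $\delta^-b$ entirely in the K\"ahler frame $g_K$ of \eqref{metric H2S2}, where the conformal rescalings of \eqref{conformal equivalences} turn the $U(1)$-twisting built into \eqref{Delta} into multiplication by a genuinely \emph{holomorphic} function of $w$, and then simply to read off the order of vanishing at $w=0$. The first step is to establish the representation \eqref{codifferential} of $\delta^-$. Since $g_H = \bigl(\tfrac{1-|w|^2}{2|w|}\bigr)^2 g_K$ and the Hodge star of a conformally rescaled $4$-metric multiplies $1$-forms by the square of the conformal factor, $3$-forms by its reciprocal square, and $4$-forms by its reciprocal fourth power, the factors $e^{\pm 2i\theta}$ and $e^{\mp 4i\theta}$ in $\delta^-$ combine with the powers of $|w|$ from the conformal factor; using $|w|^2=w\bar w$ and $e^{2i\theta}=w/\bar w$ one checks that $e^{-2i\theta}\bigl(\tfrac{1-|w|^2}{2|w|}\bigr)^2=\bigl(\tfrac{1-|w|^2}{2w}\bigr)^2$ and $e^{4i\theta}\bigl(\tfrac{2|w|}{1-|w|^2}\bigr)^4=\bigl(\tfrac{2w}{1-|w|^2}\bigr)^4$, producing \eqref{codifferential}. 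This step is the crux of the argument: it is precisely the holomorphy of these prefactors, rather than mere smoothness in $|w|$, that converts the factor $w^{2p}$ in the gauge \eqref{holomorphic gauge} into a genuine gain of positive powers of $w$.

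Next I would expand $d^B\bigl(\bigl(\tfrac{1-|w|^2}{2w}\bigr)^2\ast_K b\bigr)$ by the Leibniz rule and use the identity $\ast_K(\alpha\wedge\ast_K b)=\iota_{\alpha^\sharp}b$ for a $1$-form $\alpha$ on a K\"ahler surface to arrive at \eqref{codifferentialcomputation}. In the term where $d$ hits the scalar prefactor, $d\log\bigl(\tfrac{1-|w|^2}{2w}\bigr)^2$ produces a $1$-form whose $dw$-part contracts against the $d\bar w$-component $b_{\bar w}$ of $b$; this vanishes in the gauge \eqref{holomorphic gauge} because $b^{0,1}=w^{2p}\sigma$ involves only $d\bar z$, not $d\bar w$, so only the $d\bar w$-part survives, contracting against the smooth coefficient $b_w$. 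Since the overall prefactor $\bigl(\tfrac{2w}{1-|w|^2}\bigr)^3$ equals $w^3$ times a function smooth and nonvanishing near $w=0$, this term is $w^3$ times a smooth function.

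For the remaining term $\bigl(\tfrac{2w}{1-|w|^2}\bigr)^2\ast_K d^B\ast_K b$, I would write $\ast_K b = i\omega_K\wedge(b^{0,1}-b^{1,0}) = \omega_K\wedge(2ib^{0,1}-ib)$, apply $d^B$, and kill the $\omega_K\wedge d^B b$ piece using $d\omega_K=0$ together with the $\omega$-trace part $\omega_K\wedge d^B b=0$ of the linearised anti-self-dual equations \eqref{linearised ASD}; this leaves $d^B\ast_K b = 2i\,\omega_K\wedge d^B b^{0,1}$. Substituting $b^{0,1}=w^{2p}\sigma$ and noting $\omega_K\wedge dw\wedge\sigma=0$ — the $dw\wedge d\bar w$ part of $\omega_K$ dies against $dw$, the $dz\wedge d\bar z$ part dies against the $d\bar z$ in $\sigma$ — one gets $d^B\ast_K b = 2iw^{2p}\,\omega_K\wedge d^B\sigma$, a smooth $4$-form times $w^{2p}$. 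Combined with the prefactor $\bigl(\tfrac{2w}{1-|w|^2}\bigr)^2 = w^2\times(\text{smooth})$ and the standing hypothesis $2p\in\mathbb{N}$, $p>0$ (so $2p\geq 1$), this term equals $w^{2+2p}\times(\text{smooth}) = w^3\times(\text{smooth near }w=0)$.

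Adding the two contributions, $\delta^-b = w^3\times(\text{smooth})$ on a neighbourhood of the fixed $S^2=\{w=0\}$; away from this $S^2$ one is on $S^1\times H^3 = S^4\setminus S^2$, where $\delta^-b$ is smooth and $w$ is nowhere zero, so $w^{-3}\delta^-b$ is smooth there as well. Hence $w^{-3}\delta^-b$ extends to a smooth function on all of $S^4$, as claimed. The only point needing attention beyond the conformal bookkeeping of the first step is that the objects appearing ($\omega_K$, $d^B\sigma$, the component $b_w$, and $1-|w|^2$) are genuinely smooth across $w=0$; this holds because the gauge transformation producing \eqref{holomorphic gauge} was chosen $U(1)$-invariantly and the relevant bundles extend over $S^4$.
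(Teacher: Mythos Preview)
Your argument is correct and follows the paper's own proof essentially step for step: the conformal rewriting \eqref{codifferential}, the Leibniz expansion \eqref{codifferentialcomputation}, the vanishing of the $dw$-contraction from $b_{\bar w}=0$, the reduction $d^B\ast_K b=2iw^{2p}\omega_K\wedge d^B\sigma$ via the linearised ASD equations and $d\omega_K=0$, and the power count using $2p\geq1$ all match. You supply a bit more explanation in places (e.g.\ why $\omega_K\wedge dw\wedge\sigma=0$), but there is no genuine difference in approach.
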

\begin{proof}
The operator $\delta^-$ of \eqref{Delta} can be rewritten using the second conformal equivalence of \eqref{conformal equivalences} as follows:
\begin{equation}\label{codifferential}
\delta^- = \left(\frac{2 w}{1-|w|^2}\right)^4\ast_K d^B \ast_K \left(\frac{1-|w|^2}{2w}\right)^2.
\end{equation}
If ${b}$ satisfies \eqref{holomorphic gauge} then this simplifies considerably.  To see why, note that \eqref{codifferential} implies that
\begin{equation}
\label{codifferentialcomputation} 
\delta^-b=\left(\frac{2 w}{1-|w|^2}\right)^2\ast_K d^B \ast_K {b} - 
\left(\frac{2w}{1-|w|^2}\right)^3 \left(\frac{dw}{w^2}+d\bar{w}\right)\lrcorner_K b.
\end{equation}
The contraction with $dw$ is $0$ in the gauge \eqref{holomorphic gauge} because $b_{\bar w}=0$ and $dw\lrcorner dw=0$.  So the second term is $w^3$ times a smooth function.  The first term can be simplified using the linearised anti-self-dual equations \eqref{linearised ASD} and the fact that $d\omega_K=0$:
\begin{equation} 
\begin{split} 
d^B\ast_K{b}&
=d^B (i\omega_K\wedge({b}^{0,1}-{b}^{1,0}))\\
&=d^B(\omega_K\wedge (2i{b}^{0,1}-i{b}))\\
&= 2i\omega_K\wedge d^B{b}^{0,1}\\
&=2i\omega_K\wedge (2pw^{2p-1}dw\wedge \sigma +w^{2p}d^B \sigma )\\
&=2iw^{2p}\omega_K\wedge d^B \sigma  .
\end{split} 
\end{equation} 
Since $2p\geq 1$, the first term in \eqref{codifferentialcomputation} is also $w^3$ times a smooth function.
\end{proof}

\subsection{Solution on $S^4$}
\label{solutionons4} 
Having found a gauge where \eqref{holomorphic gauge} holds, we now try to solve the gauge fixing condition (\ref{gauge fixing s4}) on $S ^4 $.
In other words, we seek a section $\chi$ of $\operatorname{End}_0(V)\to S^4$ such that
\begin{equation}\label{gf3}
\delta^-(b+d^B\chi)=0,
\end{equation}
where $\delta^-$ is given in \eqref{Delta}.  We rewrite (\ref{gf3})  using the following result
\begin{lemma}\label{lemma:Hodge}
Let $\triangle_S^B=-\ast_S d^B\ast_S d^B$ denote the (positive) Hodge laplacian associated with the connection $B$ over $S^4$.  For any section $\chi$ of $\operatorname{End}_0(V)$,
\begin{equation}
\delta^-d^B\chi = -\left(\frac{2w}{1+|w|^2}\right)^3(\triangle^B_S+2)\left(\frac{1+|w|^2}{2w}\right)\chi.
\end{equation}
\end{lemma}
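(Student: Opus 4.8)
\emph{Proof sketch.}

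The plan is to transfer the whole identity onto the round metric $g_S$ of $S^4$, where the right-hand side becomes a holomorphic rescaling of the conformal (Yamabe) Laplacian, and then to verify it by two applications of a Leibniz rule. Throughout write $\zeta:=\tfrac{2w}{1+|w|^2}$; by the coordinate formulae of this section $\zeta=y_1+\ii y_2=r_1e^{\ii\theta}$, so $|\zeta|=r_1$ and the relations \eqref{conformal equivalences} read $g_S=|\zeta|^2g_H$.

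First I would re-express $\delta^-$ using $\ast_S$. Since the Hodge star rescales as $\ast_S=r_1^{\,4-2k}\ast_H$ on $k$-forms in dimension four, and since $e^{4\ii\theta}r_1^{4}=\zeta^{4}$ while $e^{-2\ii\theta}r_1^{-2}=\zeta^{-2}$, the definition \eqref{Delta} becomes
\[
\delta^-b=\zeta^{4}\,\ast_S d^B\!\bigl(\zeta^{-2}\ast_S b\bigr)
\]
(one can equally well start from \eqref{codifferential} and use the conformal relation between $g_K$ and $g_S$). Taking $b=d^B\chi$, using $d^B(f\omega)=df\wedge\omega+f\,d^B\omega$ with $d(\zeta^{-2})=-2\zeta^{-3}d\zeta$, the pointwise identity $\ast_S(\alpha\wedge\ast_S\beta)=\langle\alpha,\beta\rangle$ for $1$-forms (extended $\C$-bilinearly, with $\langle\cdot,\cdot\rangle$ the $g_S$-inner product of $1$-forms), and the definition $\triangle_S^B=-\ast_S d^B\ast_S d^B$, a short computation gives
\[
\delta^-d^B\chi=-\zeta^{2}\,\triangle_S^B\chi-2\zeta\,\langle d\zeta,\,d^B\chi\rangle .
\]

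The only step with geometric content is the scalar identity $\triangle_S\,\zeta^{-1}=-2\,\zeta^{-1}$ on $S^4\setminus S^2$. I would deduce this either by noting that $\zeta^{-1}=(y_1+\ii y_2)^{-1}$ is the restriction to $S^4$ of a function on $\R^5\setminus\R^3$ that is harmonic and homogeneous of degree $-1$, hence a $\triangle_{S^4}$-eigenfunction with eigenvalue $(-1)(-1+3)=-2$; or, more in the spirit of the section, from conformal covariance of the Yamabe operator, which equals $\triangle_{g_H}-1$ on $S^1\times H^3$ and $\triangle_S+2$ on $S^4$ and which annihilates $r_1\zeta^{-1}=e^{-\ii\theta}$ because $\triangle_{g_H}e^{-\ii\theta}=e^{-\ii\theta}$. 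This is precisely what produces the additive constant $+2$ in the statement.

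Finally I would expand the right-hand side of the lemma. By the Leibniz rule for the connection Laplacian, $\triangle_S^B(\zeta^{-1}\chi)=(\triangle_S\zeta^{-1})\chi+\zeta^{-1}\triangle_S^B\chi-2\langle d\zeta^{-1},d^B\chi\rangle$; substituting $\triangle_S\zeta^{-1}=-2\zeta^{-1}$ and $d\zeta^{-1}=-\zeta^{-2}d\zeta$, the terms proportional to $\zeta^{-1}\chi$ cancel against the $+2$, and
\[
-\zeta^{3}\bigl(\triangle_S^B+2\bigr)\bigl(\zeta^{-1}\chi\bigr)=-\zeta^{3}\Bigl(\zeta^{-1}\triangle_S^B\chi+2\zeta^{-2}\langle d\zeta,d^B\chi\rangle\Bigr)=-\zeta^{2}\triangle_S^B\chi-2\zeta\,\langle d\zeta,d^B\chi\rangle ,
\]
which is exactly the expression found for $\delta^-d^B\chi$; since $\zeta^{3}=8w^{3}/(1+|w|^2)^{3}$ and $\zeta^{-1}=(1+|w|^2)/(2w)$ this is the claim verbatim. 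The only thing one has to be careful about is bookkeeping --- tracking the conformal weights and the powers of $\zeta$ in the first step, and keeping the sign conventions for $\ast$, $d^{*}$ and the positive Laplacian straight; there is no analytic subtlety, and the lemma really just records that, modulo the holomorphic conformal factor $\zeta$, the operator $\delta^-d^B$ \emph{is} the gauged Yamabe operator $\triangle_S^B+2$ of $S^4$.
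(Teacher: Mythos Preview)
Your proof is correct and follows essentially the same route as the paper's: rewrite $\delta^-$ in terms of $\ast_S$ via the conformal factor, expand both sides by the Leibniz rule, and match using the scalar eigenvalue identity $\triangle_S\zeta^{-1}=-2\zeta^{-1}$. The only noteworthy difference is that you \emph{explain} this eigenvalue identity (as the restriction of a harmonic, degree $-1$ homogeneous function on $\R^5$, or via conformal covariance of the Yamabe operator), whereas the paper simply states it as a direct calculation; your framing makes the appearance of the constant $+2$ more transparent.
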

\begin{proof}
The first conformal equivalence of \eqref{conformal equivalences} implies that
\begin{equation}\label{eq1}
\delta^-d^B\chi = \left(\frac{2w}{1+|w|^2}\right)^4\ast_S d^B \ast_S \left(\frac{1+|w|^2}{2w}\right)^2 d^B\chi.
\end{equation}
The statement of the lemma is therefore equivalent to
\begin{equation}\label{identity}
\ast _Sd^B \ast _S\left(\frac{1+|w|^2}{2w}\right)^2 d^B\chi = 
-\left(\frac{1+|w|^2}{2w}\right)(\triangle^B_S+2)\left(\frac{1+|w|^2}{2w}\right)\chi.
\end{equation}
We prove the identity \eqref{identity} by expanding both sides.  The left hand side is clearly equal to
\begin{equation}
-\left( \frac{1+|w|^2}{2w}\right)^2\triangle_S^B\chi + 2\left( \frac{1+|w|^2}{2w}\right)\ast_S\left(d\left(\frac{1+|w|^2}{2w}\right)\wedge\ast_S \, d^B\chi\right).
\end{equation}
The right hand side equals
\begin{multline}
-   \left( \frac{1+|w|^2}{2w}\right)^2\triangle_S^B\chi + 2\left( \frac{1+|w|^2}{2w}\right)\ast_S\left(d\left(\frac{1+|w|^2}{2w}\right)\wedge\ast_S \, d^B\chi\right)\\
- \left( \frac{1+|w|^2}{2w}\right)\chi (\triangle_S+2)\left( \frac{1+|w|^2}{2w}\right).
\end{multline}
A direct calculation shows that
\begin{equation}
    \triangle_S\left( \frac{1+|w|^2}{2w}\right)=-2\left( \frac{1+|w|^2}{2w}\right),
\end{equation}
so the two sides of the identity agree.
\end{proof}
Lemma \ref{lemma:Hodge} reduces the gauge fixing condition \eqref{gf3} to
\begin{equation}\label{poisson equation}
(\triangle_S^B+2)\left(\frac{1+|w|^2}{2w}\right)\chi = \left(\frac{1+|w|^2}{2w}\right)^3\delta^-b.
\end{equation}
Lemma \ref{lemma:smoothness} shows that, since $b$ satisfies \eqref{holomorphic gauge}, the right hand side is a smooth function near $S^2\subset S^4$.  Standard theory then guarantees that \eqref{poisson equation} admits a unique smooth solution.  In more detail, the operator $\triangle_S^B+2$ is a self-adjoint second order elliptic operator.  Its kernel is 0, because
\begin{equation}
(\triangle^B_S+2)f=0 \quad\implies\quad \int_{S^4} \left( |d^Bf|^2+2|f|^2\right)  \mathrm{Vol} _{ S  } =0 \quad\implies\quad f=0.
\end{equation}
By the Fredholm alternative, \eqref{poisson equation} admits a unique solution $\chi$.  By uniqueness, this solution is circle-invariant.  So $b_-=b+d^B\chi$ solves the gauge-fixing condition \eqref{gf3} and is circle-invariant.  It gives a solution $(a_-,\phi_-)$ of the gauge-fixing condition \eqref{gauge fixing} with a minus sign.  

To complete the proof of the existence part of Theorems \ref{thm:gauge fixing}, \ref{thm:framed gauge fixing} we still need to show that  $a _- $ is a section of $\Lambda ^{ 1,0 }$ over $S ^2 _\infty $ and that the $L ^2 $ norm of $(a _- , \phi _- )$ is finite. Let us start with the  behaviour over $S ^2 _\infty $.
\begin{lemma}\label{lemma:BC b}
The solution $(a_-,\phi_-)$ of \eqref{gauge fixing} in Theorems \ref{thm:gauge fixing} and \ref{thm:framed gauge fixing} is such that the (0,1)-part of $b_-=a_-+\phi_-\,d\theta$ is equal to $w$ times a smooth section near $S^2\subset S^4$.
\end{lemma}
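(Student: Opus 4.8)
The argument immediately preceding the statement already establishes the conclusion for the solution $(a_-,\phi_-)$ of Theorem~\ref{thm:gauge fixing}: in the gauge \eqref{holomorphic gauge} one has $b^{0,1}=w^{2p}\sigma$ with $2p\ge 1$; equation \eqref{poisson equation} together with elliptic regularity forces $\bigl(\tfrac{1+|w|^2}{2w}\bigr)\chi$ to be smooth near $w=0$, hence $\chi$ to be $w$ times a smooth section; and since $w$ is holomorphic, $\bar\partial^B\chi$, and therefore $(b_-)^{0,1}=b^{0,1}+\bar\partial^B\chi$, is $w$ times a smooth section. It remains to treat the solution of Theorem~\ref{thm:framed gauge fixing}, which differs from the unframed one by the replacement $(a_-,\phi_-)\mapsto(a_--c\,d^A\Phi,\phi_-)$ for a constant $c$; in the four-dimensional language this replaces $b_-$ by $b_--c\,d^B\Phi$, where $d^B\Phi$ is the smooth $\End_0(V)$-valued $1$-form on $S^4$ representing the gauge transformation generated by $\Phi$. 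So it suffices to show that $(d^B\Phi)^{0,1}$ is $w$ times a smooth section near $S^2\subset S^4$.

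I would deduce this from circle-invariance of $B$. Regarding $\Phi$ as a section of $\End_0(V)$ over $S^4$ --- equivalently as the infinitesimal generator of the circle action on the fibres of $V$, which on the fixed $S^2$ acts as $\mathrm{diag}(-\ii p,\ii p)$ on $L^+\oplus L^-$ in agreement with the boundary condition on $\Phi$ --- Cartan's formula applied to $\mathcal{L}_{\partial_\theta}B=0$ in a gauge where the circle acts trivially on fibres gives, after using $\iota_{\partial_\theta}(B\wedge B)=[\Phi,B]$, the identity
\[
d^B\Phi=-\,\iota_{\partial_\theta}F^B .
\]
Both sides transform by conjugation under gauge transformations, so this is an identity of $\End_0(V)$-valued $1$-forms on $S^4$. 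In the coordinates $(w,z)$ the circle acts by $w\mapsto e^{\ii t}w$, so $\partial_\theta=\ii(w\partial_w-\bar w\partial_{\bar w})$, whence
\[
d^B\Phi=-\ii\,w\,\iota_{\partial_w}F^B+\ii\,\bar w\,\iota_{\partial_{\bar w}}F^B .
\]
By \eqref{ASD}, $F^B$ is a primitive $(1,1)$-form valued in $\End_0(V)$, so $\iota_{\partial_w}F^B$ is of type $(0,1)$ while $\iota_{\partial_{\bar w}}F^B$ is of type $(1,0)$; taking $(0,1)$-parts gives
\[
(d^B\Phi)^{0,1}=-\ii\,w\,\iota_{\partial_w}F^B .
\]
Since $F^B$ is smooth on $S^4$ and $\partial_w$ is a smooth coordinate vector field near the fixed $S^2$, the right-hand side is $w$ times a smooth $\Lambda^{0,1}$-valued section, and therefore so is $(b_--c\,d^B\Phi)^{0,1}$. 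Together with the first paragraph this proves the lemma.

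The one point that needs care is the identity $d^B\Phi=-\iota_{\partial_\theta}F^B$: one must use that $\Phi$ enters here as a section of $\End_0(V)$ over all of $S^4$ --- the object measuring the vertical part of the lift of $\partial_\theta$ to $V$ for the equivariant connection $B$ --- rather than as the component $\iota_{\partial_\theta}B$ read off in the singular $S^1\times H^3$ trivialisation, and one must check that the sign and the numerical constant come out as stated. With that identity in hand, the type count for interior products with an anti-self-dual curvature, together with the smoothness assertions, are routine.
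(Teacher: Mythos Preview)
Your proof is correct and follows essentially the same route as the paper. The only difference is cosmetic: you derive the identity $d^B\Phi=-\iota_{\partial_\theta}F^B$ via Cartan's formula and circle-invariance of $B$, whereas the paper obtains the equivalent identity $d^A\Phi=-\iota_{\partial_\theta}F^B$ directly from the decomposition $F^B=F^A+d^A\Phi\wedge d\theta$ on $S^1\times H^3$; from that point on, both arguments use $\partial_\theta=\ii(w\partial_w-\bar w\partial_{\bar w})$ and the fact that $F^B$ is of type $(1,1)$ in exactly the same way.
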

\begin{proof}
From \eqref{holomorphic gauge} we see that $b^{0,1}$ is equal to $w$ times a smooth section near $w=0$.  From \eqref{poisson equation} we see that $\chi$ is also equal to $w$ times a smooth section near $w=0$.  Since $w$ is a holomorphic coordinate, the $(0,1)$ part of $d^B\chi$ is also equal to $w$ times a smooth section, and so the $(0,1) $ part of $b_-=b + d^B\chi$ equals $w$ times a smooth section. This proves the Lemma in the case of Theorem \ref{thm:gauge fixing}.  For Theorem \ref{thm:framed gauge fixing} we  need in addition to show that the (0,1)-part of $d^A\Phi$ is $w$ times a smooth section.  The connection $B=A+\Phi\,d\theta$ has curvature $F^B=F^A+d^A\Phi\wedge d\theta$, so
\begin{equation}
d^A\Phi = -\frac{\partial}{\partial\theta}\lrcorner F^B = -iw\frac{\partial}{\partial w}\lrcorner F^B + i\bar{w}{\frac{\partial}{\partial \bar{w}}}\lrcorner F^B.
\end{equation}
The anti-self-dual equations \eqref{ASD} imply that $F^B$ is of type (1,1), so the (0,1)-part of $d^A\Phi$ is $-i{w}{\frac{\partial}{\partial {w}}}\lrcorner F^B$.  Thus $d^A\Phi$ is $w$ times a smooth section.
\end{proof}


Finally, we show that the solution $(a_-,\phi_-)$ of the gauge fixing condition \eqref{gauge fixing} has finite $L ^2 $ norm. The $L ^2 $  integral is proportional to
\begin{align}\label{integral 1}
-\int_{S^4\setminus S^2}\tr(b_-\wedge\ast_H b_-) = -\int_{S^4\setminus S^2}\left(\frac{1+|w|^2}{2|w|}\right)^2\tr(b_-\wedge\ast_S b_-).
\end{align}
The integrand is continuous away from $w=0$, so the integral will be finite as long as the integral in a neighbourhood of $0$ is finite.  The integral on a neighbourhood $0<|w|<\epsilon$ can be written using the K\"ahler metric \eqref{metric H2S2} as follows:
\begin{equation}\label{integral 2}
-\int_{0<|w|<\epsilon}\tr(b_-\wedge\ast_H b_-) = -2\int_{0<|w|<\epsilon}\left(\frac{1-|w|^2}{2|w|}\right)^2 \tr(b_-^{1,0}\wedge\ast_K b_-^{0,1}).
\end{equation}
By Lemma \ref{lemma:BC b}, $b_-^{0,1}$ is $w$ times a smooth section, so the integrand is $dw\wedge d\bar{w}/\bar{w}$ times the volume form of $S^2$ times a function that extends smoothly over $w=0$.  Since $\int_{0<|w|<\epsilon}dw\wedge d\bar{w}/|w|$ converges, the integral \eqref{integral 2} converges.  This completes the proof of Theorem \ref{thm:gauge fixing}.

\section{Killing spinors and pluricomplex structures}
\label{sec:Killing}
The gauge fixing condition \eqref{gauge fixing} was motivated by a connection with Killing spinors \cite{figueroa-ofarrill:2014}.  We will present a precise formulation of this correspondence below, and show that it induces various geometric structures on the framed moduli space.  We then show that these geometric structures are compatible with the symmetric bilinear forms $g_\pm$.

\subsection{Killing spinors and Dirac operators}

A Killing spinor on a riemannian manifold $M$ with a spin structure is a section $\psi$ of the spinor bundle $S$ that solves
\begin{equation}
\nabla_X\psi = \lambda X\cdot\psi
\end{equation}
for all tangent vectors $X$.  Here $\lambda\in\C$ is known as the Killing constant of $\psi$, and the dot denotes Clifford multiplication.

To relate this to monopoles, let us suppose that $M$ is three-dimensional.  Then the Clifford algebra admits two inequivalent irreducible representations; we choose the one in which the volume form acts as the identity, and this choice determines the spinor bundle  $S$. We recall that $S$  is a complex rank 2 bundle admitting a non-degenerate skew-symmetric pairing $(\cdot,\cdot)$. Such a pairing induces an isomorphism $S\cong S^\ast$, written as $s\mapsto s^\ast$, such that the pairing is $(s,t)=s^\ast(t)$.  In a suitable local frame the map $s \mapsto s^\ast $  takes the form
\begin{equation}
\begin{pmatrix}s_1\\s_2\end{pmatrix}^\ast = \begin{pmatrix}-s_2 & s_1\end{pmatrix},
\end{equation}
corresponding to the skew-symmetric pairing 
\begin{equation}
\label{sympl pairing} 
(s,t ) = \begin{pmatrix}s_1 & s_2\end{pmatrix} \begin{pmatrix}
0 & 1 \\
-1 & 0
\end{pmatrix} \begin{pmatrix}
t _1  \\
t _2 
\end{pmatrix} = s _1 t _2 - s _2 t _1 .
\end{equation} 
The skew-symmetric pairing is compatible with Clifford multiplication in the sense that $(s,v\cdot t)=-(v\cdot s,t)$ and $(v\cdot s)^\ast=-s^\ast\cdot v$ for all $v\in\Lambda^1$.

Clifford multiplication induces an isomorphism
\begin{equation}\label{clifford isomorphism}
\cl_H: (\Lambda^0\oplus\Lambda^1)\otimes\C \to \End(S)\cong S\otimes S^\ast\cong S\otimes S
\end{equation}
such that $(a  + \phi ) \cdot \psi=\cl_H (a,\phi ) \psi $ for all  $a \in \Lambda ^1 , \phi \in \Lambda ^0 , \psi \in S $.
Under this isomorphism $\Lambda^1$ corresponds to the traceless endomorphisms $\End_0(S)$ and $\Lambda^0$ corresponds to the identity component.

Three-dimensional hyperbolic space $H^3$ admits Killing spinors with Killing constant $\pm\frac{i}{2}$.  The space $K^\pm$ of Killing spinors with Killing constant $\pm\frac{i}{2}$ is two-dimensional \cite{fujii:1986}.  Since the spinor bundle $S$ is  two-dimensional and Killing spinors are determined by their value at a point,  Killing spinors trivialise $S$.  We thus have the following natural  isomorphisms of vector bundles, for $p \in H ^3 $, $\psi ^\pm \in K ^\pm $,
\begin{equation}
K^{\pm}\times H^3 \cong S, \quad (\psi ^\pm , p ) \mapsto \psi ^\pm (p) .
\end{equation}
It follows that
\begin{equation}
\Gamma(\Lambda^0\oplus\Lambda^1)\otimes\C \cong \Gamma(S)\otimes K^\pm\label{spinor isomorphism}.
\end{equation}
More explicitly, this means that given a pair of independent Killing spinors $\psi_1,\psi_2$ in $K^-$,  any section $(a,\phi)$ of $(\Lambda^1\oplus\Lambda^0)\otimes\C$ can be written in the form $(a,\phi)=\cl_H^{-1}(\nu_1\otimes\psi_1^\ast +\nu_2\otimes\psi_2^\ast)$ for a unique pair of sections $\nu_1,\nu_2$ of $S$ (and similarly for $K^+$).

\begin{proposition}\label{prop:Killing}
Let $(A,\Phi)$ be a hyperbolic monopole and let $D^A:\Gamma(S\otimes\End(V))\to\Gamma(S\otimes\End(V))$ be the Dirac operator associated to $A$.  Under the isomorphism \eqref{spinor isomorphism}, we have the identity
\begin{equation}
\left(-D^A+\ad(\Phi)\pm\frac{i}{2}\right)\otimes\mathrm{id}_{K^\pm} = 
\begin{pmatrix} * d^A+\ad(\Phi) & -d^A \\ * d^A* & \ad(\Phi)\pm 2i \end{pmatrix},
\end{equation}
in which the operator on the right is written with respect to the splitting $\Lambda^1\oplus\Lambda^0$.
\end{proposition}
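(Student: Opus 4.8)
The plan is to verify the operator identity pointwise on $H^3$, reducing it to a short Clifford-algebra computation together with the Killing spinor equation. First I would strip away the isomorphism \eqref{spinor isomorphism}. For a Killing spinor $\psi\in K^\pm$, contracting the $K^\pm$-factor of a section of $S\otimes\End(V)\otimes K^\pm$ against $\psi$ using the (parallel, non-degenerate) symplectic pairing is $\C$-linear and does not involve the $S\otimes\End(V)$ factor, so it commutes with $(-D^A+\ad(\Phi)\pm\tfrac i2)\otimes\mathrm{id}_{K^\pm}$; and by the defining property of \eqref{spinor isomorphism} (spelled out in the paragraph after it, $a+\phi=\sum_j\nu_j\otimes\psi_j^*$), this contraction sends $(a,\phi)$ to $\cl_H(a,\phi)\,\psi$. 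Since a section of $(\,\cdot\,)\otimes K^\pm$ is determined by its contractions against a basis of $K^\pm$, it suffices to prove that for every $\psi\in K^\pm$,
\[
\bigl(-D^A+\ad(\Phi)\pm\tfrac i2\bigr)\bigl(\cl_H(a,\phi)\,\psi\bigr)=\cl_H\bigl(\mathcal M(a,\phi)\bigr)\,\psi ,
\]
where $\mathcal M$ is the $2\times2$ matrix operator in the statement and $\cl_H$ has been extended $\End(V)$-linearly.

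Next I would expand the left-hand side. Write $M=\cl_H(a,\phi)$ and $\gamma_i=\cl_H(e^i,0)$ for a local orthonormal coframe, so that $\gamma_i\gamma_j=-\delta_{ij}-\epsilon_{ijk}\gamma_k$ (the last relation using that the volume form acts as the identity). The product rule together with $\nabla_X\psi=\pm\tfrac i2\,X\cdot\psi$ gives
\[
D^A(M\psi)=\Bigl(\textstyle\sum_i\gamma_i\,\nabla^A_{e_i}M\Bigr)\psi\ \pm\ \tfrac i2\Bigl(\textstyle\sum_i\gamma_i M\gamma_i\Bigr)\psi .
\]
Two elementary facts then finish it. First, the three-dimensional identity $\sum_{i=1}^3\gamma_i M\gamma_i=M-2\tr(M)\,\mathrm{id}$, combined with $\tr\cl_H(a,\phi)=2\phi$, gives $\sum_i\gamma_i M\gamma_i=M-4\phi\,\mathrm{id}$; consequently in $-D^A(M\psi)$ the $M$-proportional term is $\mp\tfrac i2 M$, which cancels exactly against the explicit $\pm\tfrac i2 M$, leaving precisely $\pm 2i\phi\,\mathrm{id}$ — this is the mechanism by which the Killing constant $\pm\tfrac i2$ produces the $\pm 2i$ in the lower-right entry. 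Second, since Clifford multiplication is parallel for the Levi-Civita spin connection and $\ad(\Phi)$ is $\End(V)$-linear, $\cl_H$ intertwines $\nabla^A$ and commutes with $\ad(\Phi)$; evaluating $\sum_i\gamma_i\,\nabla^A_{e_i}\cl_H(a,\phi)$ at a point in a geodesic normal frame and using $\gamma_i\gamma_j=-\delta_{ij}-\epsilon_{ijk}\gamma_k$ with the coframe formulas $d^A\phi=\sum_i(\nabla^A_{e_i}\phi)e^i$, $\ast d^A a=\sum_k\bigl(\sum_{ij}\epsilon_{ijk}\nabla^A_{e_i}a_j\bigr)e^k$ and $\ast d^A\ast a=\sum_i\nabla^A_{e_i}a_i$ yields $\sum_i\gamma_i\,\nabla^A_{e_i}\cl_H(a,\phi)=\cl_H(d^A\phi-\ast d^A a,\,-\ast d^A\ast a)$. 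Putting the pieces together, and using $\ad(\Phi)\cl_H(a,\phi)=\cl_H([\Phi,a],[\Phi,\phi])$, gives
\[
\bigl(-D^A+\ad(\Phi)\pm\tfrac i2\bigr)(M\psi)=\cl_H\bigl((\ast d^A+\ad(\Phi))a-d^A\phi,\ \ast d^A\ast a+(\ad(\Phi)\pm 2i)\phi\bigr)\,\psi ,
\]
which is the required right-hand side.

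The individual steps are short, so the main obstacle is entirely one of conventions: I would need to fix once and for all the Clifford sign convention ($v\cdot v=-|v|^2$, with the volume form acting as the identity), the orientation and Hodge-star conventions (so that $\ast d^A\ast$ is $+\,\mathrm{div}$ rather than $-\,\mathrm{div}$ on $1$-forms in three dimensions), and the precise form of the pairing $\psi\mapsto\psi^*$ entering \eqref{spinor isomorphism}, and then carry them consistently through — a single sign slip, for instance between the $-\ast d^A\ast a$ appearing above and the $\ast d^A\ast$ in the lower-left entry, would break the match. As a consistency check, the first component of $\mathcal M(a,\phi)$ is exactly the left-hand side of the linearised Bogomolny equation \eqref{linearised bog_eq}, so the proposition says that $(a,\phi)$ solves \eqref{linearised bog_eq} together with the gauge-fixing condition \eqref{gauge fixing} precisely when its image under \eqref{spinor isomorphism} lies in $\ker\bigl(-D^A+\ad(\Phi)\pm\tfrac i2\bigr)$.
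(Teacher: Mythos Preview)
Your proof is correct and follows essentially the same route as the paper's: Leibniz rule on $M\psi$, the Killing spinor equation to produce the $\sum_i\gamma_iM\gamma_i$ term, the identity $\sum_i\gamma_iM\gamma_i=M-2\tr_S(M)$ with $\tr_S\cl_H(a,\phi)=2\phi$, and a direct Clifford computation of $\sum_i\gamma_i\nabla^A_i\cl_H(a,\phi)$ in an orthonormal frame. The only cosmetic difference is that the paper works directly with $\nu_\alpha\otimes\psi_\alpha^\ast$ in $\End(S)$ rather than first contracting against a fixed $\psi\in K^\pm$, but the computations are line-for-line the same.
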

\begin{proof}
Choose an orthonormal frame $e^i$ for the cotangent bundle, and choose a basis $\psi_1,\psi_2$ for $K^\pm$.  To simplify notation, write $\lambda=\pm\frac{i}{2}$.  Then for any sections $\nu_1,\nu_2$ of $S\otimes\End(V)$,
\begin{align}
e^i\cdot\nabla^A_i(\nu_\alpha\otimes\psi^\ast_\alpha) &= e^i\cdot(\nabla^A_i\nu_\alpha)\otimes\psi^\ast_\alpha + \lambda e^i\cdot\nu_\alpha\otimes(e^i\cdot\psi_\alpha)^\ast \\
&=(D^A\nu_\alpha)\otimes\psi_\alpha^\ast - \lambda e^i\cdot \nu_\alpha\otimes\psi_\alpha^\ast \cdot e^i \\
&= (D^A\nu_\alpha-\lambda\nu_\alpha)\otimes\psi_\alpha^\ast + 2\lambda\tr_S(\nu_\alpha\otimes\psi_\alpha^\ast),
\end{align}
in which we used the identity $e^i\cdot x\cdot e^i = x-2\tr_S(x)$ for $x\in \mathrm{End}(S)$.  Writing $\nu_\alpha\otimes\psi ^\ast _\alpha = a+\phi$ for sections $a,\phi$ of $\Lambda^1\otimes\End(V)$ and $\Lambda^0\otimes\End(V)$, it follows that
\begin{equation}
(-D^A\nu_\alpha+[\Phi,\nu_\alpha]+\lambda\nu_\alpha)\otimes\psi_\alpha^\ast = -e^i\cdot\nabla^A_i(a+\phi)+[\Phi,a+\phi]+4\lambda\phi,\label{killing proof 1}
\end{equation}
having used $\operatorname{Tr}_S  (\nu _\alpha \otimes \psi ^\ast  _\alpha )= \operatorname{Tr} _S(  \phi I _2 + a ^i e _i \cdot  ) = 2  \phi $.
By direct calculation,
\begin{equation}
-e^i\cdot\nabla^A_i({\phi}+a) = -e^i\nabla_i^A{\phi} - e^i\wedge \nabla_i^Aa +e^i\lrcorner \nabla_i^A{a} = -d^A{\phi} - d^A{a} + * d^A* {a}.\label{killing proof 2}
\end{equation}
Since we have chosen the  Clifford algebra representation in which the volume form $\mathrm{Vol}$ acts as multiplication by 1,
\begin{equation}
-d^A{a} = -\mathrm{Vol}\cdot d^A{a} = * d^A{a}.\label{killing proof 3}
\end{equation}
The result follows from equations \eqref{killing proof 1}, \eqref{killing proof 2} and \eqref{killing proof 3}.
\end{proof}
\subsection{Eigenspinors and tangent vectors}
Proposition \ref{prop:Killing} shows that tangent vectors $(a,\phi)$ to the moduli space of hyperbolic monopoles solving \eqref{linearised bog_eq}, \eqref{gauge fixing} give rise to eigenspinors $\nu$ solving
\begin{equation}\label{eigenspinors}
D^A\nu -[\Phi,\nu]=\pm\frac{i}{2}\nu.
\end{equation}
We would like to establish a precise relationship between tangent vectors and eigenspinors.  To do this we must impose boundary conditions on the solutions $\nu$ of \eqref{eigenspinors}.  We will do so by lifting the spinors to $S^4$.

Recall that the spinor bundle of $S^1\times H^3$ takes the form $S_+\oplus S_-$, in which $S_\pm$ are eigenspaces of the action of the volume form by Clifford multiplication with eigenvalues $\mp1$.  This means that self-dual two-forms act by zero on $S_-$, while anti-self-dual 2-forms act by zero on $S_+$.  Both $S_+$ and $S_-$ are isomorphic to the pull-back  of the spinor bundle $S$ of $H^3$.  The Dirac operator associated to the connection $\Phi\,d\theta+A$ splits as the sum of an operator $D^B_H:\Gamma(S^1\times H^3,S_-\otimes\End_0(V))\to \Gamma(S^1\times H^3,S_+\otimes\End_0(V))$ and its adjoint. We focus on $D^B_H $, which takes the form 
\begin{equation}
D^B_H = D^A-\frac{\partial}{\partial\theta}-\ad(\Phi).
\end{equation}
So the eigenspinor equation \eqref{eigenspinors} is equivalent to
\begin{equation}
D^B_H (e^{\pm \frac{i\theta}{2}}\nu )= 0.
\end{equation}
Recall that the metrics $g_S$ of $S^4$ and $g_H$ of $S^1\times H^3$ are conformally equivalent \eqref{conformal equivalences}.  It follows that their Dirac operators are related by 
\begin{equation}
D^B_H=\left(\frac{2|w|}{1+|w|^2}\right)^{\frac52} D^B_S \left(\frac{2|w|}{1+|w|^2}\right)^{-\frac32}.
\end{equation}
For $ x \in M ^f _{ n,p }$ we define $E_x^\pm$ to be  the space of solutions of \eqref{eigenspinors} such that
\begin{equation}\label{tilde nu}
\tilde{\nu}:= \left(\frac{2|w|}{1+|w|^2}\right)^{-\frac32}  \nu e^{\pm \frac{i\theta}{2}}
\end{equation}
extends smoothly to $S^4$.  Then $E_x^\pm$ is naturally isomorphic to the kernel of $D^B_S:\Gamma(S^4,S_-\otimes\End_0(V))\to\Gamma(S^4,S_+\otimes\End_0(V))$.

By construction, the spinor $\tilde{\nu}$ transforms with weight $\pm  1/2$ under the circle action.  This fact allows us to determine the dimension of $E_x^\pm$ using the equivariant index theorem:
\begin{proposition}
The complex vector spaces $E_x^\pm$ have dimension $2n$.
\end{proposition}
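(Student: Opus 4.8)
The plan is to compute $\dim_{\mathbb{C}} E_x^\pm$ via the equivariant index theorem for the Dirac operator $D^B_S$ on $S^4$ twisted by $\operatorname{End}_0(V)$, extracting the weight-$\pm\tfrac12$ part of the equivariant index. First I would observe that by elliptic regularity and the kernel identification preceding the proposition, $E_x^\pm$ is isomorphic to the weight-$(\pm\tfrac12)$ subspace of $\ker D^B_S \subset \Gamma(S^4, S_-\otimes\operatorname{End}_0(V))$, where the circle acts on $S^4$ by the rotation fixing the $S^2$, lifted to $V$ (or its double cover) with the weights $\mp p$ on $L^\pm$ recorded in \eqref{S1 action}. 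The cokernel $\ker (D^B_S)^*$ vanishes: since $B$ is an instanton, a Weitzenböck argument (the self-dual part of $F^B$ is zero, and the scalar curvature of the round $S^4$ is positive) forces $\Gamma(S^4, S_+\otimes\operatorname{End}_0(V))\cap\ker (D^B_S)^* = 0$. Hence the full equivariant index equals the equivariant dimension of $\ker D^B_S$, and I only need to isolate its weight-$\pm\tfrac12$ component.

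Next I would evaluate the equivariant index by the Atiyah--Singer--Segal fixed-point formula. The circle action on $S^4$ has fixed-point set exactly the distinguished $S^2$ (the set $r_1=0$). The fixed-point contribution is an integral over this $S^2$ of the equivariant $\hat A$-class of the normal bundle times the equivariant Chern character of $S_-\otimes\operatorname{End}_0(V)$ restricted to $S^2$, with the $t$-dependence of $V|_{S^2}=L^+\oplus L^-$ governed by the weights $\mp p$ and the Chern numbers $\pm n$. Writing $\lambda=e^{it/2}$ as in the circle-action discussion, one gets a meromorphic function of $\lambda$ whose Laurent expansion encodes the dimensions of all weight spaces; I would expand and read off the coefficients of $\lambda^{\pm 1}$. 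The bundle $\operatorname{End}_0(V)|_{S^2}$ splits (via the filtration $F_\bullet$ already used in Section~\ref{sec:existence}) as $\mathcal{O}(-2n)\oplus\mathcal{O}\oplus\mathcal{O}(2n)$ with circle-weights $+4p$, $0$, $-4p$ relative to the $w\mapsto\lambda^2 w$ normalisation, so the computation reduces to three standard $S^2$ integrals of the form $\int_{S^2} e^{m x}\cdot(\text{equivariant normal class})$, which are elementary.

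An alternative, and perhaps cleaner, route I would pursue in parallel is to avoid the fixed-point formula altogether and instead compute directly on $S^4$, or rather on $H^2\times S^2$, using the conformal picture from Section~\ref{sec:existence}: solutions of \eqref{eigenspinors} correspond, via the Dolbeault isomorphism for the twisted Dirac operator on a Kähler surface, to elements of $H^{0,\bullet}(H^2\times S^2, \operatorname{End}_0(V)\otimes K^{1/2})$ with the appropriate weight, and the $H^{0,1}$-vanishing computed in the lemma of Section~\ref{sec:existence} together with Serre duality and Riemann--Roch on $S^2$ (fibered over $H^2$) would again give $2n$. In this approach one uses that $H^{0,0}(H^2)$ is "one-dimensional per weight" in the relevant sense, reducing everything to Euler characteristics of $\mathcal{O}(\pm 2n)$, $\mathcal{O}$ on $\mathbb{P}^1$, and the net count is $h^0(\mathcal{O}(2n)) - h^1(\mathcal{O}(-2n)) \pm \text{(corrections)} = 2n$.

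The main obstacle I anticipate is bookkeeping rather than conceptual: correctly matching the half-integer weights coming from the $e^{\pm i\theta/2}$ twist and the $S_{\pm}$ spinor weights with the $\lambda^{\mp p}$ action on $V$ (complicated by the fact, noted in the text, that for $2p$ odd one is really working with a double cover of the circle), and ensuring the orientation and conformal-weight factors in \eqref{tilde nu} are tracked so that the fixed-point contribution is expanded about the correct value ($\lambda\to 0$ versus $\lambda\to\infty$) to extract the physically relevant weight. I would pin these down by checking the answer against the known charge-$1$ case, where $\dim E_x^\pm$ should be $2$, consistent with $\dim M^f_{n,p}=4n$ once the decomposition \eqref{decomposition intro} with $\dim_{\mathbb{C}} K^\pm = 2$ is taken into account.
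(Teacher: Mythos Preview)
Your primary approach is essentially the paper's: vanishing of the cokernel via the standard Weitzenb\"ock argument for anti-self-dual connections on $S^4$, followed by the equivariant (Atiyah--Segal--Singer) fixed-point formula over the fixed $S^2$, with the equivariant Chern character of $\End_0(V)|_{S^2}$ computed from the splitting into line bundles of degrees $\pm 2n,0$ and $U(1)$-weights $\mp 4p,0$, then reading off the coefficient of $\gamma^{\pm1}$. The paper carries this through to the closed form $\mathrm{ind}_{U(1)}(\gamma,D^B_S)=2n\sum_{j=0}^{4p-1}\gamma^{2j+1-4p}$, from which each weight space visibly has dimension $2n$; your Dolbeault alternative on $H^2\times S^2$ is not pursued there and would need care with the non-compactness of $H^2$.
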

\begin{proof}
Let $U(1)$ be the double-cover of the circle group acting on $S^4$.  For $\gamma\in U(1)$, we define 
\begin{equation}\label{index definition}
\text{ind}_{U(1)}(\gamma,D^B_S) = \tr(\gamma,\ker D^B_S) - \tr(\gamma,\coker D^B_S).
\end{equation}
The equivariant index theorem says that
\begin{equation}
\text{ind}_{U(1)}(\gamma,D^B_S) = \frac{1}{2\pi i}\int_{S^2} \frac{\hat{A}(S^2)\ \text{ch}_{U(1)}(\gamma,\End_0(V))}{\text{ch}_{U(1)}(\gamma,S^+(NS^2))-\text{ch}_{U(1)}(\gamma,S^-(NS^2))}.
\end{equation}
Here $S(NS^2)=S^+(NS^2)\oplus S^-(NS^2)$ is the spinor bundle associated with the normal bundle of $S^2\subset S^4$.  The equivariant Chern character of a bundle $W\to S^2$ is simply
\begin{equation}
\text{ch}_{U(1)}(\gamma,W):= \tr_E(\gamma\cdot \exp(-F)),
\end{equation}
in which $F$ is the curvature of a connection on $W$.  The $\hat{A}$ genus of $S^2$ is given in terms of its curvature tensor $R$ simply by
\begin{equation}
\hat{A}(S^2)=\left(\det\frac{R/2}{\sinh R/2}\right)^{\frac{1}{2}}=1-\frac{1}{48}\tr_{TS^2}(R^2) +\ldots = 1.
\end{equation}

To calculate the equivariant Chern character, recall that $V\to S^2$ is isomorphic to $L^+\oplus L^-$ with $L^-=(L^+)^\ast$, and so the bundle $\End_0(V)\to S^2$ is isomorphic to $(L^+)^{\otimes 2}\oplus \C \oplus (L^-)^{\otimes 2}$.  The degrees of these bundles are $2n,0,-2n$ and their weights under the $U(1)$ action are $-4p,0,4p$.  We choose a connection on $L^+$ with curvature $-i\omega/2$, where $\omega$ is the area form on $S^2$ whose integral is $4\pi$.  Then
\begin{equation}
\text{ch}_{U(1)}(\gamma,\End_0(V)) = \gamma^{-4p}+1+\gamma^{4p}+in\omega(\gamma^{4p}-\gamma^{-4p}).
\end{equation}

The normal bundle $N S^2$ is topologically trivial, and so $S^\pm (NS^2)$ are also trivial.  The action of the circle group on $NS^2$ has weights $\pm1$, and the action of its double cover $U(1)$ has weights $\pm2$, so the action of $U(1)$ on the positive and negative spinor bundles has weights $\pm1$.  Thus:
\begin{equation}
\text{ch}_{U(1)}(\gamma,S^\pm(NS^2))=\gamma^{\pm 1}.
\end{equation}
Altogether, we have that
\begin{equation}
\text{ind}_{U(1)}(\gamma,D^B_S) = \frac{n}{2\pi}\int_{S^2}\frac{(\gamma^{4p}-\gamma^{-4p})}{\gamma-\gamma^{-1}}\omega 
= 2n\sum_{j=0}^{4p-1}\gamma^{2j+1-4p}.\label{index calculation}
\end{equation}

The spaces $E_x^\pm$ are the weight spaces in $\ker D^B_S$ with weight $\pm 1$ under the double cover $U(1)$ of the circle group.  Since the connection $B$ is anti-self-dual, a well-known argument shows that the cokernel of $D^{B}_S$ is trivial \cite{atiyah:1978}.  Therefore the equivariant index \eqref{index definition} is equal to the character of the representation $\ker(D^{B}_S)$ of $U(1)$.  The dimensions of the weight spaces $E^\pm_x$ are the coefficients $\gamma^{\pm1}$ in this polynomial, and from the calculation \eqref{index calculation} they are both equal to $2n$ as claimed.
\end{proof}

We now aim to relate $E_x^\pm$ to the tangent space of the framed monopole moduli space.  Let $\nu\in E_x^\pm$ and let $\psi\in K^\pm$.  Then $\nu\otimes\psi$ gives a section $(a_\pm ,\phi_\pm)$ of $(\Lambda^1H^3\oplus\Lambda^0H^3)\otimes\End_0(V)$ via the isomorphism \eqref{clifford isomorphism}.  Proposition \ref{prop:Killing} shows that $(a_\pm,\phi_\pm)$ solves the linearised Bogomolny equation \eqref{linearised bog_eq} and the gauge-fixing condition \eqref{gauge fixing}.  To show that this is a tangent vector to the moduli space we must show the following:
\begin{proposition}\label{prop:b- extends}
If $\nu\in E_x^\pm$, $\psi\in K^\pm$ and $(a_\pm,\phi_\pm):=\cl_H^{-1}(\nu\otimes\psi)$, then $b_\pm:=\phi_\pm\,d\theta+a_\pm$ extends over $S^2$ to give a smooth section of $\Lambda^1S^4\otimes\End_0(V)$.
\end{proposition}
\begin{proof}
For notational simplicity we prove the proposition only for $b_-$.  The proof for $b_+$ is similar.  Using the Clifford isomorphism
\begin{equation}\label{clifford isomorphism sphere}
\cl_S:\Lambda^1 S^4 \to \Hom(S_+,S_-)\cong S_-\otimes S_+^\ast \cong S_-\otimes S_+
\end{equation}
induced by the metric $g_S$ on $S^4$, $b_-$ can be identified with the section $\cl_S(b_-)$ of $S_-\otimes S_+\otimes \End_0(V)$, defined away from $S^2\subset S^4$.  The Clifford isomorphisms \eqref{clifford isomorphism}, \eqref{clifford isomorphism sphere} are related by
\begin{equation}\label{two clifford isomorphisms}
\cl_S =\left(  \frac{1+|w|^2}{2|w|}\right)  \cl_H,
\end{equation}
in which we have implicitly identified $\Lambda^1(S^4\setminus S^2)$ with $\Lambda^1(H^3)\oplus \Lambda^0(H^3)$ and identified $S$ with $S_+$ and $S_-$.  So $\cl_S(b_-)$ is given by
\begin{equation}\label{b nu psi}
\cl_S(b_-)=\left(  \frac{1+|w|^2}{2|w|}\right)  \cl_H(a_-,\phi_-)=\left( \frac{1+|w|^2}{2|w|} \right) \nu\otimes\psi,
\end{equation}
in which $\nu$ and $\psi$ are interpreted as sections of $S_-\otimes\End_0(V)$ and $S_+$.  The section \eqref{b nu psi} can be written $\tilde{\nu}\otimes\tilde\psi$, with $\tilde{\nu}$ given in \eqref{tilde nu} and
\begin{equation}\label{tilde psi}
\tilde{\psi}:=\left(\frac{2|w|}{1+|w|^2}\right)^{\frac12}e^{\frac{i\theta}{2}}\psi.
\end{equation}
Since $\tilde{\nu}$ is a smooth section over $S^4$, $b$ will extend smoothly over $S^4$ if $\tilde{\psi}$ extends to a smooth section of $S_+\to S^4$. The simplest way to show that $\tilde{\psi}$ extends smoothly is by explicitly solving the Killing spinor equation.

We do so in the upper half-space model of $H^3$.  Recall that $S^4$ is the set of unit vectors $y\in\mathbb{R}^5$.  We define coordinates $\theta,\rho>0,x,y$ on $S^4\setminus S^2\simeq S^1\times H^3$ as follows:
\begin{equation}
(y_1,y_2,y_3,y_4,y_5)=\frac{1}{1 + \rho^2+x^2+y^2}(2\rho\cos\theta,2\rho\sin\theta,2x,2y,1-\rho^2-x^2-y^2).
\end{equation}
Then the conformal factor in \eqref{conformal equivalences} is $2|w|/(1+|w|^2)=r_1=2\rho/ (1+\rho^2+x^2+y^2)$.  The hyperbolic metric and volume form are
\begin{equation}
g = \frac{1}{\rho^2}(d\rho^2+dx^2+dy^2),\quad \mathrm{Vol}=-\frac{1}{\rho^3}d\rho\wedge dx\wedge dy.
\end{equation}
We choose a frame for the spinor bundle $S\to H^3$ so that the Clifford isomorphism \eqref{clifford isomorphism} is given by Pauli matrices:
\begin{equation}\label{Clifford representation}
\cl_H:\frac{d\rho}{\rho}\mapsto - i\sigma_3,\quad\frac{dx}{\rho}\mapsto - i\sigma_1,\quad\frac{dy}{\rho}\mapsto - i\sigma_2.
\end{equation}
This ensures that $\cl_H(\mathrm{Vol})$ is the identity matrix $I_2$.  Then the Levi-Civita connection on the spinor bundle is
\begin{equation}\label{Levi Civita connection}
\nabla = d + \frac{dx}{2\rho}i\sigma_2 - \frac{dy}{2\rho}i\sigma_1.
\end{equation}

A quick proof that this is the Levi-Civita connection is as follows: we compute
\begin{equation}
[\nabla,i\sigma_1]=\frac{dx}{\rho}i\sigma_3,\quad
[\nabla,i\sigma_2]=\frac{dy}{\rho}i\sigma_3,\quad
[\nabla,i\sigma_3]=-\frac{dx}{\rho}i\sigma_1-\frac{dy}{\rho}i\sigma_2.
\end{equation}
So
\begin{equation}
\nabla\frac{dx}{\rho}=\frac{dx}{\rho}\otimes\frac{d\rho}{\rho},\quad
\nabla\frac{dy}{\rho}=\frac{dy}{\rho}\otimes\frac{d\rho}{\rho},\quad
\nabla\frac{d\rho}{\rho}=-\frac{dx}{\rho}\otimes\frac{dx}{\rho}-\frac{dy}{\rho}\otimes\frac{dy}{\rho}.
\end{equation}
It is straightforward to check that this connection on $\Lambda^1( \mathbb{H}^3)$ is orthogonal.  It is torsion-free because $\nabla\wedge v=dv$ for $v=dx/\rho,dy/\rho,d\rho/\rho$.  It is straightforward to check that 
\begin{equation}\label{explicit Killing spinors}
\psi_1=\rho^{-\frac12}\begin{pmatrix}1\\0\end{pmatrix},\quad\psi_2=\rho^{-\frac12}\begin{pmatrix} -x+iy \\ \rho\end{pmatrix}
\end{equation}
are Killing spinors with Killing constant $-\frac{\ii}{2}$ which form a basis for $K^-$.

Now we aim to write $\psi_\alpha$ as a spinors on $S^4\setminus S^2$.  We identify $S^1\times H^3\simeq S^4\setminus S^2$ with $\R^4\setminus\R^2$.  The map $S^1\times H^3\to \R^4\setminus\R^2$ is given by $(\theta,\rho,x,y)\mapsto(\rho\cos\theta,\rho\sin\theta,x,y)$.  The map $S^4\setminus S^2\to\R^4\setminus\R^2$ is stereographic projection.  The metric $g_S$ is then $4/(1+\rho^2+x^2+y^2)^2$ times the euclidean metric.  We can choose frames for $S_+,S_-\to S^4$ on the whole of $\R^4$ such that the Clifford isomorphism \eqref{clifford isomorphism sphere} is given by
\begin{equation}\label{Clifford representation sphere}
\begin{aligned}
\cl_S:\frac{2d(\rho\cos\theta)}{1+\rho^2+x^2+y^2}&\mapsto - i\sigma_3, & 
\frac{2d(\rho\sin\theta)}{1+\rho^2+x^2+y^2}&\mapsto I_2,\\
\frac{2dx}{1+\rho^2+x^2+y^2}&\mapsto - i\sigma_1, &
\frac{2dy}{1+\rho^2+x^2+y^2}&\mapsto - i\sigma_2.
\end{aligned}
\end{equation}
Comparing \eqref{Clifford representation sphere} with \eqref{Clifford representation} and the identification $\cl_H(d\theta)=\cl_H(1)=I_2$, we see that
\begin{equation}
\label{R4 clifford isomorphisms}
\cl_S(\cdot) = \frac{2\rho}{1+\rho^2+x^2+y^2}  \exp(\tfrac{i\theta}{2}\sigma_3) \cl_H(\cdot) \exp(\tfrac{i\theta}{2}\sigma_3)
\end{equation}
in our chosen representations.  Let us compare this with the earlier expression \eqref{two clifford isomorphisms} relating $\cl_S$ and $\cl_H$.  The coefficient $2\rho/(1+\rho^2+x^2+y^2)$ is the conformal factor relating $g_S$ and $g_H$ and is equal to the coefficent $2|w|/(1+|w|^2)$ in \eqref{two clifford isomorphisms}.  The main differences between \eqref{R4 clifford isomorphisms} and \eqref{two clifford isomorphisms} are the two factors of $\exp(\frac{i\theta}{2}\sigma_3)$.  These appear because in \eqref{R4 clifford isomorphisms} we have chosen different frames for $S_+$, $S_-$ and $S$, whereas in \eqref{two clifford isomorphisms} we used a canonical identification $S_+\cong S\cong S_-$ over $S^4\setminus S^2$ to choose a common frame for all three bundles.  The Killing spinors $\psi_\alpha$ in \eqref{explicit Killing spinors} are written in the frame \eqref{Clifford representation} for $S$, and to write them in the frame \eqref{Clifford representation sphere} for $S_+$ we must multiply those expressions with $\exp(-\frac{i\theta}{2}\sigma_3)$.  So the expression for $\eqref{tilde psi}$ in our chosen frame for $S_+$ is
\begin{equation}
\label{spincartesianframe}
\tilde{\psi_1}=\left(\frac{4}{1+\rho^2+x^2+y^2}\right)^{\frac14} \begin{pmatrix}1\\0\end{pmatrix},\,
\tilde{\psi_2}=\left(\frac{4}{1+\rho^2+x^2+y^2}\right)^{\frac14} \begin{pmatrix}-x+iy\\\rho e^{ i\theta}\end{pmatrix}.
\end{equation}
These are smooth functions on $\R^4$ and they are written in a frame that extends over $\R^2\subset\R^4$, so we conclude that the spinors $\tilde{\psi}_\alpha$ extend smoothly over $\R^2\subset\R^4$.  
Repeating the calculation in a different coordinate patch shows that $\tilde{\psi_\alpha}$ extend smoothly over $S^2\subset S^4$.
\end{proof}

Proposition \ref{prop:b- extends} shows that there are natural maps
\begin{equation}\label{tangent space decomposition}
E_x^\pm\otimes K^\pm\to T_x^\C M^f_{n,p}.
\end{equation}
We now wish to show that:
\begin{theorem}\label{thm:decomposition}
The maps given in \eqref{tangent space decomposition} are isomorphisms.
\end{theorem}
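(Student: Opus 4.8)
The plan is to leverage the discussion preceding the statement, where surjectivity of each map $E_x^\pm\otimes K^\pm\to T_x^\C M^f_{n,p}$ has effectively already been proved, and to promote surjectivity to bijectivity by a dimension count. First I would record that the maps are well-defined $\C$-linear maps: for $\nu\in E_x^\pm$ and $\psi\in K^\pm$, Proposition \ref{prop:Killing} shows that $(a,\phi)=\cl_H^{-1}(\nu\otimes\psi^\ast)$ solves the linearised Bogomolny equation \eqref{linearised bog_eq} and the gauge-fixing condition \eqref{gauge fixing}, while equations \eqref{tilde nu}, \eqref{tilde psi} and \eqref{spincartesianframe} exhibit $b=a+\phi\,d\theta=\tilde\nu\otimes\tilde\psi$ as a smooth section over $S^2\subset S^4$; hence $(a,\phi)$ represents a class in $T_x^\C M^f_{n,p}$, and the assignment is bilinear in $(\nu,\psi)$, so it descends to the tensor product.

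Next I would reproduce the surjectivity argument already given: by Theorem \ref{thm:framed gauge fixing} any class in $T_x^\C M^f_{n,p}$ is represented by a pair $(a_-,\phi_-)$ solving \eqref{gauge fixing}; choosing a basis $\psi_1,\psi_2$ of $K^-$ with $(\psi_1,\psi_2)=1$ one writes $\cl_H(a_-,\phi_-)=\nu_\alpha\otimes\psi_\alpha^\ast$ with $\nu_\alpha=\epsilon_{\alpha\beta}\cl_H(a_-,\phi_-)\psi_\beta$, and then Proposition \ref{prop:Killing} shows that $\nu_\alpha$ solves \eqref{eigenspinors} while Lemma \ref{lemma:BC b} together with the $O(w)$ estimates around \eqref{inverse map 3} shows that $\tilde\nu_\alpha$ extends smoothly over $S^2$, so $\nu_\alpha\in E_x^-$. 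Thus the chosen class is the image of $\sum_\alpha\nu_\alpha\otimes\psi_\alpha\in E_x^-\otimes K^-$. The ``$+$'' case is obtained verbatim with $K^+$ in place of $K^-$, or deduced from the conjugation relation $(a_+,\phi_+)=(-a_-^\dagger,-\phi_-^\dagger)$.

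To finish I would compare dimensions. We have $\dim_\C K^\pm=2$ and, by the Proposition preceding this theorem, $\dim_\C E_x^\pm=2n$, so the domain has complex dimension $4n$. On the other hand $M^u_{n,p}$ has real dimension $4n-1$ and the bijection \eqref{trivialisation of Mf} identifies $M^f_{n,p}$ with $\R\times M^u_{n,p}$, whence $\dim_\C T_x^\C M^f_{n,p}=4n$ as well. A surjective $\C$-linear map between complex vector spaces of equal finite dimension is an isomorphism, which is the claim.

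The only genuinely analytic step, and the main obstacle, is the smoothness of $\tilde\nu_\alpha$ \emph{across} the fixed $S^2\subset S^4$ (as opposed to merely on $S^4\setminus S^2$): this needs the boundary estimate $b_-^{0,1}=O(w)$ of Lemma \ref{lemma:BC b}, the explicit local forms of the Killing spinors and of $\cl_S$ near $S^2$, and a repetition of the estimate in a second coordinate chart. Everything else --- linearity, the decomposition via the symplectic pairing on $K^-$, and the dimension count --- is formal. One could instead argue injectivity directly: if $\nu_\alpha\otimes\psi_\alpha^\ast$ is pure framed gauge then by the uniqueness clause of Theorem \ref{thm:framed gauge fixing} the gauge-fixed representative must vanish, forcing $\nu_\alpha=0$; but given that surjectivity and equality of dimensions are already available, the dimension count is the shortest route.
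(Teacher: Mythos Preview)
Your proposal is correct and follows essentially the same route as the paper: the paper establishes surjectivity exactly as you describe (writing a gauge-fixed representative as $\nu_\alpha\otimes\psi_\alpha^\ast$ and verifying $\nu_\alpha\in E_x^-$ via the $O(w)$ estimate), and then concludes by the same dimension count, noting that domain and target both have complex dimension $4n$. Your closing remark about a direct injectivity argument via the uniqueness clause of Theorem~\ref{thm:framed gauge fixing} is also apt---the paper uses precisely that argument immediately after the theorem, but in the reverse direction, to deduce the uniqueness part of Theorem~\ref{thm:framed gauge fixing} from the injectivity of \eqref{tangent space decomposition}.
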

\begin{proof}
We begin by considering surjectivity.  Let $Z_x^-$ denote the space of solutions $(a_-,\phi_-)$ of \eqref{linearised bog_eq}, \eqref{gauge fixing} such that $b_-=\phi_-\,d\theta+a_-$ extends smoothly to $S^4$.  Then \eqref{tangent space decomposition} is a composition of maps
\begin{equation}\label{EKZT}
E_x^-\otimes K^-\to Z_x^-\to T_x^\C M^f_{n,p}.
\end{equation}
Theorem \ref{thm:framed gauge fixing} shows that the map $Z_x^-\to T_x^\C M^f_{n,p}$ is surjective.  We now show that the map $E_x^-\otimes K^-\to Z_x^-$ is surjective.

Let $(a_-,\phi_-)\in Z_x^-$.  We wish to write this in the form
\begin{equation}\label{inverse map 1}
\cl_H(a_-,\phi_-)=\nu_1\otimes\psi_1^\ast+\nu_2\otimes\psi_2^\ast
\end{equation}
for some eigenspinors $\nu_i\in E_x^-$ and Killing spinors $\psi_i\in K^-$.  We begin by choosing a pair of linearly independent Killing spinors.  The Killing spinor equation implies that the function $(\psi_1,\psi_2)$ is constant; without loss of generality we can assume that this constant is 1 (it is certainly 1 for the choice made in \eqref{explicit Killing spinors}).  Then $\psi_2\otimes\psi_1^\ast-\psi_1\otimes\psi_2^\ast\in \Gamma(S\otimes S^\ast)$ corresponds to the identity map acting on $S$.  So \eqref{inverse map 1} holds with
\begin{equation}\label{inverse map 2}
\nu_\alpha=\epsilon_{\alpha\beta}\cl_H(a_-,\phi_-) \psi_\beta,
\end{equation}
where $\epsilon_{\alpha\beta}$ is totally antisymmetric and $\epsilon_{12}=1$.  By Proposition \ref{prop:Killing}, $\nu_\alpha$ solve the eigenspinor equation \eqref{eigenspinors}, so to confirm that they belong to $E_x^-$ we just need to check that $\tilde{\nu}_\alpha$ extend smoothly across $S^2\subset S^4$.

Let $\tilde{\psi}_\alpha$ be constructed from $\psi_\alpha$ as in \eqref{tilde psi}.  From equations \eqref{tilde nu}, \eqref{two clifford isomorphisms} and \eqref{tilde psi},
\begin{align}
\tilde{\nu}_\alpha
&= \epsilon_{\alpha\beta}\left(\frac{2|w|}{1+|w|^2}\right)^{-\frac32}e^{-\frac{i\theta}{2}}\frac{2|w|}{1+|w|^2}\cl_S(b_-)\left(\frac{2|w|}{1+|w|^2}\right)^{-\frac12}e^{-\frac{i\theta}{2}}\tilde{\psi}_\beta \\
&= \left( \frac{1+|w|^2}{w}\right) \epsilon_{\alpha\beta}\cl_S(b_-)\tilde{\psi}_\beta.\label{inverse map 3}
\end{align}
In order to show that $\tilde{\nu}_\alpha$ extends smoothly, we need to show that $\cl_S(b_-)\tilde{\psi}_\beta=O(w)$, in which the notation $O(w)$ means $w$ times a function that extends smoothly over $S^2\subset S^4$.  We do this calculation using the identification $S^1\times H^3\simeq S^4\setminus S^2$ with $\R^4\setminus\R^2$ that was introduced in the proof of Proposition \ref{prop:b- extends}.

Recall from Lemma \ref{lemma:BC b} that $b_-^{0,1}=O(w)$.  Turning to $b_-^{1,0}$, we deduce from $w/(1+|w|^2)=\rho e^{i\theta}/(1+\rho^2+x^2+y^2)$ that $dw=d(\rho e^{i\theta})+O(w)$.  So \eqref{Clifford representation sphere} gives
\begin{equation}
\cl_S:\frac{dw}{1+\rho^2+x^2+y^2}\mapsto\begin{pmatrix}O(w)&O(w)\\O(w)&i\end{pmatrix},\,
\frac{dz}{1+\rho^2+x^2+y^2}\mapsto\begin{pmatrix}0&i\\0&0\end{pmatrix}.
\end{equation}
From this and equations \eqref{tilde psi}, \eqref{explicit Killing spinors} we deduce that
\begin{equation}
\cl_S(b_-)=\cl_S(b_-^{1,0})+O(w)=\begin{pmatrix}O(w)&O(1)\\O(w)&O(1)\end{pmatrix},\quad
\tilde{\psi}_\beta = \begin{pmatrix}O(1)\\O(w)\end{pmatrix}.
\end{equation}
So $\cl_S(b_-)\tilde{\psi}_\beta$ equals a smooth section times $w$,  hence $\tilde{\nu}_\alpha$ given in \eqref{inverse map 3} extends smoothly across $\R^2\subset\R^4$.  Repeating the calculation in a different coordinate patch shows that $\tilde{\nu}_\alpha$ extends smoothly over $S^2\subset S^4$.
So $\nu_\alpha\in E_x^-$, and we have shown that \eqref{tangent space decomposition} is a surjective linear map.  Since the domain and target both have dimension $4n$, it is an isomorphism.
\end{proof}

%
%

We are now in a position to prove the uniqueness part of Theorem \ref{thm:framed gauge fixing}:
\begin{proposition}\label{prop:uniqueness}
Let $(A,\Phi)$ and $(a,\phi)$ be as in Theorem \ref{thm:gauge fixing}.  Let $\chi^{(1)},\chi^{(2)}$ be two infinitesimal unframed $\mathfrak{sl}(2,\C)$ gauge transformations such that $(a_-^{(p)},\phi_-^{(p)})=(a+d^A\chi^{(p)},\phi+[\Phi,\chi^{(p)}])$ solve the gauge fixing condition \eqref{gauge fixing}.  Then $\chi^{(1)}=\chi^{(2)}$.
\end{proposition}
\begin{proof}
The statement of the proposition is equivalent to injectivity of the map $Z_x\to T_x^\C M_{n,p}^f$ in \eqref{EKZT}.  The proof of Theorem \ref{thm:decomposition} showed that the map $E_x^-\otimes K^-\to Z_x^-$ is surjective.  Since $\dim(E_x^-\otimes K^-)=4n$, this means that $\dim(Z_x^-)\leq 4n$.  Corollary \ref{cor:dimension Mf} shows that $\dim(T_x^\C M^f_{n,p})=4n$, so
\begin{equation}
\dim(Z_x^-)\leq\dim(T_x^\C M^f_{n,p}).
\end{equation}
The existence part of Theorem \ref{thm:framed gauge fixing} (which was proved in section \ref{sec:existence}) says that the map $Z_x^-\to T_x^\C M_{n,p}^f$ is surjective.  Since $\dim(Z_x^-)\leq\dim(T_x^\C M^f_{n,p})$, this map must also be injective, by the rank-nullity theorem.
\end{proof}


\subsection{Symplectic and pluricomplex structures}
Now we discuss some some geometric structures that can be derived from the tensor decomposition \eqref{tangent space decomposition} of the tangent space.  Fix a non-zero $\psi \in K^+$, and let $\langle\psi\rangle\in\mathbb{P}K^+$ be the line through $\psi$.  We will first show that
\begin{proposition}\label{prop:complex structure}
There is a unique almost complex structure $J_{\langle\psi\rangle}$ on $TM^f_{n,p}$ whose holomorphic tangent bundle is
\begin{equation}
T^{1,0}_{\langle\psi\rangle}M^f_{n,p} = E^+\otimes\langle\psi\rangle.
\end{equation}
\end{proposition}
To prove this we will need to make use of the quaternionic structure on the spinor bundle.  The spinor bundle $S\to H^3$ admits a hermitian metric $\langle\cdot,\cdot\rangle$ and a quaternionic structure, i.e.\ an antilinear map $S\to S$, $s\mapsto\bar{s}$ that squares to $-1$.  These are related to the symplectic pairing by the identities $\langle s,t\rangle=(\bar{s},t)=\bar{s}^\ast(t)$.  In a suitable local frame, they can be written
\begin{equation}
s = \begin{pmatrix}s_1\\s_2\end{pmatrix}\mapsto \begin{pmatrix}\bar{s_2}\\ -\bar{s_1}\end{pmatrix} =: \bar{s},\quad \langle s,t\rangle = \bar{s}_1t_1+\bar{s}_2t_2.
\end{equation}
We extend complex conjugation to $\mathfrak{su } (2) $-valued spinors trivially, that is, for $ s \in S $, $ \upsilon \in \mathfrak{su }(2) $ we define $\overline{s \otimes \upsilon } = \overline{s }\otimes \upsilon $. Then
under the isomorphisms \eqref{clifford isomorphism}, \eqref{spinor isomorphism} the real structure on $(\Lambda^1\oplus\Lambda^0)\otimes\C$ given by complex conjugation corresponds to $\nu\otimes\psi ^\ast \mapsto\bar{\nu}\otimes\bar{\psi} ^\ast $.

Now the map $s\mapsto \bar{s}$ conjugates the Killing constant of any Killing spinor and conjugates the eigenvalue of any eigenspinor of $D^A-\ad(\Phi)$.  So $s\mapsto \bar{s}$ gives antilinear maps $E_x^\pm\to E_x^\mp$ and $K^\pm\to K^\mp$.  Therefore the two isomorphisms \eqref{tangent space decomposition} are exchanged by the real structure on $T^\C_xM^f_{n,p}$.

\begin{proof}[Proof of Proposition \ref{prop:complex structure}]
From the discussion above, $\nu\otimes\psi+\bar{\nu}\otimes\bar{\psi}$ is a real tangent vector for any $\nu \in E_x^+$.  The linear map $J_{\langle\psi\rangle}:T_xM^f_{n,p}\to T_xM^f_{n,p}$ will be given by
\begin{equation}
\label{J definition}
J_{\langle\psi\rangle}(\nu\otimes\psi+\bar{\nu}\otimes\bar\psi)=i\nu\otimes\psi-i\bar{\nu}\otimes\bar\psi\quad\forall\nu\in E_x^+.
\end{equation}
To ensure that this is well-defined we must check that every real tangent vector can be written in the form $\nu\otimes\psi+\bar{\nu}\otimes\bar\psi$, or in other words, that $\nu\mapsto \nu\otimes\psi+\bar{\nu}\otimes\bar\psi$ defines an isomorphism $E^-_x\to T_xM^f_{n,p}$.  This map is injective because if $\nu$ is in its kernel then $0=\nu\otimes\psi^\ast(\bar{\psi})+\bar{\nu}\otimes\bar\psi^\ast(\bar{\psi})=-\langle\psi,\psi\rangle\nu$, and $\langle\psi,\psi\rangle\neq0$ because $\psi\neq0$.  It is an isomorphism because its domain and target both have real dimension $4n$.
\end{proof}

Recall that an almost complex structure $J$ is compatible with a riemannian metric $g$ if and only if every holomorphic tangent vector $X$ is null, i.e.\ $g(X,X)=0$.  The almost complex structures $J_{\langle\psi\rangle}$ defined above are compatible with the bilinear pairing $g_+$ in a similar way:
\begin{proposition}\label{prop:g J}
Let $x\in M^f_{n,p}$ and let $\langle\psi\rangle\in K^\pm$.  Then for all $X\in T_xM^f_{n,p}$,
\begin{equation}
J_{\langle\psi\rangle}X = iX \implies g_+(X,X)=0.
\end{equation}
\end{proposition}
\begin{proof}
By definition, $J_{\langle\psi\rangle}X=iX$ if and only if $X=\nu\otimes\psi$ for some $\nu\in E^+_x$.  Under the isomorphism \eqref{clifford isomorphism}, the complex-linear inner product on $(\Lambda^1\oplus\Lambda^0)\otimes\C$ becomes $g(s\otimes t,s'\otimes t') = \frac12(s,s')(t,t')$.  So for any $\nu\otimes\psi\in E^+_x\otimes K^+$,
\begin{equation}
g_+(\nu\otimes\psi,\nu\otimes\psi)=-\frac{1}{4}\int_{H^3}(\psi,\psi)\tr\big(\nu,\nu\big)\mathrm{Vol}_{H^3}=0.
\end{equation}
\end{proof}

We have seen that the tensor product decomposition \eqref{tangent space decomposition} induces almost complex structures on the moduli space.  The decomposition also gives rise to tensor product decompositions of $g_\pm$, as we now explain.

Recall that $(\cdot,\cdot)$ denotes the symplectic form on the spinor bundle of $H^3$.  From this, we define $L^2$ skew bilinear forms $\omega_{E^\pm}$ on the spaces $E_x^\pm$ of eigenspinors:
\begin{equation}\label{omega definition}
\omega_{E^\pm}(\nu_1,\nu_2):=-\frac12\int_{H^3}\tr\big(\nu_1,\nu_2\big)\mathrm{Vol}_{H^3}.
\end{equation}
For the Killing spinors $K_\pm$, we have the following result:
\begin{lemma}
Let $\psi_1,\psi_2$ be two Killing spinors on $H^3$ with the same Killing constant.  Then $(\psi_1,\psi_2)$ is a constant function.
\end{lemma}
\begin{proof}
By the Killing spinor equation, for any tangent vector $X$:
\begin{align}
\nabla_X(\psi_1,\psi_2)&=(\nabla_X\psi_1,\psi_2)+(\psi_1,\nabla_X\psi_2)\\
&=\pm\frac{i}{2}\big[(X\cdot\psi_1,\psi_2)+(\psi_1,X\cdot\psi_2)\big]=0.
\end{align}
\end{proof}
For any $\psi_1,\psi_2\in K^\pm$ we define $\omega_{K^\pm}(\psi_1,\psi_2)\in\C$ to be the value of the constant function $(\psi_1,\psi_2)$ on $H^3$.  Then $\omega_{K^\pm}$ is a symplectic form on $K^\pm$.  We then have:
\begin{proposition}\label{prop:g omega}
The pairings $g_\pm,\omega_{E^\pm},\omega_{K^\pm}$ on $T^\C M^f_{n,p}$, $E^\pm$ and $K^\pm$ are related as follows:
\begin{equation}\label{omega g}
g_\pm = \frac12 \omega_{K^\pm}\otimes\omega_{E^\pm}.
\end{equation}
\end{proposition}
\begin{proof}
Under the isomorphism \eqref{clifford isomorphism}, the riemannian inner product on $TH^3$ corresponds to the symmetric pairing $\frac{1}{2}(\cdot,\cdot)\otimes(\cdot,\cdot)$ on $S\otimes S$.  So for any $\psi_\alpha\in K^\pm,\nu_\alpha\in E_x^\pm$,
\begin{align}
g_\pm(\nu_1\otimes\psi_1,\nu_2\otimes\psi_2)
&= -\frac14\int_{H^3}(\psi_1,\psi_2)\tr(\nu_1,\nu_2)\,\mathrm{Vol}_{H^3}\\
&= -\frac14(\psi_1,\psi_2)\int_{H^3}\tr(\nu_1,\nu_2)\,\mathrm{Vol}_{H^3}\\
&=\frac12\omega_{K^\pm}(\psi_1,\psi_2)\omega_{E^\pm}(\nu_1,\nu_2),
\end{align}
where we used the fact that $(\psi_1,\psi_2)$ is constant.
\end{proof}

The skew forms $\omega_{E^\pm}$ are reminiscent of a construction due to Nash \cite{nash:2007}.  To explain this, we first recall relevant part of \cite{nash:2007}.  Nash worked in minitwistor space, which is the space of oriented geodesics in $H^3$ and can  be identified with an open subset of $S^2_\infty\times S^2_\infty$.  From our perspective it is natural to identify this with $\mathbb{P}K^+\times\mathbb{P}K^-$.  A hyperbolic monopole corresponds to a spectral curve in minitwistor space such that a certain holomorphic bundle $L$ is trivial over this curve.  Nash defines a framed moduli space $M^t_{n,p}$ to be the space of spectral curves $\hat{S}$ in the total space of the bundle $L$ over minitwistor space.  He identifies his tangent space $T_x^\C M^t_{n,p}$ with the space of sections of the normal bundle $\hat{N}$ of this curve.  Nash obtains in eq.\ (28) of \cite{nash:2007} two decompositions
\begin{equation}\label{nash decomposition}
\begin{aligned}
T_x^\C M^t_{n,p} &\cong H^0(S,\hat{N}(-1,0))\otimes H^0(S,\mathcal{O}(1,0)) \\
T_x^\C M^t_{n,p} &\cong H^0(S,\hat{N}(0,-1))\otimes H^0(S,\mathcal{O}(0,1)).
\end{aligned}
\end{equation}
Here $\mathcal{O}(1,0)$ and $\mathcal{O}(0,1)$ are the duals of the tautological bundles of $\mathbb{P}K^\pm$; the spaces $H^0(S,\mathcal{O}(1,0))$, $H^0(S,\mathcal{O}(0,1))$ of sections are naturally identified with $K^\pm$.  So Nash's decompositions are very similar to \eqref{tangent space decomposition}.  In Lemma 3.5 Nash constructs skew-symmetric pairings on his spaces $H^0(S,\hat{N}(-1,0))$, $H^0(S,\hat{N}(0,-1))$ and shows that they are in fact symplectic forms.  We conjecture that his framed moduli space $M^t_{n,p}$ is naturally locally diffeomorphic to our $M^f_{n,p}$, that his decomposition \eqref{nash decomposition} coincides with \eqref{tangent space decomposition}, and that his symplectic forms coincide with \eqref{omega definition}.  If true, this would imply that $\omega_{E^\pm}$ are nondegenerate and hence, via \eqref{omega g}, that $g_\pm$ are nondegenerate.  It would also give a reduction of the structure group of $T^\C M^f_{n,p}$ to $Sp(2n,\C)$.

\begin{remark}
From $\omega_{E^\pm}$, one can construct a family of holomorphic (symplectic) forms on the tangent bundle.  These are parametrised by $\psi\in K^\pm$ and defined by
\begin{equation}
\Omega_{\psi}^\pm((a_1,\phi_1),(a_2,\phi_2)):= \omega_{E^\pm}((a_1+\phi_1)\cdot\psi,(a_2+\phi_2)\cdot\psi).
\end{equation}
With respect to the complex structures $J_{\langle\psi\rangle}$, $\Omega_{\psi}^-$ is of type $(2,0)$ and $\Omega_{\psi}^+$ is of type $(0,2)$.
\end{remark}

\section{Calculating the metric}
\label{sec:calculating}
In this section we obtain some explicit solutions of the gauge-fixing condition \eqref{gauge fixing} and hence evaluate some components of the bilinear form $g_\pm$.  In particular, we calculate $g_\pm$ for the 1-monopole moduli space and confirm that it is a riemannian metric.

Let $(A,\Phi)$ be a monopole of any charge and any mass parameter.  Then
\begin{equation}\label{circle tangent vector}
(a,\phi)_0:=(d^A\Phi,0)
\end{equation}
solves the linearised Bogomolny equation \eqref{linearised bog_eq} and both gauge-fixing conditions \eqref{gauge fixing} because $(A,\Phi)$ solves the Bogomolny equation \eqref{bog_eq} and hence $d^A\ast d^A\Phi = d^AF^A=0$.  The solution \eqref{circle tangent vector} is induced by the unframed gauge transformation $\Phi$.  However, $\Phi$ is not a framed gauge transformation, because $\int_{H^3}\tr(F^A\wedge d^A\Phi)=-4\pi np\neq0$.  So \eqref{circle tangent vector} is a non-zero tangent vector to $M^f_{n,p}$.  Inserting \eqref{circle tangent vector} into \eqref{g+-} gives
\begin{equation}
g_\pm((a,\phi)_0,(a,\phi)_0)=-\frac{1}{2}\int_{H^3}\tr(d^A\Phi\wedge\ast d^A\Phi)=E = 2\pi np.
\end{equation}

From \eqref{circle tangent vector} and two independent Killing spinors $\psi_1,\psi_2\in K^\pm$ we can construct two solutions
\begin{equation}\label{F eigenspinors definition}
\nu_\alpha=- d^A\Phi\cdot\psi_\alpha=F^A\cdot\psi_\alpha
\end{equation}
of the Dirac eigenspinor equation \eqref{eigenspinors}.  Proposition \ref{prop:Killing} shows that these solve \eqref{eigenspinors}, and moreover that $\nu_\alpha\otimes\psi_\beta^\ast$ give four solutions of the linearised Bogomolny equation \eqref{linearised bog_eq} and gauge-fixing condition \eqref{gauge fixing}.  If $\psi_1,\psi_2$ are chosen such that $(\psi_1,\psi_2)=1$ as in \eqref{explicit Killing spinors} then $\psi_2\otimes\psi_1^\ast-\psi_1\otimes\psi_2^\ast$ is the identity map on $S$ and so
\begin{equation}\label{infingaugetransf}
\nu_1\otimes\psi_2^\ast-\nu_2\otimes\psi_1^\ast = d^A\Phi
\end{equation}
corresponds to the tangent vector \eqref{circle tangent vector} that we started with.  In this equation, and throughout this section, we implicitly make use of the isomorphism \eqref{clifford isomorphism} between $\Lambda^1\oplus\Lambda^0$ and $S\otimes S^\ast$, and omit the notation $\cl_H$.

The remaining three tangent vectors can be interpreted using isometries.  Given any Killing vector field $Z$, $(a,\phi)_Z:=(Z\lrcorner F^A,Z\lrcorner d^A\Phi)$ differs from the Lie derivative $(L_Z A,L_Z\Phi)$ of the monopole $(A,\Phi)$ by a gauge transformation.  So it describes a tangent vector to the moduli space induced by $Z$ and solves the linearised Bogomolny equation \eqref{linearised bog_eq}, but does not necessarily solve the gauge-fixing condition \eqref{gauge fixing}.

The isometry group of hyperbolic space is isomorphic to the Lorentz group $SO (3,1)$.  This is generated by Killing vector fields $X _i , Y _j$ satisfying
 \begin{equation}
[ X _i , X _j ] =- \epsilon _{ ijk }X _k , \quad [ X _i , Y _j ] =- \epsilon _{ ijk }Y _k , \quad [ Y _i , Y _j ] =\epsilon _{ ijk }X _k .
\end{equation} 
In upper half-space coordinates they are given by
\begin{equation}
\label{kvfh3upper} 
\begin{aligned}
X _1 &=  y  Y _3  +  \frac{1}{2} ( 1 - |\mathbf{x} |  ^2  )\partial _y ,&
Y _1 &=  - x Y _3   + \frac{1}{2} (1 + |\mathbf{x} | ^2  ) \partial _x  ,\\
X _2 &=  -xY _3   - \frac{1}{2} (1 - |\mathbf{x} |  ^2  )\partial _x,&
Y _2 &=  - y  Y _3   + \frac{1}{2} (1  + |\mathbf{x} | ^2  ) \partial _y ,\\
X _3 &=  x  \partial _y  - y \partial _x,&
Y _3 &=  x  \partial _x + y \partial _y + \rho  \partial _ \rho ,
\end{aligned}
\end{equation} 
where $|\mathbf{x} | ^2 =x ^2 + y ^2 + \rho ^2 $.  We compare these with the vector fields $Z_{\psi_\alpha,\psi_\beta}$ given by
\begin{equation}
Z_{\psi_\alpha,\psi_\beta} =-\psi_\alpha\otimes\psi_\beta^\ast-\psi_\beta\otimes\psi_\alpha^\ast
\end{equation}
under the isomorphism \eqref{clifford isomorphism}.  For concreteness, we now choose the lower sign in the gauge-fixing condition \eqref{gauge fixing} and choose $\psi_1,\psi_2\in K^-$ as in \eqref{explicit Killing spinors}.  Using \eqref{Clifford representation} one calculates e.g.\
\begin{equation}
Z_{\psi_1,\psi_2}  = \begin{pmatrix}
1 & 2 (x-iy)\rho ^{-1}  \\
0 & -1
\end{pmatrix}  =-X _3 +i Y _3 .
\end{equation} 
Proceeding similarly we find
\begin{equation}
\begin{split} 
Z _1 :=Y_1+iX _1  &= i(\psi_2\otimes\psi_2^\ast-\psi_1\otimes\psi_1^\ast),\\
Z _2 := Y_2+iX _2 &=-(\psi_1\otimes\psi_1^\ast+\psi_2\otimes\psi_2^\ast), \\
Z _3 :=Y_3+iX _3  &=i(\psi_1\otimes\psi_2^\ast+\psi_2\otimes\psi_1^\ast) .
 \end{split} 
\end{equation}
In general, the Clifford multiplication of the two-form $F^A$ with a vector field $Z$ is given by
\begin{equation}
F^A\cdot Z = F^A\wedge Z - F^A\llcorner Z = (Z\lrcorner\ast F^A)\mathrm{Vol}+Z\lrcorner F^A = Z\lrcorner d^A\Phi + Z\lrcorner F^A,
\end{equation}
in which we used that $\mathrm{Vol}$ acts as identity in the Clifford algebra.  Thus
\begin{equation} 
\label{symmtransf} 
\begin{split} 
i(\nu _2 \otimes \psi _2 ^\ast - \nu _1 \otimes \psi _1 ^\ast)&=(Z_1\lrcorner F^A,Z_1\lrcorner d^A\Phi)=:(a,\phi)_1    ,\\
-(\nu _1 \otimes \psi _1 ^\ast  + \nu _2 \otimes \psi _2 ^\ast) &=(Z_2\lrcorner F^A,Z_2\lrcorner d^A\Phi)=:(a,\phi)_2,\\
i( \nu _1 \otimes \psi _2 ^\ast + \nu _2 \otimes \psi _1 ^\ast )&=(Z_3\lrcorner F^A,Z_3\lrcorner d^A\Phi)=:(a,\phi)_3
\end{split} 
\end{equation}
are solutions of the linearised Bogomolny equation \eqref{linearised bog_eq} and the gauge fixing condition \eqref{gauge fixing}.  They are complexified tangent vectors and can be interpreted as complex linear combinations of boosts and rotations of the monopole.

We have now identified four complexified tangent vectors \eqref{infingaugetransf}, \eqref{symmtransf}, written in terms of $\nu_\alpha\in E_x^-$ and $\psi_\alpha\in K^-$.  We can evaluate the bilinear form $g_-$ on them with the help of \eqref{omega g}, which relates these to skew-symmetric pairings on $K^-$, $E_x^-$.  Working in upper half space coordinates with the Clifford algebra representation (\ref{Clifford representation}) we get
\begin{equation}
F ^A  =-i \rho ^2 (F _{ xy } \sigma _3 + F _{ y \rho }\sigma _1 + F _{ \rho x } \sigma _2 ) = -i \rho ^2 \begin{pmatrix}
F _{ x y } & F _{ y \rho } - i F _{ \rho x }  \\
F _{ y \rho }+ i F _{ \rho x } & - F _{ xy }
\end{pmatrix} .
\end{equation}
Then from \eqref{explicit Killing spinors} and \eqref{F eigenspinors definition} we compute
\begin{equation}
\label{F eigenspinors} 
\begin{split} 
\nu _1 & = - i \rho ^{ 3/2 } \begin{pmatrix}
 F _{ x y }  \\
 F _{ y \rho }+ i F _{ \rho x }
\end{pmatrix} , \\
\nu _2 & =  i \rho ^{ 3/2 } \begin{pmatrix}
(x - i y )F _{ xy } - \rho (F _{ y \rho }- i F _{ \rho x } )\\
(x - i y ) (F _{ y \rho }+ i F _{ \rho x } ) +  \rho F _{ xy }
\end{pmatrix} .
\end{split} 
\end{equation}
Thus
\begin{equation}
\begin{split} 
\omega_{E^-}(\nu_1,\nu_2) &
= - \frac{1}{2} \int _{ H ^3 } \operatorname{Tr} (\nu _1 , \nu _2 ) \operatorname{Vol} _{ H ^3 }  \\ &
= - \frac{1}{2} \int _{ H ^3 } \rho ^4  \operatorname{Tr} ( F _{ xy } ^2 + F _{ y \rho } ^2 + F _{ \rho x }^2 ) \operatorname{Vol} _{ H ^3 } \\ &
= - \frac{1}{2} \int _{ H ^3 } \operatorname{Tr} (F  ^A \wedge *  F ^A  )
= 2\pi np,\label{main omega int}
\end{split} 
\end{equation} 
using (\ref{energy functional}).  Thus, from \eqref{omega g},
\begin{equation} 
\label{main en int} 
g_-( \nu_\alpha \otimes \psi _\beta ^\ast , \nu _\gamma \otimes \psi _\delta ^\ast  ) =\pi n p\,\epsilon_{\alpha\gamma}\epsilon_{\beta\delta}
\end{equation} 
It follows that
\begin{equation}
\label{monopole inertia tensor} 
g_-( (a , \phi ) _{\mu}, (a , \phi )_{\nu})  = 2\pi n p\,  \delta _{ \mu\nu },\quad \text{for}\,\mu,\nu\in\{0,1,2,3\}.
\end{equation} 

Equation \eqref{monopole inertia tensor} is a satisfyingly simple formula for the symmetry-induced components of $g_-$.  It is also consistent with an interpretation of charge $n$ monopoles as ensembles of $n$ point particles, each with mass $2\pi p$.  To see this, we note that the Killing vector fields \eqref{kvfh3upper} satisfy
\begin{equation}
g(Z_j,Z_k)=g(Y_j,Y_k)-g(X_j,X_k)+ig(X_j,Y_k)+ig(Y_j,X_k)=\delta_{jk}
\end{equation}
with respect to the metric $g$ on $H^3$.  The motion of a particle of mass $m$ in $H^3$ is described by the metric $mg$.  The motion of $n$ particles of equal mass is described by the product metric $g_n=mg+\ldots+mg$ on $(H^3)^n$.  So we find that
\begin{equation}
g_n(Z_j,Z_k)=g_n(Y_j+iX_j,Y_k+iX_k)=nm\delta_{jk},
\end{equation}
in which $X_j,Y_j$ represent simultaneous boosts and rotations of all $n$ particles.  This agrees with our result \eqref{monopole inertia tensor} for the choice $m=2\pi p$.  The total mass $2\pi np$ is equal to the energy $E$ of the monopole.  It would be interesting to also compare inner products of real Killing vectors $X_i,Y_j$ for the point particle and monopole metrics.  However, we are not able to do this because we are currently only able to solve the gauge fixing condition \eqref{gauge fixing} for the complex combinations $Y_j+iX_j$.  For point particles, the inner products $g(X_j,X_k)$ etc depend on the position of the particle in $H^3$, so one would expect that for monopoles $g_-(X_j,X_k)$ depends on the moduli space coordinates, unlike the inner products \eqref{monopole inertia tensor}.

Equation \eqref{monopole inertia tensor} fully determines $g_-$ in the case of charge 1.  It is well-known that the Bogmolny equations admit a spherically-symmetric solution when $n=1$.  The moduli space $M^f_{1,p}$ can be generated by transforming this monopole using isometries of $H^3$ and unframed gauge transformations $g=\exp(s\Phi)$.  It is therefore diffeomorphic to $\R\times SO(3,1)/SO(3)=\R\times H^3$.  The bilinear form $g_-$ is clearly invariant under the transitive action of $SO(3,1)\times\R$, so it is determined by its value at a single point in the moduli space.

Consider the point in the moduli space corresponding to the spherically-symmetric 1-monopole.  This is identified with the point $(x,y,\rho)=(0,0,1)$ in $H^3$.  Rotation of this monopole is equivalent to a gauge transformation, so
\begin{equation}
(X_j\lrcorner F^A,X_j\lrcorner d^A\Phi)=(d^A\chi_j,[\Phi,\chi_j])
\end{equation}
for some gauge transformations $\chi_j$.  These gauge transformations are framed,
because
\begin{multline}
\int_{H^3}\tr(d^A\chi_j\wedge\ast d^A\Phi)
=\int_{H^3}\tr(X_j\lrcorner F^A\wedge F^A)\\
=\frac12\int_{H^3}X_j\lrcorner\tr(F^A\wedge F^A)
=\int_{H^3}0
=0.
\end{multline}
So
\begin{equation}
\label{rotgeqzero1monopole} 
(a,\phi)_j=(Z_j\lrcorner F^A,Z_j\lrcorner d^A\Phi)=(Y_j\lrcorner F^A+id^A\chi_j,Y_j\lrcorner d^A\Phi+i[\Phi,\chi_j])
\end{equation}
and the tangent vectors $(a,\phi)_j$ represent boosts of the monopole.  They can therefore be identified with the tangent vectors $Y_j$ given in \eqref{kvfh3upper}, evaluated at the point $(0,0,1)\in H^3$.  By direct calculation, $g_{H^3}(Y_j,Y_k)=\delta_{jk}$ at this point.  Comparing with \eqref{monopole inertia tensor}, we conclude that the metric on $M^f_{1,p}$ is
\begin{equation}
\label{1mon metric}
g_- = 2\pi p(g_\R+g_{H^3}).
\end{equation}
Since this is real it agrees with $g_+$.

In the case $p=\frac12$ the result \eqref{1mon metric} for the metric on $M^f_{1,\frac12}$ can also be verified by a direct calculation based on explicit formulae for the monopole.  We include these calculations here as they provide a consistency check on our results, and they might be useful in future calculations for $n>1$.  A monopole in $M^f_{1,\frac12}$ corresponds to a circle-invariant instanton with $c_2=2np=1$.  By writing the instanton using the 't Hooft ansatz and changing to a circle-invariant gauge we obtain:
\begin{equation} 
\label{1-monopoleA} 
\begin{split} 
A = \frac{i}{\lambda ^2 + |\mathbf{x} - \mathbf{x} _0 | ^2 } \Big[&( \rho \sigma _2 - (y- y _0 ) \sigma _3 ) d  x + ( (x- x _0 ) \sigma _3  -\rho \sigma _1) d  y \\  &+( (y- y _0 ) \sigma _1 - (x- x _0 ) \sigma _2 )d  \rho \Big] ,
\end{split} 
\end{equation}
\begin{equation} 
 \label{1-monopolePhi} 
\Phi = \frac{i}{2} \sigma _3  - \frac{ i\rho }{\lambda ^2 + |\mathbf{x} - \mathbf{x} _0 | ^2 } (\rho \sigma _3 + (x- x _0 ) \sigma _1 + (y- y _0 ) \sigma _2),
\end{equation}
in which $x_0,y_0,\lambda$ parametrise the monopole centre with respect to upper half space coordinates on $H ^3 $ and $|\mathbf{x}-\mathbf{x}_0|^2=(x-x_0)^2+(y-y_0)^2+\rho^2$.  The corresponding field strength is 
\begin{equation}
F _{ xy } = i f \sigma _3 , \quad F _{ y \rho } =i f \sigma _1 , \quad F _{ \rho x } =i f \sigma _2 ,
\end{equation} 
with
\begin{equation}
\label{harmonic1-mon} 
f =\frac{2 \lambda ^2 }{ ( |\mathbf{x} - \mathbf{x} _0 |^2  + \lambda ^2 )^2 }.
\end{equation} 
Using (\ref{F eigenspinors}) one calculates 
\begin{equation}
\operatorname{Tr} (\nu _1 , \nu _2 ) =  - 6\rho ^4 f ^2 .
\end{equation}
Therefore
\begin{multline}
\label{explicitmetriccomp} 
- \frac{1}{2} \int _{ H ^3 } \operatorname{Tr} (\nu _1 , \nu _2 )\operatorname{Vol} _{ H ^3 } 
= \\
\int _{ H ^3 } \frac{  12 \lambda ^4 \rho ^4  }{( (x- x _0 ) ^2 +  (y- y _0 ) ^2 + \rho ^2 + \lambda ^2  )^4 } \frac{d  x \, d  y \, d  \rho }{\rho ^3 }
= \pi ,
\end{multline}
which agrees with (\ref{main omega int}) since $n =1 $ and $p =\tfrac{1}{2} $.  The metric $g_-$ is therefore given by \eqref{main en int}.

We can also obtain (\ref{rotgeqzero1monopole}) by a direct calculation.  To do so, we must choose $\lambda=1$, $x_0=y_0=0$ and find functions $\chi _j $  such  that 
\begin{equation}
(a,\phi)=
(Y_j\lrcorner F^A,Y_j\lrcorner d^A\Phi) +i( d  _A \chi  _j , [ \Phi , \chi _j ])
\end{equation}
is equal to a linear combination of $\nu_\alpha\otimes\psi_\beta$.
For example, take $j =1 $. One has
\begin{align}
\label{Y1monopoleAvariation} 
Y _1\lrcorner F^A &= \frac{i  }{(1 + |\mathbf{x}  |^2 )^2 } 
\Big[
(2x \rho \sigma _1  + (1 - x ^2  + y ^2 + \rho ^2 ) \sigma _3 )d  y
\nonumber\\
&
- (2x y \sigma _1  +  (1 - x ^2  + y ^2 + \rho ^2 ) \sigma _2 )d  \rho 
+ 2 x   ( y \sigma _3 -\rho \sigma _2 ) d  x
\Big] ,\\
\label{Y1monopoleHiggsvariation} 
Y _1\lrcorner d^A\Phi
&=\frac{i \rho  }{(1 + |\mathbf{x}  |^2 )^2 } [ 2x (y \sigma _2  + \rho \sigma _3 )-(1 - x ^2  + y ^2 + \rho ^2 ) \sigma _1 ].
\end{align} 
It is now useful to note that
\begin{equation}
\operatorname{Tr}  \left[   \begin{pmatrix}
\Phi  & 0 \\
0 & \Phi 
\end{pmatrix}(\phi + a )   \right] 
=2 \operatorname{Tr} ( \Phi \phi )
\end{equation} 
is a gauge invariant quantity. For $(a,\phi)=(Y_1\lrcorner F^A,Y_1\lrcorner d^A\Phi)$ we find using (\ref{1-monopolePhi}) and (\ref{Y1monopoleHiggsvariation}) that
\begin{equation}
2 \operatorname{Tr} ( \Phi \phi ) 
= - \frac{8 x \rho ^2 }{ (1 + |\mathbf{x} | ^2 ) ^3 }.
\end{equation} 
We also have
\begin{align} 
G _1 :=2 \operatorname{Tr} \left[  
\nu _1 \otimes \psi _1 ^\ast 
\begin{pmatrix}
\Phi  & 0 \\
0 & \Phi 
\end{pmatrix}\right]   &=-\frac{4i\rho ^2  (x +  i y )}{ (1 + |\mathbf{x} |^2 )^3 },\\
G _2 :=2 \operatorname{Tr} \left[  
\nu _1 \otimes \psi _2 ^\ast 
\begin{pmatrix}
\Phi  & 0 \\
0 & \Phi 
\end{pmatrix}\right]   &
=2 \operatorname{Tr} \left[  
\nu _2 \otimes \psi _1 ^\ast 
\begin{pmatrix}
\Phi  & 0 \\
0 & \Phi 
\end{pmatrix}\right] 
=\frac{2i\rho ^2  (|\mathbf{x} |^2-1  )}{ (1 + |\mathbf{x} |^2 )^3 },\\
G _3 :=2 \operatorname{Tr} \left[  
\nu _2 \otimes \psi _2 ^\ast 
\begin{pmatrix}
\Phi  & 0 \\
0 & \Phi 
\end{pmatrix}\right]   &= \frac{4i\rho ^2  (x -  i y )}{ (1 + |\mathbf{x} |^2 )^3 }.
\end{align} 
Since
\begin{equation}
i (G _3 - G _1) =2 \operatorname{Tr} ( \Phi \phi ) ,
\end{equation}
it must be that \eqref{Y1monopoleAvariation}, \eqref{Y1monopoleHiggsvariation} corresponds to $i\nu_2\otimes\psi_2^\ast-i\nu_1\otimes\psi_1^\ast$, in agreement with \eqref{symmtransf}.  We therefore look for a function $\chi _1  $ satisfying
\begin{equation}
\label{Y1conditions} 
(Y_1\lrcorner F^A , Y_1\lrcorner d^A\Phi )  + i( d  _A \chi_1  ,  [ \Phi , \chi _1 ] ) =  
i ( \nu _2 \otimes \psi _2 ^\ast - \nu _1 \otimes \psi _1 ^\ast  ).
\end{equation} 
A computation shows that (\ref{Y1conditions}) is equivalent to the conditions
\begin{align}
 i[ \Phi , \chi _1 ] 
&= \frac{\rho (2 \rho y \sigma _3 + 2 xy \sigma _1 + (1- x ^2 + y ^2  - \rho ^2 ) \sigma _2 ) }{ (1 + |\mathbf{x} |^2 )^2}  ,\\
id ^A \chi _1 
&= \frac{1}{(1 + | \mathbf{x}  |^2 )^2 } \big[
d  \rho  \left(\sigma _1 \left(\rho   ^2+x^2-y^2-1\right)+2x   y\sigma _2\right)
\nonumber\\
&
+2y d  y  \left(\rho  \sigma _1 - x\sigma _3   \right)
-d  x \left(\sigma _3 \left(\rho   ^2+x^2-y^2-1\right)+ 2 \rho y  \sigma _2   \right)
\big]. 
\end{align}
The above equations have the unique solution
\begin{equation}
\chi _1 =  \frac{i (\rho \sigma _1 -x \sigma _3) }{1 + |\mathbf{x} |^2 }.
\end{equation} 
With similar computations we find that 
\begin{align}
(Y_2\lrcorner F^A,Y_2\lrcorner d^A\Phi)   + i( d  _A \chi _2 , [ \Phi , \chi _2 ])   &
=  -(\nu _1 \otimes \psi _1 ^\ast  + \nu _2 \otimes \psi _2 ^\ast)  ,\\
(Y_3\lrcorner F^A,Y_3\lrcorner d^A\Phi)   + i( d  _A \chi _3 , [ \Phi , \chi _3 ])   &
= i( \nu _1 \otimes \psi _2 ^\ast + \nu _2 \otimes \psi _1 ^\ast )  , 
\end{align}
with
\begin{equation}
\chi _2 =\frac{i (\rho \sigma _2-y \sigma _3) }{1 + |\mathbf{x} |^2 },
\quad \chi _3 = \frac{ i\sigma _3  }{1 + |\mathbf{x} |^2 } + \Phi ,
\end{equation}
which confirms the result \eqref{symmtransf}, (\ref{rotgeqzero1monopole}).


\medskip
\noindent\textbf{Funding} Partial financial support was received from a London Mathematical Society Scheme 4 Research in Pairs grant (ref.~42324).



\appendix
\section{Alternative framings and complex structures}
\label{appendix}


In this appendix we compare our definition of the framed moduli space with definitions that have been used by others.  We also compare the almost complex structures $J_{\langle\psi\rangle}$ with some known integrable complex structures.

The traditional definition of the framed moduli space is as follows \cite{atiyah:1987}.  Let us choose a point in $S^2_\infty$, denoted by $\infty\in S^2_\infty$.  The group of gauge transformations framed at $\infty$ is the group $G^\infty$ of unframed gauge transformations $g:H^3\to SU(2)$ such that $g(\infty)=\mathrm{id}_V$.  The \emph{moduli space of hyperbolic monopoles framed at $\infty$} is
\begin{equation}
M^\infty_{n,p}=C_{n,p}/G^\infty.
\end{equation}

To compare $M^\infty_{n,p}$ with $M^f_{n,p}$, we introduce a third moduli space.  Let $T$ be the section of $\mathrm{End}(V)|_{S^2_\infty}$ whose eigenspaces $L^\pm$ have eigenvalues $\pm1$.  Let $(A^o,\Phi^o)\in C_{n,p}$ be a fixed monopole.  The monopole boundary conditions imply that, for any other monopole $(A,\Phi)\in C_{n,p}$, there exists a real 1-form $\alpha$ on $S^2_\infty$ such that $(A-A^o)|_{S^2_\infty}=i\alpha T$.   A monopole $(A,\Phi)\in C_{n,p}$ is said to be \emph{framed at the boundary} if this 1-form satisfies $d\ast_{S^2_\infty} \alpha=0$.  The space of monopoles framed at the boundary is denoted by $C^b_{n,p}$.  Let $G^b\subset G^\infty$ be the group of unframed gauge transformations satisfying $g|_{S^2_\infty}=\mathrm{id}_V$.  This group clearly acts on $C^b_{n,p}$, and we define the \emph{moduli space of boundary-framed hyperbolic monopoles} to be
\begin{equation}
M^b_{n,p}=C^b_{n,p}/G^b.
\end{equation}

The moduli spaces $M^\infty_{n,p}$ and $M^b_{n,p}$ admit actions of $s\in\R$ given by gauge transformation with $g_s=\exp(s\Phi)$.  The eigenvalues of $\Phi$ on $S^2_\infty$ are $\pm ip$, so when $s= \pi/p$, $-g_s\in G^b\subset G^\infty$.  The action of $-g_s$ on a monopole is the same as the action of $g_s$, so $s=\pi/p$ acts trivially on $M^\infty_{n,p}$ and $M^b_{n,p}$.  Therefore the actions of $\R$ descend to free actions of $S^1=\R/\frac{\pi}{p}\mathbb{Z}$.
\begin{proposition}\label{prop:3 framings}
There are natural diffeomorphisms $M^b_{n,p}\to M^\infty_{n,p}$ and $M^b_{n,p}/\mathbb{Z}_n\to M^f_{n,p}/8\pi^2\mathbb{Z}$.
\end{proposition}
\begin{proof}
Since $C^b_{n,p}\subset C_{n,p}$ and $G^b\subset G^\infty$ there is a natural map
\begin{equation}
M^b_{n,p}=C^b_{n,p}/G^b\to C_{n,p}/G^\infty=M^\infty_{n,p}.
\end{equation}
We will show that this is a bijection and that it is a diffeomeorphism.

For surjectivity, suppose that $(A,\Phi)\in C_{n,p}$ represents a point in $M^\infty_{n,p}$.  We must find a $g\in G^\infty$ such that $g\cdot(A,\Phi)\in C^b_{n,p}$.  The boundary conditions for hyperbolic monopoles imply that $(A-A^o)|_{S^2_\infty}=i\alpha T$ for some real 1-form $\alpha$.  By solving the Poisson equation $d\ast_{S^2_\infty} d\lambda = d\ast_{S^2_\infty}\alpha$, we can find a $\lambda:S^2_\infty\to\R$ such that $\alpha-d\lambda$ is coclosed.  By subtracting a constant, we can arrange that $\lambda(\infty)=0$.  We can then choose a gauge transformation $g\in G^\infty$ such that $g|_{S^2_\infty}=\exp(i\lambda T)$ (there is no topological obstruction to doing so because $H_2(SU(2),\mathbb{Z})=0$).  The gauge-transformed monopole $g\cdot(A,\Phi)=(gAg^{-1}-dgg^{-1},g\Phi g^{-1})$ will satisfy $(g\cdot A-A^o)|_{S^2_\infty}=i(\alpha-d\lambda)T$ so will belong to $C^b_{n,p}$.

For injectivity, suppose that $(A^1,\Phi^1),(A^2,\Phi^2)\in C^b_{n,p}$ map to the same point in $M^\infty_{n,p}$.  Then there exists $g\in G^\infty$ such that $(A^2,\Phi^2)=g\cdot (A^1,\Phi^1)$.  We will show that $g\in G^b$ so that $(A^1,\Phi^1)$ and $(A^2,\Phi^2)$ determine the same point in $M^b_{n,p}$.  The gauge transformation $g$ is such that $g|_{S^2_\infty}=\exp(i\lambda T)$ for some function $\lambda:S^2_\infty\to \R$ satisfying $\lambda(\infty)=0$.  The connections $A^1,A^2$ satisfy $(A^i-A^o)|_{S^2_\infty}=i\alpha^iT$ for some coclosed 1-forms $\alpha^1,\alpha^2$.  Since $g A^1 g^{-1}-dg g^{-1}=A^2$, $\alpha^1-\alpha^2=d\lambda$.  Since $d\ast_{S^2_\infty}\alpha^i=0$, the function $\lambda$ is harmonic and hence constant.  Since $\lambda(\infty)=0$, $\lambda=0$ on the whole of $S^2_\infty$.  Therefore $g\in G^b$ as claimed.

To show that the map is a diffeomorphism, we consider the induced map on tangent vectors.  Fix $(A,\Phi)\in C^b_{n,p}$ representing a point in $M^b_{n,p}$.  Then 
\begin{equation}
T_{A,\Phi}M^b_{n,p}=T_{A,\Phi}C^b_{n,p}/\g^b_{A,\Phi}\quad\text{and}\quad
T_{A,\Phi}M^\infty_{n,p}=T_{A,\Phi}C_{n,p}/\g^\infty_{A,\Phi},
\end{equation}
where
\begin{align}
T_{A,\Phi}C^b_{n,p}&=\big\{(a,\phi)\in T_{A,\Phi}C_{n,p}\::\:a|_{S^2_\infty}=i\alpha T\text{ and }d\ast_{S^2_\infty}\alpha=0\big\},\\
\g^b_{A,\Phi}&=\big\{(d^A\chi,[\Phi,\chi])\in \g^u_{A,\Phi}\::\:\chi|_{S^2_\infty}=0\big\},\label{boundary IGT}\\
\g^\infty_{A,\Phi}&=\big\{(d^A\chi,[\Phi,\chi])\in \g^u_{A,\Phi}\::\:\chi(\infty)=0\big\},
\end{align}
and $\g^u_{A,\Phi}$ was defined on page \pageref{gu definition}.  We must show that the natural map $T_{A,\Phi}M^b_{n,p}\to T_{A,\Phi}M^\infty_{n,p}$ is a bijection.

For surjectivity, let $(a,\phi)\in T_{A,\Phi}C_{n,p}$.  We must find a $\chi$ such that $(a-d^A\chi,\phi-[\Phi,\chi])\in T_{A,\Phi}C^b_{n,p}$ and $\chi(\infty)=0$.  By writing $\chi|_{S^2_\infty}=i\lambda T$, this requirement is equivalent to solving the Poisson equation $d\ast_{S^2_\infty}d\lambda = d\ast_{S^2_\infty}\alpha$ and $\lambda(\infty)=0$ on $S^2_\infty$.  The Poisson equation has a unique solution satisfying $\lambda(\infty)=0$, which can be extended to an infinitesimal gauge transformation $\chi$ with the required properties.

For injectivity, let $(a,\phi)\in T_{A,\Phi}C^b_{n,p}$ be such that $(a,\phi)=(d^A\chi,[\Phi,\chi])$ for some $\chi$ satisfying $\chi(\infty)=0$.  We must show that $\chi|_{S^2_\infty}=0$.  We can write $\chi|_{S^2_\infty}=i\lambda T$ and $a|_{S^2_\infty}=id\lambda T$ for some function real function $\lambda$ on $S^2_\infty$.  By assumption, $d\ast_{S^2_\infty}d\lambda=0$.  So $\lambda$ is harmonic and therefore constant.  Since $\lambda(\infty)=0$, $\lambda=0$ on $S^2_\infty$ and therefore $\chi|_{S^2_\infty}=0$ as required.

Now we prove the second statement in the proposition.  For this, we introduce some notation.  Let
\begin{align}
\tilde{G}^f_{A,\Phi}&=\big\{g\in G^u\::\:CS[A,g\cdot A]\in 8\pi^2\mathbb{Z}\big\} \\
\tilde{G}^b_n&=\big\{g\in G^u\::\:g|_{S^2_\infty}=\exp\big(\tfrac{j\pi i}{n}T\big)\text{ for some }j\in\mathbb{Z}\big\}.
\end{align}
Then the quotient of $M^b_{n,p}$ by the group $\mathbb{Z}_n\subset S^1=\R/\frac{\pi}{n}\mathbb{Z}$ is $M^b_{n,p}/\mathbb{Z}_n=C^b_{n,p}/\tilde{G}^b_n$, and $M^f_{n,p}/8\pi^2\mathbb{Z}$ is the quotient of $C_{n,p}$ by the equivalence relation $(A,\Phi)\sim g\cdot(A,\Phi)$ for $g\in \tilde{G}^f_{(A,\Phi)}$.  We claim that $\tilde{G}^b_n\subset\tilde{G}^f_{A,\Phi}$ for any $(A,\Phi)\in C_{n,p}$.  This will imply that there is a natural map
\begin{equation}\label{Mb to Mf}
M^b_{n,p}/\mathbb{Z}_n= C^b_{n,p}/\tilde{G}^b_{n}\to  M^f_{n,p}/8\pi^2\mathbb{Z}.
\end{equation}

To prove the claim, suppose that $g\in\tilde{G}^b_n$ and $(A,\Phi)\in C_{n,p}$.  Then there exists a $j\in\mathbb{Z}$ such that $g':=g\exp(-\frac{\pi j}{np}\Phi)$ satisfies $g'\in G^b$.  By Lemma \ref{lemma:chern simons additivity},
\begin{equation}
CS[A,g\cdot A] = CS[A,A'] + CS[A',g'\cdot A'].
\end{equation}
in which $A'=\exp(\tfrac{\pi j}{np}\Phi)\cdot A$.  By Lemma \ref{lemma:changing CS}, the first term on the right hand side of this equation is $-8\pi^2 j$.  The second can be evaluated using an identity,
\begin{equation}\label{equivariant degree}
\frac{1}{8\pi^2}CS[A',g'\cdot A'] = \frac{1}{24\pi^2}\int_{H^3}\tr\left(L^3 -3L\wedge(F^{A'}+(g')^{-1}F^{A'}g') \right),
\end{equation}
in which $L=(g')^{-1}(dg'+[A',g'])$.  This identity can be proved by direct calculation.  The right hand side of the identity \eqref{equivariant degree} is an integral formula for the equivariant degree of the map $g':H^3\to SU(2)$ \cite{corkharlandwinyard:2021}.  Since $g'$ is constant on $S^2_\infty$, it extends to a map from the 1-point compactification $S^3$ of $H^3$ to $SU(2)$, and its equivariant degree is an integer.  It follows that $CS[A,g\cdot A]\in 8\pi^2\mathbb{Z}$ and that $g\in \tilde{G}^f_{A,\Phi}$, as claimed.

We now show that the map \eqref{Mb to Mf} is a bijection.  For surjectivity, suppose that $(A,\Phi)\in C_{n,p}$ represents a point in $M^f_{n,p}/8\pi^2\mathbb{Z}$.  We must find a $g\in \tilde{G}^f_{A,\Phi}$ such that $g\cdot(A,\Phi)\in C^b_{n,p}$.  The boundary conditions for hyperbolic monopoles imply that $(A-A^o)|_{S^2_\infty}=i\alpha T$ for some real 1-form $\alpha$.  By solving the Poisson equation $d\ast_{S^2_\infty} d\lambda = -d\ast_{S^2_\infty}\alpha$, we can find a $\lambda:S^2_\infty\to\R$ such that $\alpha+d\lambda$ is coclosed.  We can then choose an unframed gauge transformation $g$ such that $g|_{S^2_\infty}=\exp(i\lambda T)$.  For any $s\in\R$, the gauge transformation $g_s:=g\exp(s\Phi)$ will satisfy $(g_s\cdot A-A^o)|_{S^2_\infty}=i(\alpha+d\lambda)T$ so will belong to $C^b_{n,p}$.  By Lemmas \ref{lemma:chern simons additivity} and \ref{lemma:changing CS}, we can choose $s$ so that $g_s\in \tilde{G}^f_{A,\Phi}$.

For injectivity, suppose that $(A^1,\Phi^1),(A^2,\Phi^2)\in C^b_{n,p}$ map to the same point in $M^f_{n,p}/8\pi^2\mathbb{Z}$.  Then there exists $g\in \tilde{G}^f_{A^1,\Phi^1}$ such that $(A^2,\Phi^2)=g\cdot (A^1,\Phi^1)$.  We will show that $g\in \tilde{G}^b_n$ so that $(A^1,\Phi^1)$ and $(A^2,\Phi^2)$ determine the same point in $M^b_{n,p}/\mathbb{Z}_n$.  The gauge transformation $g$ is such that $g|_{S^2_\infty}=\exp(i\lambda T)$ for some function $\lambda:S^2_\infty\to \R$.  The connections $A^1,A^2$ satisfy $(A^i-A^o)|_{S^2_\infty}=i\alpha^iT$ for some coclosed 1-forms $\alpha^1,\alpha^2$.  Since $g A^1 g^{-1}-dg g^{-1}=A^2$, $\alpha^1-\alpha^2=d\lambda$.  Since $d\ast_{S^2_\infty}\alpha^i=0$, the function $\lambda$ is harmonic and hence constant.  So we can write $g=g'\exp(\lambda\Phi^1/p)$, where $g'\in G^b$.  By Lemmas \ref{lemma:chern simons additivity} and \ref{lemma:changing CS},
\begin{equation}
-8\pi n\lambda = CS[A^1,\exp(\tfrac{\lambda}{p}\Phi^1)\cdot A^1]=CS[A^1,g\cdot A^1]-CS[A',g'\cdot A'],
\end{equation}
where $A'=\exp(\frac{\lambda}{p}\Phi^1)\cdot A^1$.  By assumption, the first term on the right hand side belongs to $8\pi^2\mathbb{Z}$, while \eqref{equivariant degree} and the following discussion show that the second term on the right hand side also belongs to $8\pi^2\mathbb{Z}$.  Therefore $\lambda\in\frac{\pi}{n}\mathbb{Z}$.  Since $\Phi ^1 |_{S^2_{\infty}}=ipT$, $g\in \tilde{G}^b_n$ as claimed.

To show that the map is a diffeomorphism, we consider the induced map on tangent vectors.  Fix $(A,\Phi)\in C^b_{n,p}$ representing a point in $M^b_{n,p}$.  Recall that
\begin{equation}
T_{A,\Phi}M^b_{n,p}=T_{A,\Phi}C^b_{n,p}/\g^b_{A,\Phi}\quad\text{and}\quad
T_{A,\Phi}M^f_{n,p}=T_{A,\Phi}C_{n,p}/\g^f_{A,\Phi},
\end{equation}
where $\g^f_{A,\Phi}$ and $\g^b_{A,\Phi}$ are given in \eqref{framedinfinitesimalgt} and \eqref{boundary IGT}.  We must show that the natural map $T_{A,\Phi}M^b_{n,p}\to T_{A,\Phi}M^f_{n,p}$ is a bijection.

For surjectivity, let $(a,\phi)\in T_{A,\Phi}C_{n,p}$.  We must find  $\chi$ such that $(a-d^A\chi,\phi-[\Phi,\chi])\in T_{A,\Phi}C^b_{n,p}$ and $\chi$ is an infinitesimal framed gauge transformation, that is, $\int_{S^2_\infty}\tr(F^A\chi)=0$.  By writing $\chi|_{S^2_\infty}=i\lambda T$, this requirement is equivalent to solving the Poisson equation $d\ast_{S^2_\infty}d\lambda = d\ast_{S^2_\infty}\alpha$ and $\int_{S^2_\infty}\tr(F^AT)\lambda=0$ on $S^2_\infty$.  The Poisson equation admits a solution, and by adding a constant this solution can be made to satisfy the integral constraint.  It can be extended to an infinitesimal gauge transformation $\chi:H^3\to \su(2)$ with the required properties.

For injectivity, let $(a,\phi)\in T_{A,\Phi}C^b_{n,p}$ be such that $(a,\phi)=(d^A\chi,[\Phi,\chi])$ for some $\chi$ satisfying $\int_{S^2_\infty}\tr(F^A\chi)=0$.  We must show that $\chi|_{S^2_\infty}=0$.  By assumption, $\chi|_{S^2_\infty}=i\lambda T$ and $a|_{S^2_\infty}=id\lambda T$, where $d\ast_{S^2_\infty}d\lambda=0$.  So $\lambda$ is harmonic and therefore constant.  Since $\int_{S^2_\infty}\tr(F^AT)\lambda=0$ and $\int_{S^2_\infty}\tr(F^AT)\neq0$ this constant is zero.  So $\chi_{S^2_\infty}=0$ as required.
\end{proof}

Having introduced the framed moduli spaces $M^b_{n,p}$ and $M^\infty_{n,p}$, we now discuss complex structures.  The moduli space $M^\infty_{n,p}$ admits a natural complex structure that is defined as follows.  Recall that a hyperbolic monopole $x=(A,\Phi)$ gives a circle-invariant instanton $B=A+\Phi\,d\theta$ on $\R^4=S^4\setminus\{\infty\}$.  We identify $\R^4$ with $\C^2$ via the mapping $(\rho,\theta,x,y)\mapsto (\rho e^{i\theta},x+iy)$.  It can be shown that $T_xM^\infty_{n,p}$ is isomorphic to the space of circle-invariant $\su(2)$-valued 1-forms $b=a+i\phi\,d\theta$ satisfying
\begin{equation}\label{infinity framed tangent space}
\ast_E d^Bb = -d^Bb,\quad d^B\ast_E b=0.
\end{equation}
Here $\ast_E$ denotes the Hodge star operator for the euclidean metric.  The first equation is equivalent to the linearised Bogomolny equation \eqref{linearised bog_eq}, and the second is a gauge-fixing condition.  The complex structure on $M^\infty_{n,p}$ is defined by
\begin{equation}\label{infinity framed J}
J_\infty b = J_\infty(b^{1,0}+b^{0,1})=-ib^{1,0}+ib^{0,1}.
\end{equation}
This complex structure is inherited from a complex structure on the moduli space of framed hermitian-Yang-Mills connections on $\C^2$.  The circle group acts holomorphically on the instanton moduli space, and $M^\infty_{n,p}$ is the fixed set of the circle action.  Therefore $M^\infty_{n,p}$ is a complex submanifold and, in particular, $J_\infty$ is integrable.

We wish to compare this with the almost complex structures $J_{\langle\psi\rangle}$ defined on $M^f_{n,p}$.  We begin by writing both almost complex structures in local coordinates.  Starting with $M^\infty_{n,p}$, equations \eqref{infinity framed tangent space} take the form
\begin{align}\label{eigenspinor explicit}
(\partial^A_x+i\partial^A_y)(a_\rho+\tfrac{i}{\rho}\phi)-(\partial^A_\rho+\tfrac{i}{\rho}\mathrm{ad}\Phi)(a_x+{i}a_y)&=0,\\
\partial^A_x a_y - \partial^A_y a_x + \tfrac{1}{\rho}\partial^A_\rho \phi-\tfrac{1}{\rho}[\Phi,a_\rho]&=0,\\
\partial^A_x a_x + \partial^A_y a_y + \partial^A_\rho a_\rho+\tfrac{1}{\rho}a_\rho+\tfrac{1}{\rho^2}[\Phi,\phi]&=0.
\end{align}
The complex structure \eqref{infinity framed J} is such that
\begin{equation}\label{J explicit}
\begin{aligned}
((J_\infty a)_x+i(J_\infty a_y))&=i((J_\infty a)_x+i(J_\infty a_y)),\\
((J_\infty a)_\rho+i\rho (J_\infty\phi))&=i((J_\infty a)_\rho+i\rho (J_\infty\phi)).
\end{aligned}
\end{equation}

Turning to $M^f_{n,p}$, we assume that our tangent vector is written in the form $\cl_H(a+\phi)=\nu\otimes\psi+\bar{\nu}\otimes\bar{\psi}$ for some $\nu\in E_x^+$, where $\psi=\bar{\psi_1}\in K^+$ and $\psi_1$ is given in \eqref{explicit Killing spinors}.  Explicitly, we have that
\begin{equation}
\begin{aligned}
\cl_S(a,\phi) &= \begin{pmatrix}\phi- i\rho a_\rho &- i\rho(a_x-ia_y) \\ -i\rho(a_x+ia_y) & \phi+ i\rho a_\rho\end{pmatrix},\,\\
\psi^\ast&=\begin{pmatrix}\rho^{-\frac12}&0\end{pmatrix},\quad
\bar{\psi}^\ast=\begin{pmatrix}0&-\rho^{-\frac12}\end{pmatrix}.
\end{aligned}
\end{equation}
It follows that
\begin{equation}
\nu=\rho^{\frac12}\begin{pmatrix}\phi- i\rho a_\rho \\ -i\rho(a_x+ia_y)\end{pmatrix}.
\end{equation}
From \eqref{clifford isomorphism} and \eqref{Levi Civita connection}, the Dirac operator is given by
\begin{equation}
D^A -\ad(\Phi) = \begin{pmatrix}-i\rho\partial^A_\rho+i-\ad(\Phi) & -i\rho\partial_x^A-\partial_y^A \\ -i\rho\partial_x^A+\partial_y^A & i\rho\partial^A_\rho-i-\ad(\Phi) \end{pmatrix}.
\end{equation}
It can then be shown that the eigenspinor equation $D^A\nu-[\Phi,\nu]=\frac{i}{2}\nu$ is equivalent to \eqref{eigenspinor explicit}.  Moreover, the action \eqref{J definition} of the almost complex structure $J_{\langle\psi\rangle}$ agrees with \eqref{J explicit}.  So it is tempting to conclude that the almost complex structure $J_{\langle\psi\rangle}$ on $M^f_{n,p}$ agrees with the almost complex structure $J$ on $M^\infty_{n,p}$.

However, this is not straightforward to prove.  In order to compare the two almost complex structures, we use the maps
\begin{equation}\label{tangent space maps}
T_xM^f_{n,p}\to T_xM^b_{n,p}\to T_xM^\infty_{n,p}
\end{equation}
defined in Proposition \ref{prop:3 framings}.  Let $(a,\phi)$ be a solution of \eqref{eigenspinor explicit} representing a tangent vector to $M^f_{n,p}$.  To map this tangent vector to $T_xM^b_{n,p}$ we must put it in the correct gauge.  We find an infinitesimal gauge transformation $\chi^f\in \g^f_{A,\Phi}$ so that $(a+d^A\chi^f,\phi+[\Phi,\chi^f])$ restricts to a harmonic 1-form on $S^2_\infty$ and hence defines a tangent vector to $M^b_{n,p}$.  It then determines a tangent vector to $M^\infty_{n,p}$ via the natural map $T_xM^b_{n,p}\to T_xM^\infty_{n,p}$.  We then find a infinitesimal gauge transformation $\chi^\infty\in \g^\infty$ such that
\begin{equation}\label{f b infty}
(a+d^A(\chi^f+\chi^\infty),\phi+[\Phi,\chi^f+\chi^\infty])
\end{equation}
once again solves the gauge-fixing condition in \eqref{eigenspinor explicit}.

The 1-form and 0-form in equation \eqref{f b infty} are not the same as the $(a,\phi)$ that we started with.  They differ by the gauge transformation $\chi^f+\chi^\infty$.  If $\chi^f(\infty)\neq0$, this gauge transformation represents a generator of the circle action on $M^b_{n,p}$.  Then $J_\infty$ is not obviously the same as $J_{\langle\psi\rangle}$, because applying \eqref{J explicit} to \eqref{f b infty} is not the same as applying \eqref{J explicit} to $(a,\phi)$.

We can summarise the situation as follows.  Let us identify $T_xM^f_{n,p}$ and $T_xM^\infty_{n,p}$ using the maps \eqref{tangent space maps}.  Let $V$ denote a generator of the circle-action and let $X=(a,\phi)\in T_x M^f_{n,p}$.  We can write $d^A(\chi^f+\chi^\infty)=\theta(X)V$, where $\theta$ is a 1-form on $M^f_{n,p}$.  We have shown that
\begin{equation}
J_\infty X - J_{\langle\psi\rangle}X=\theta(X)J_{\infty}V-\theta(J_{\langle\psi\rangle}X)V.
\end{equation}
So $J_{\langle\psi\rangle}=J_\infty$ if and only if $\theta=0$, and the integrability of $J_{\langle\psi\rangle}$ will depend on properties of the 1-form $\theta$.  Since the definition of $\theta$ is not very explicit, it is not straightforward to show that $J_{\langle\psi\rangle}$ is integrable.
%

%

\newpage 
\bibliographystyle{amsplain}
\bibliography{biblio}

\end{document}